\newtheorem{theorem}{Theorem}[section]
\newtheorem{lemma}[theorem]{Lemma}
\newtheorem{corollary}[theorem]{Corollary}
\theoremstyle{definition}
\newtheorem{definition}{Definition}
\newtheorem*{notation}{Notation}
\newtheorem{remark}{Remark}
\def\PG{\mathrm{PG}}  
\def\V{\mathrm{V}}  
\def\Aut{\mathrm{Aut}}
\def\PGammaL{\mathrm{P}\Gamma\mathrm{L}}
\def\PGL{\mathrm{PGL}} 
\def\GL{\mathrm{GL}}
 \def\B{\mathcal{B}} 
\def\D{\mathcal{D}}  
 \def\K{\mathcal{K}}
\def\L{\mathcal{L}}  
 \def\P{\mathcal{P}}\def\Q{\mathcal{Q}}
\def\R{\mathcal{R}} \def\S{\mathcal{S}}
\def\FR{\mathcal{F}_{r,t,q}}
\def\K{\mathcal{K}}
\def\GammaL{\mathrm{\Gamma L}}
\def\F{\mathbb{F}}
\title{Field reduction and linear sets in finite geometry}
\author{ Michel Lavrauw \footnotemark[1] \and Geertrui Van de Voorde \thanks{This author is supported by the Fund for Scientific Research Flanders
(FWO -- Vlaanderen).}}
\begin{document}
\maketitle
\begin{abstract} 
Based on the simple and well understood concept of subfields in a finite field, the technique 
called `field reduction' has proved to be a very useful and powerful tool in finite geometry.
In this paper we elaborate on this technique. Field reduction for projective and polar spaces is
formalised and the links with Desarguesian spreads and linear sets are explained in detail. 
Recent results and some fundamental questions about linear sets and scattered spaces are studied.
The relevance of field reduction is illustrated by discussing applications
to blocking sets and semifields.

\end{abstract}

{\bf Keywords:} field reduction, Desarguesian spread, Segre variety, linear set, scattered spaces

\section{Introduction}\label{S1}

In the last two decades a technique, commonly referred to as `field reduction', has been used in 
many constructions and characterisations in finite geometry. This is somehow surprising since the
technique is based on the well known structure of subfields of a finite field. In this paper we will
elaborate on this technique and explain how such a simple idea gives rise to highly non-trivial 
constructions and characterisations of geometric and algebraic structures.

For projective spaces the idea goes back to the 1960's, when B. Segre introduced Desarguesian spreads arising from field reduction \cite{segre}. At the end of the 1990's, the link with {\em linear sets} was introduced which renewed the interest for this technique, because it turned out to be very useful in the construction and characterisation of different kinds of objects in finite geometry. Field reduction for polar spaces was also introduced in the 1990's, in the study of $m$-systems \cite{shult}. 

\begin{notation}
An $n$-dimensional vector space over the finite field with $q$ elements $\F_q$ is denoted by $\V(n,q)$ or $\F_q^n$. The $(n-1)$-dimensional projective space corresponding to $\V(n,q)$ is denoted by $\PG(n-1,q)$ or 
$\PG(\F_q^n)$. A point in $\PG(n-1,q)$ corresponding a nonzero vector $v=(x_0,\ldots,x_{n-1})$ of $\V(n,q)$ is denoted by $\F_q(x_0,\ldots,x_{n-1})$ or $\F_q v$, reflecting the fact that every $\F_q$-multiple of $v$ defines the same projective point in $\PG(n-1,q)$.
If $U$ is a subspace of $\F_q^n$, then we denote the corresponding projective subspace of $\PG(n-1,q)$ by 
$\PG(U)$.
\end{notation}

The paper is organised as follows: in Section \ref{S2}, we formalise field reduction for projective spaces and explain the connection with Desarguesian spreads and Segre varieties. In Section \ref{S3}, we explain the technique for classical polar spaces, embedded in projective spaces. In Section \ref{S4}, we turn our attention to linear sets and finally we discuss two topics in which linear sets and field reduction play an important role: blocking sets (Section \ref{S5}) and semifields (Section \ref{S6}).

\section{Field reduction for projective spaces}\label{S2}

The structure of subfields of a finite field is well understood and 
it is well-known that we can consider the finite field with $q^t$ elements $\F_{q^t}$ as a $t$-dimensional vector space over $\F_q$. A point $\F_{q^t}v$ of $\PG(r-1,q^t)$ is a 1-dimensional subspace of $\F_{q^t}^r$ and
consists of the set of vectors
$S_v:=\{\alpha v ~:~\alpha \in \F_{q^t}\}$.
Now consider $\F_{q^t}^r$ as a vector space $V$ over $\F_q$. This means that $V$ is defined as the set of
vectors of $\F_{q^t}^r$, addition is as in $\F_{q^t}^r$, and so is scalar multiplication but the field of scalars is 
$\F_q$ instead of $\F_{q^t}$. Observe that $V$ has dimension $rt$, 
and clearly the set $S_v$ forms a $t$-dimensional subspace of $V$. 
More generally, let $\pi$ be a $(k-1)$-dimensional subspace of $\PG(r-1,q^t)$, with $k\in \{0,1,\ldots,r-1\}$. Suppose $\pi=\PG(U)$ with $U=\langle u_1,\ldots , u_k\rangle$. The set of vectors belonging to $U$ is
$$
S_U=\{\alpha_1 u_1 + \ldots + \alpha_k u_k~:~\alpha_1, \ldots, \alpha_k \in \F_{q^t}\}.
$$
Then clearly the set $S_U$ forms a subspace of $V$ of dimension $kt$. Summarizing we have the following.
\begin{lemma}
Each $(k-1)$-dimensional subspace $\pi=\PG(U)$ of $\PG(r-1,q^t)$
corresponds to a $(kt-1)$-dimensional subspace $\K(\pi)$ of $\PG(V)\cong \PG(rt-1,q)$
defined by the $kt$-dimensional subspace of $V$ spanned by the vectors of $S_U$.
\end{lemma}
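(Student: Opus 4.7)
The plan is to produce the map $\K$ explicitly by verifying that $S_U$ is an $\F_q$-subspace of $V$ and by exhibiting an $\F_q$-basis of cardinality $kt$. The preceding discussion already observes that $S_U$ is closed under addition (it is even closed under $\F_{q^t}$-scalar multiplication as a subspace of $\F_{q^t}^r$) and, since $\F_q\subseteq \F_{q^t}$, closed under $\F_q$-scalar multiplication. Hence $S_U$ is automatically an $\F_q$-subspace of $V$, and only the dimension count remains.

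For the dimension count, I would fix an $\F_q$-basis $\{\beta_1,\ldots,\beta_t\}$ of $\F_{q^t}$ and claim that
\[
\B := \{\beta_j u_i : 1\le i\le k,\ 1\le j\le t\}
\]
is an $\F_q$-basis of $S_U$. Spanning is immediate: any $\sum_i \alpha_i u_i\in S_U$ with $\alpha_i\in\F_{q^t}$ can be rewritten, using $\alpha_i=\sum_j a_{ij}\beta_j$ with $a_{ij}\in\F_q$, as $\sum_{i,j} a_{ij}(\beta_j u_i)$.

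For linear independence, suppose $\sum_{i,j} a_{ij}(\beta_j u_i)=0$ with $a_{ij}\in\F_q$. Grouping by $i$ gives $\sum_i\bigl(\sum_j a_{ij}\beta_j\bigr)u_i=0$ in $\F_{q^t}^r$. Since $u_1,\ldots,u_k$ are $\F_{q^t}$-linearly independent (being a basis of $U$), each coefficient $\sum_j a_{ij}\beta_j$ vanishes; and since $\beta_1,\ldots,\beta_t$ are $\F_q$-linearly independent, every $a_{ij}$ is zero. Therefore $|\B|=kt$ and $\dim_{\F_q} S_U = kt$, so $S_U$ determines a $(kt-1)$-dimensional subspace $\K(\pi)$ of $\PG(V)\cong\PG(rt-1,q)$.

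There is no real obstacle here; the only point requiring care is the independence step, where it is important to invoke the $\F_{q^t}$-independence of the $u_i$ before the $\F_q$-independence of the $\beta_j$. One should also note in passing that $\K(\pi)$ does not depend on the choice of basis $\{u_1,\ldots,u_k\}$ of $U$, since $S_U$ is defined directly from $U$ as the underlying set of vectors, making the assignment $\pi\mapsto\K(\pi)$ well-defined.
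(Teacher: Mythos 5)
Your proof is correct and fills in exactly the argument the paper leaves implicit: the paper simply asserts that ``clearly the set $S_U$ forms a subspace of $V$ of dimension $kt$,'' and your basis $\{\beta_j u_i\}$ with the two-step independence argument (first $\F_{q^t}$-independence of the $u_i$, then $\F_q$-independence of the $\beta_j$) is the standard justification of that claim. The remark on well-definedness of $\K(\pi)$ independent of the chosen basis of $U$ is a sensible addition, also tacit in the paper.
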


This is the idea behind {\it field reduction}. We formalise this idea introducing the 
{\em field reduction map} $\FR$, defined as a map from the subspaces of $\PG(r-1,q^t)$ to
the subspaces of $\PG(rt-1,q)$:
\begin{eqnarray}\label{def:field_reduction_map}
\FR: \PG(r-1,q^t) \rightarrow \PG(rt-1,q)~:~\pi\mapsto \K(\pi),
\end{eqnarray}
where $\K(\pi)$ is as in the above Lemma.
We collect the properties of the field reduction map in the following lemma. The proof easily follows from the definitions but we include a proof to get used to the notation.

\begin{lemma}\label{lemma:field_reduction_map} Let $\mathcal P$ denote the set of points of $\PG(r-1,q^t)$, and consider $\FR$ as defined in
(\ref{def:field_reduction_map}).\\
(i) The field reduction map $\FR$ is injective.\\
(ii) If $\pi$ is a $(k-1)$-dimensional subspace of $\PG(r-1,q^t)$, then $\FR(\pi)$ has dimension $kt-1$, so
each subspace contained in the image of $\FR$ has dimension $kt-1$ for some $k\in\{0,1,\ldots,r-1\}$.\\ 
(iii) Any two distinct elements of $\FR(\mathcal{P})$ are disjoint.\\ 
(iv) Each point in $\PG(rt-1,q)$ is contained in an element of $\FR(\mathcal{P})$.\\
(v) $|\FR(\mathcal{P})|=(q^{rt}-1)/(q^t-1)$.\\
(vi) The intersection of elements in the image of $\FR$ also belongs to the image of $\FR$.
(vii) The span of elements in the image of $\FR$ is either the trivial subspace or can be written as the span of elements of $\FR(\mathcal{P})$.
\end{lemma}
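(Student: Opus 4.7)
The guiding observation is that for any $\F_{q^t}$-subspace $U$ of $\F_{q^t}^r$, the set $S_U$ is literally the set of vectors in $U$, and since $U$ is closed under $\F_{q^t}$-scalar multiplication it is \emph{a fortiori} closed under $\F_q$-scalar multiplication and vector addition. Consequently $\FR(\PG(U))$, defined as the $\F_q$-span of $S_U$, is simply $U$ itself regarded as an $\F_q$-subspace of $V$. The plan is to establish this identification up front and then read off (i)--(v) as essentially one-line observations, while (vi)--(vii) require only the additional remark that set-theoretic intersections and sums of $\F_{q^t}$-subspaces remain $\F_{q^t}$-closed.

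For parts (i)--(v) the argument runs as follows. Injectivity (i) is immediate: $\FR(\pi_1)=\FR(\pi_2)$ forces $U_1=U_2$ as sets, hence as $\F_{q^t}$-subspaces, so $\pi_1=\pi_2$. The dimension formula (ii) follows from $\dim_{\F_q}U = t\cdot\dim_{\F_{q^t}}U$. For (iii), given distinct points $\F_{q^t}v$ and $\F_{q^t}w$, a common nonzero vector $\alpha v=\beta w$ would force $w\in \F_{q^t}v$, contradicting distinctness. Statement (iv) simply notes that every nonzero $v\in V$ lies in $\FR(\F_{q^t}v)$, and (v) combines (i) with the standard count $|\mathcal P|=(q^{rt}-1)/(q^t-1)$.

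The slightly more substantive parts are (vi) and (vii). For (vi), the identification yields $\FR(\pi_1)\cap \FR(\pi_2)=U_1\cap U_2$; since the intersection of two $\F_{q^t}$-subspaces is again $\F_{q^t}$-closed and corresponds to the projective intersection $\pi_1\cap\pi_2$ in $\PG(r-1,q^t)$, the right-hand side equals $\FR(\pi_1\cap\pi_2)$. For (vii), the key point is that the $\F_q$-span of $U_1\cup\cdots\cup U_m$ is automatically $\F_{q^t}$-closed (each $U_i$ is), and hence coincides with the $\F_{q^t}$-subspace $U_1+\cdots+U_m$. Therefore the span of $\FR(\pi_1),\ldots,\FR(\pi_m)$ equals $\FR(\langle \pi_1,\ldots,\pi_m\rangle)$, and since every projective subspace over $\F_{q^t}$ is spanned by its points, this expression can be rewritten as the $\F_q$-span of elements of $\FR(\mathcal P)$, as required.

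There is no serious obstacle: the crux is the identification $\FR(\PG(U))=U$, after which each item is a short verification. The only point that needs a moment of care is the observation underlying (vi) and (vii), namely that intersections and $\F_q$-spans of a family of $\F_{q^t}$-subspaces are again $\F_{q^t}$-closed and therefore lie in the image of $\FR$.
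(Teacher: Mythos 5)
Your proposal is correct and follows essentially the same route as the paper: both rest on the observation that $S_U$ is just $U$ itself, already closed under $\F_q$-scalars, so each item reduces to a short verification, with (vi) and (vii) handled by noting that intersections and $\F_q$-spans of $\F_{q^t}$-subspaces remain $\F_{q^t}$-closed. Your up-front identification $\FR(\PG(U))=U$ merely makes explicit what the paper's terser proof leaves implicit.
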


\begin{proof}
(i) Suppose that $\FR(\pi_1)=\FR(\pi_2)$, with $\pi_1=\PG(U_1)$ and $\pi_2=\PG(U_2)$, then $S_{U_1}=S_{U_2}$, which implies that $U_1=U_2$ and $\pi_1=\pi_2$.

(ii) Every $S_U$ contains $q^{kt}$ vectors forming a vector space of dimension $kt$ over $\F_q$.

(iii) Suppose that $\FR(P_1)$ and $\FR(P_2)$, where $P_1=\F_{q^t}v$ and $P_2=\F_{q^t}w$ have a point in common, then $\alpha v=\beta w$, which implies that $S_{v}=S_w$, hence $\FR(P_1)=\FR(P_2)$.

(iv) Let $P$ be a point in $\PG(rt-1,q)$, say $P=\F_q w$, then $P$ belongs to $\FR(\F_{q^t}w)$.

(v) This follows from (i) and (iii).

(vi) The intersection of $\FR(\pi_1)$ and $\FR(\pi_2)$ is clearly equal to $\FR(\pi_1\cap \pi_2)$.

(vii) If $\pi_1=\langle P_1,\ldots,P_l\rangle$ and $\pi_2=\langle P_{l+1},\ldots,P_s\rangle$, then 
$$\langle \FR(\pi_1),\FR(\pi_2)\rangle=\langle \FR(P_1),\ldots,\FR(P_s)\rangle.$$ \end{proof}

\subsection{Desarguesian spreads}

A \index{(t-1)-spread}\index{Spread}{\em $(t-1)$-spread} in $\PG(n-1,q)$ is a set of $(t-1)$-spaces, partitioning the set of points in $\PG(n-1,q)$. Two spreads $\S_1$ and $\S_2$ in $\PG(n-1,q)$ are {\it equivalent} if there exists a collineation of $\PG(n-1,q)$ mapping one to the other.
The following theorem of Segre gives a necessary and sufficient condition for the existence of a $(t-1)$-spread in $\PG(n-1,q)$. We include a proof using the field reduction map.

\begin{theorem}{ \cite{segre}} There exists a $(t-1)$-spread in $\PG(n-1,q)$ if and only if $t$ divides $n$. \end{theorem}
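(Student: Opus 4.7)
My plan is to prove the two directions separately, with the reverse direction being essentially a direct application of the field reduction map and Lemma \ref{lemma:field_reduction_map}.

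For the necessity ($t \mid n$), I would use a counting argument. If $\S$ is a $(t-1)$-spread of $\PG(n-1,q)$, then since the elements of $\S$ are pairwise disjoint and cover every point, summing the point counts yields
\[
|\S| \cdot \frac{q^t - 1}{q - 1} = \frac{q^n - 1}{q - 1},
\]
so $(q^t - 1) \mid (q^n - 1)$. A standard fact about cyclotomic-type divisibility gives $q^t - 1 \mid q^n - 1$ if and only if $t \mid n$, which finishes this direction. I would mention this standard lemma without proof or derive it in one line by writing $n = at + b$ with $0 \le b < t$ and reducing modulo $q^t - 1$.

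For sufficiency, suppose $n = rt$. I would take the set $\mathcal{P}$ of points of $\PG(r-1, q^t)$ and form $\FR(\mathcal{P})$, which by Lemma \ref{lemma:field_reduction_map}(ii) is a collection of $(t-1)$-dimensional subspaces of $\PG(rt-1, q) = \PG(n-1, q)$. Part (iii) of the lemma says distinct elements of $\FR(\mathcal{P})$ are pairwise disjoint, and part (iv) says every point of $\PG(rt-1, q)$ lies in some element of $\FR(\mathcal{P})$. Together these exactly say that $\FR(\mathcal{P})$ is a $(t-1)$-spread of $\PG(n-1, q)$.

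There is no real obstacle here — both directions are short. The only minor care needed is the elementary number-theoretic fact $q^t - 1 \mid q^n - 1 \iff t \mid n$, which can be dispatched in one line; the rest is bookkeeping with the field reduction map already set up in Lemma \ref{lemma:field_reduction_map}. The spread produced by this construction is exactly the \emph{Desarguesian spread}, which motivates the subsection's title and sets up the subsequent discussion.
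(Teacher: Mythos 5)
Your proof is correct and follows essentially the same route as the paper: the necessity via counting points (the paper likewise observes that the number of points of a $(t-1)$-space must divide the number of points of $\PG(n-1,q)$, relying on the same divisibility fact $q^t-1 \mid q^n-1 \iff t\mid n$), and the sufficiency by taking $\FR(\mathcal{P})$ and invoking parts (ii), (iii) and (iv) of Lemma \ref{lemma:field_reduction_map}. No gaps; your extra explicitness about the cyclotomic divisibility is a minor elaboration, not a different argument.
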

\begin{proof}
If there exists a $(t-1)$-spread in $\PG(n-1,q)$, it is clear that the number of points in a $(t-1)$-space has to divide the number of points in $\PG(n-1,q)$. From this, it follows that $t$ has to divide $n$.
Conversely, suppose $n=rt$. Put 
\begin{eqnarray}\label{def:desarguesian_spread}
\D_{r,t,q}:=\FR(\mathcal{P})
\end{eqnarray}
where $\FR$ is defined as in (\ref{def:field_reduction_map}) and $\mathcal P$ denotes the set of points of
$\PG(r-1,q^t)$.
Then (ii), (iii) and (iv) of Lemma \ref{lemma:field_reduction_map} imply that $\D_{r,t,q}$ is a $(t-1)$-spread of $\PG(rt-1,q)$.
\end{proof}

A spread $\S$ in $\PG(n-1,q)$ is called {\it Desarguesian} if there exist natural numbers $r$ and $t$
such that $n=rt$ and $\S$ is equivalent to $\D_{r,t,q}$.

\begin{remark} By \cite{segre} a $(t-1)$-spread in $\PG(n-1,q)$, where $t$ is a divisor of $n$, can be also constructed as follows. Embed $\PG(rt-1, q)$ as a subgeometry of $\PG(rt-1, q^t)$ in the canonical way, i.e. by restricting the coordinates to $\F_q$. Let $\sigma$ be the automorphic collineation of $\PG(rt-1,q^t)$ induced by the field automorphism $x\rightarrow x^q$ of $\F_{q^t}$, i.e., $\sigma: \F_{q^t} (x_0,x_1,\ldots,x_{rt-1}) \mapsto \F_{q^t} (x_0^q,x_1^q,\ldots,x_{rt-1}^q)$. Then $\sigma$ fixes $\PG(rt-1,q)$ pointwise and one can prove that a subspace of $\PG(rt-1, q^t)$ of dimension $d$ is fixed by $\sigma$  if and only if it intersects the subgeometry $\PG(rt-1, q)$ in a subspace of dimension $d$ and that there exists an $(r - 1)$-space $\pi$ skew to the subgeometry $\PG(rt-1, q)$ (see \cite{28}). Let $P$ be a point of $\pi$ and let $L(P)$ denote the $(t-1)$-dimensional subspace generated by the conjugates of $P$, i.e., $L(P) = \langle P,P^\sigma,\ldots,P^{\sigma^{t-1}}\rangle$. Then $L(P)$ is fixed by $\sigma$ and hence it intersects $\PG(rt-1,q)$ in a $(t-1)$-dimensional subspace over $\F_q$. Repeating this for every point of $\pi$, one obtains a set $\mathcal{S}$ of $(t-1)$-spaces of the subgeometry $\PG(rt-1, q)$ forming a spread. This spread is equivalent to $\D_{r,t,q}$.
\end{remark}

A \index{Regulus}{\em regulus} in a projective space, or {\em $(t-1)$-regulus}  if we want to specify the dimension of the elements, is a set $\mathcal{R}$ of $q+1$ two by two disjoint $(t-1)$-spaces with the property that each line meeting three elements of $\mathcal{R}$ meets all elements of $\mathcal{R}$. If $S_1,S_2,S_3$ are mutually disjoint $(t-1)$-subspaces with $\dim\langle S_1,S_2,S_3\rangle=2t-1$, then there is a unique regulus $\R(S_1,S_2,S_3)$ containing $S_1,S_2,S_3$. A spread $\S$ is called \index{Regular spread}{\em regular} if the regulus $\R(S_1,S_2,S_3)$ is contained in $\S$ for each three different elements $S_1,S_2,S_3$ of $\S$.
We note that, if $q>2$, a $(t-1)$-spread of $\PG(2t-1,q)$ is Desarguesian if and only if it is regular \cite{Br}. 

Note that a Desarguesian spread satisfies the property that each subspace spanned by spread elements is partitioned by spread elements (Lemma \ref{lemma:field_reduction_map} (vii)). Spreads satisfying this property are called \index{Normal spread}{\em normal} or {\em geometric}. Clearly, a $(t-1)$-spread in $\PG(2t-1,q)$ is always normal. A $(t-1)$-spread $\S$ in $\PG(rt-1,q)$, with $r>2$, is normal if and only if $\S$ is Desarguesian \cite{Barlotti}. 
For a survey and self-contained proofs of these characterisations of Desarguesian spreads, we refer to \cite{Bader}.

To explain why the spread $\D_{r,t,q}$  is called  `Desarguesian', we need to consider the following incidence structure constructed from a spread.
Let $\S$ be a $(t-1)$-spread in $\PG(rt-1,q)$. Embed $\PG(rt-1,q)$ as a hyperplane $H$ in $\PG(rt,q)$. Consider the following incidence structure ${\mathcal A}(\S)=(\P,\mathcal{L},\mathrm{I})$, where $\mathrm{I}$ is symmetric containment:
\begin{itemize}
\item[$\mathcal{P}$:] points of $\PG(rt,q)\setminus H$;
\item[$\mathcal{L}$:] $t$-spaces of $\PG(rt,q)$ intersecting $H$ exactly in an element of $\S$.
\end{itemize}
Then the incidence structure ${\mathcal{A}}(\S)$ is a $2-(q^{rt},q^t,1)$-design with parallelism \cite{Barlotti}.
These are the same parameters as the parameters of the design obtained from points and lines of an
affine space ${\mathrm{AG}}(r,q^t)$.
If $r=2$, then the ${\mathcal{A}}(\S)$ is an affine translation plane of order $q^t$, and in this case this
construction is known as the \index{Andr\'e/Bruck-Bose construction} {\em Andr\'e/Bruck-Bose construction}.
The spread $\D_{r,t,q}$ obtained via field reduction is called Desarguesian because the incidence structure
${\mathcal{A}}(\D_{r,t,q})$ is isomorphic to the design obtained from the points and lines of an affine space
${\mathrm{AG}}(r,q^t)$. This means that for $r=2$, the projective completion of the affine 
plane ${\mathcal{A}}(\S)$ is a Desarguesian projective plane $\cong \PG(2,q^t)$ if and only if the
the spread $\S$ is a Desarguesian spread.

\bigskip

Since every linear transformation of $\V(r,q^t)$ can be considered as a linear transformation of $\V(rt,q)$, we have that $\GL(r,q^t)\leq \GL(rt,q)$ (see e.g. \cite[p. 139]{Kleidman}). 

The group of all semilinear transformations of the vector space $\V(r,q^t)$ is denoted by $\Gamma L(r,q^t)$. We show that $\GammaL(r,q^t)$ can be embedded in $\GammaL(rt,q)$. Any $\sigma \in \Aut(\F_{q^t})$ can be uniquely written as $\tau \circ \rho$, where $\tau$ is an element of $\Aut(\F_{q^t})$, fixing $\F_q$ pointwise and $\rho$ is an element of $\Aut(\F_q)$. Now $\tau$ induces is an $\F_q$-linear map of $\V(r,q^t)$, so, as seen before, $\tau$ can be naturally embedded into $\GL(rt,q)$.
Hence, if $A$ is an element of $\GL(r,q^t)$ (hence of $\GL(rt,q)$), then an element $\phi$ of $\GammaL(r,q^t)$ can be written as $A\circ \sigma=A \circ (\tau \circ \rho)=(A \circ \tau)\circ \rho \in \GammaL(rt,q)$. It is clear that two different elements of $\GammaL(r,q^t)$ correspond to different elements of $\GammaL(rt,q)$, so this procedure provides an embedding.

\subsection{The Segre variety}\label{segre}
In this section we explain the connection between subgeometries and the Segre variety using field reduction.
Let us first recall the difference between a subspace and a subgeometry. A $k$-dimensional {\em subspace} $U$ of $\PG(n,q)$, also called a {\em $k$-space}, is isomorphic to a projective space $\PG(k,q)$.
A subgeometry $B$ on the other hand is isomorphic to a projective space $\PG(k,q_0)$ 
for some subfield $\F_{q_0}$ of $\F_q$. We define a {\em subgeometry} $B$ by the set of points of a projective space $\PG(k,q)$ whose coordinates with respect to some fixed frame take values from a subfield 
$\F_{q_0}$ of $\F_q$.
In this case the subspaces of $B$ correspond to the
intersections of subspaces of $\PG(n,q)$ with $B$. We also say that $B$ is a subgeometry {\em over $\F_{q_0}$} or
{\em of order $q_0$}.
For instance, for $k=n$, we take in a projective space $\PG(n,q)$ the set of points $B$ that have coordinates in a subfield $\F_{q_0}$ of $\F_q$, together with all the intersections of subspaces of $\PG(n,q)$ with $B$. In this way we obtain a subgeometry over $\F_{q_0}$ ({\em canonical} with respect to the frame to which these coordinates are defined). This subgeometry is isomorphic to a projective space $\PG(n,q_0)$. If $q=q_0^2$, then 
$B$ is usually called a {\it Baer subgeometry}.

We have seen in the previous subsection that applying the field reduction map
$\FR$ to all points of a projective space yields a Desarguesian spread $\D_{r,t,q}$. 
If we apply the field reduction map $\FR$ to all points of a subgeometry $\PG(r-1,q)$ of $\PG(r-1,q^t)$, then we obtain a subset of $\D_{r,t,q}$ that forms one of the systems of a {\em Segre variety} $\mathcal{S}_{r-1,t-1}$. We will provide a proof here to give an explicit example of how field reduction works.

\begin{definition}
The {\em Segre map} $\sigma_{l,k}: \PG(l,q) \times \PG(k,q) \rightarrow \PG((l+1)(k+1)-1,q)$ is defined by
\[\sigma_{l,k}(\F_q(x_0, \ldots, x_l), \F_q(y_0, \ldots , y_k)) := \F_q(x_0 y_0, \ldots, x_0 y_k, \ldots , x_l y_0, \ldots , x_l y_k).\]
The image of the Segre map $\sigma_{l,k}$ is called the {\em Segre variety} $\S_{l,k}$. 
\end{definition}

If we give the points of $\PG((l+1)(k+1)-1,q)$ coordinates in the form $$\F_q(x_{00},x_{01},\ldots,x_{0k};x_{10},\ldots,x_{1k};\ldots;x_{l0},\ldots,x_{lk}),$$
 then it is clear that the points of the Segre variety $\S_{l,k}$ are exactly the points that have coordinates such that the matrix $(x_{ij})$, $0\leq i\leq l$, $0 \leq j \leq k$, has rank $1$ (see also \cite[Theorem 25.5.7]{GGG}).

By fixing a point in $\PG(l,q)$ and varying the point of $\PG(k,q)$, we obtain a $k$-dimensional space on $\S_{l,k}$.
For every point of $\PG(l,q)$ such a space exists, and the set of these subspaces, which are clearly disjoint, is called a {\em system} ({\em of maximal subspaces}). Similarly, by fixing a point in $\PG(k,q)$, we obtain an $l$-dimensional space on $\S_{l,k}$ by varying the point of $\PG(l,q)$; the set of these subspaces is again called a system (of maximal subspaces). Subspaces of different systems intersect each other in exactly one point, while subspaces within the same system intersect each other trivially. Moreover, each subspace lying on the variety $\S_{l,k}$ is contained in an element of one of these two systems.

Let $P$ be a point of $\PG(r-1,q^t)$, say $P=\F_{q^t}v$, for some nonzero vector $v=(X_0,\ldots,X_{r-1})$, $X_i\in \F_{q^t}$, so $P$ corresponds to the vector line containing the vectors with coordinates 
$(\lambda_jX_0,\lambda_jX_1,\ldots,\lambda_jX_{r-1})$, where $X_i,i=0,\ldots,r-1$ 
are fixed elements  of $\F_{q^t}$ and $\lambda_j$, $j=0,\ldots,q^t-1$ ranges over $\F_{q^t}$.

Now we show that a subgeometry $\Sigma \cong\PG(k-1,q)$ of $\PG(r-1,q^t)$ corresponds to one of the systems of
a Segre variety $\S_{k-1,t-1}$ contained in the Segre variety $\S_{r-1,t-1}$. 

\begin{theorem}\label{segrevariety}
If $\P_\Sigma$ is the set of points of a subgeometry $\Sigma \cong \PG(k-1,q)$ of $\PG(r-1,q^t)$ of order $q$, then $\FR(\P_\Sigma)$ is projectively equivalent the system of $(t-1)$-spaces of a 
Segre variety $\S_{k-1,t-1}$ contained in the Segre variety $\S_{r-1,t-1}$.
\end{theorem}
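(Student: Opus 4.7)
The plan is to show that, in suitable coordinates, $\FR(\P_\Sigma)$ is literally equal to one of the two systems of $(t-1)$-dimensional maximal subspaces of a concrete copy of $\S_{k-1,t-1}$ contained in $\S_{r-1,t-1}$. Projective equivalence enters because any two subgeometries of $\PG(r-1,q^t)$ of the same order and dimension are projectively equivalent in $\PG(r-1,q^t)$, and any collineation $\phi$ there lifts, via the embedding $\GammaL(r,q^t)\hookrightarrow\GammaL(rt,q)$ recalled earlier, to a collineation $\hat\phi$ of $\PG(rt-1,q)$ satisfying $\hat\phi\circ\FR=\FR\circ\phi$. Hence I may assume that $\Sigma$ is the canonical subgeometry consisting of the points $\F_{q^t}(a_0,\ldots,a_{k-1},0,\ldots,0)$ with $(a_0,\ldots,a_{k-1})\in\F_q^k\setminus\{0\}$.

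The heart of the argument is a direct coordinate computation. Fix an $\F_q$-basis $\{e_0,\ldots,e_{t-1}\}$ of $\F_{q^t}$ and identify $\F_{q^t}^r$ with the $\F_q$-space of $r\times t$ matrices $(x_{ij})$ by expanding each coordinate as $X_i=\sum_j x_{ij}e_j$. For $P=\F_{q^t}v$ in the canonical $\Sigma$ with $v=(a_0,\ldots,a_{k-1},0,\ldots,0)$ and $a_i\in\F_q$, the subspace $S_v$ is the $\F_q$-span of $e_0v,\ldots,e_{t-1}v$, and the matrix representing $e_jv$ has $a_i$ in position $(i,j)$ for $i<k$ and zeros elsewhere. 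Taking $\F_q$-linear combinations then identifies $S_v$ with the $t$-dimensional $\F_q$-space of matrices $(x_{ij})$ satisfying $x_{ij}=a_i\lambda_j$ for $i<k$ and $x_{ij}=0$ for $i\ge k$, as $\lambda=(\lambda_0,\ldots,\lambda_{t-1})$ ranges over $\F_q^t$.

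These are precisely the rank-$\le 1$ matrices of the outer-product form $a\otimes\lambda$ supported in the first $k$ rows. Inside the coordinate subspace $W\subset\PG(rt-1,q)$ defined by $x_{ij}=0$ for $i\ge k$, the set of projective points $\F_q(a_i\lambda_j)$ with $\F_q a\in\PG(k-1,q)$ and $\F_q\lambda\in\PG(t-1,q)$ is exactly the Segre variety $\S_{k-1,t-1}$ (by the rank-one characterisation), and its inclusion in $\S_{r-1,t-1}$ is immediate from the coordinate embedding. Fixing $\F_q a$ while varying $\F_q\lambda$ recovers the $(t-1)$-dimensional maximal subspace of $\S_{k-1,t-1}$ indexed by $\F_q a$; as $P$ runs through $\P_\Sigma$ the first factor $\F_q a$ runs through $\PG(k-1,q)$, producing exactly the system of $(q^k-1)/(q-1)$ maximal subspaces, as required. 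The only genuine obstacle is the bookkeeping in identifying the $\F_q$-span of $\{e_jv\}$ with the outer-product matrix $(a_i\lambda_j)$; once this identification is in place the Segre structure reads off directly from the rank-one description given before the theorem.
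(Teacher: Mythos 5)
Your proof is correct and follows essentially the same route as the paper's: reduce to the canonical subgeometry, expand the coordinates of $\alpha v$ with respect to an $\F_q$-basis of $\F_{q^t}$, and recognise the resulting matrices $(a_i\lambda_j)$ as rank-one outer products, so that each $\FR(P)$ is a maximal $(t-1)$-space of a Segre variety $\S_{k-1,t-1}$ inside $\S_{r-1,t-1}$. If anything, you are slightly more explicit than the paper about why the canonical case suffices (the lift of $\GammaL(r,q^t)$ into $\GammaL(rt,q)$ commuting with $\FR$) and about why the image is the \emph{entire} system of $(q^k-1)/(q-1)$ maximal subspaces rather than merely a set of subspaces lying on the variety.
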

\begin{proof}
We give a proof for $k=r$, the proof for $k<r$ is easily obtained by replacing $r-k$ coordinates by zero's.

Let $\omega$ be a primitive element of $\F_{q^t}$, and consider the $\F_q$-basis $B=\{1,\omega,\omega^2,\ldots,
\omega^{t-1}\}$ for $\F_{q^t}$. For every $\lambda_j$ in $\F_{q^t}$, the element $\lambda_j X_i$ can be expressed in a unique way in terms of this basis, say $\lambda_j X_i=\sum_{s}x_{is}^j \omega^s$.

This implies that $\FR(P)$ is the $(t-1)$-dimensional projective space corresponding to the vector space  $S_v$ that consists of all vectors $$v^j:=\F_q\left(x_{00}^j,\ldots,x_{0(t-1)}^j; x_{10}^j,\ldots,x_{1(t-1)}^j; \ldots;x_{(r-1)0}^j,\ldots,x_{(r-1)(t-1)}^j\right),$$ with $j$ in $\{0,\ldots,q^{t}-1\}$.
Assume, without loss of generality, that $\Sigma$ is canonically embedded in $\PG(r-1,q^t)$ with respect to some
fixed frame.
It follows from above that applying the field reduction map $\FR$ to the 
point $P$ of $\Sigma$ with coordinates $\F_{q^t}(X_0,\ldots,X_r)$, with $X_i\in \F_q$,
gives the $(t-1)$-space of $\PG(rt-1,q)$ spanned by the points 
$$\F_q(X_0,0,\ldots,0;X_1,0,\ldots,0;\ldots;X_{r-1},0,\ldots,0),$$ 
$$ \F_q(0,X_0,\ldots,0;0,X_1,\ldots,0;\ldots;0,X_{r-1},\ldots,0),$$
$$\F_q(0,\ldots0,X_0;0,\ldots,0,X_1;\ldots;0,\ldots,0,X_{r-1}).$$
Hence $\FR(P)$ contains 
the points with coordinates $$\F_q(\mu_0X_0,\mu_1X_0,\ldots,\mu_{t-1}X_0;\mu_0X_1,\ldots,\mu_{t-1}X_1;\ldots;\mu_0X_{r-1},\ldots,\mu_{t-1}X_{r-1}),$$ 
$\mu_0,\ldots, \mu_{t-1}\in \F_q$. Since the matrix $(x_{ij})$ with $x_{ij}=\mu_iX_j$, 
corresponding to these coordinates has rank $1$, the points of $\FR(P)$ lie on the 
Segre variety $\S_{r-1,t-1}$. 
\end{proof}



\begin{corollary} 
The system of $(t-1)$-spaces of a Segre variety $\S_{k-1,t-1}$ in $\PG(rt-1,q)$, $k\leq r$, is projectively equivalent to a subset of $\D_{r,t,q}$, whereas the system of $(r-1)$-spaces of a Segre variety $\S_{r-1,u-1}$ in $\PG(rt-1,q)$, $u\leq t$, is projectively equivalent to a subset of $\D_{t,r,q}$.
\end{corollary}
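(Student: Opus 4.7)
The first assertion is essentially immediate from Theorem \ref{segrevariety}. Indeed, $\D_{r,t,q}=\FR(\mathcal{P})$ by definition, and any subgeometry $\Sigma\cong\PG(k-1,q)$ of $\PG(r-1,q^t)$ satisfies $\P_\Sigma\subseteq\mathcal{P}$. The theorem therefore identifies the system of $(t-1)$-spaces of $\S_{k-1,t-1}\subseteq\S_{r-1,t-1}$ with the subset $\FR(\P_\Sigma)$ of $\D_{r,t,q}$, up to a projectivity. So the first step of the proof is just to record this chain of inclusions.

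For the second assertion the plan is to reduce to the first by invoking the symmetry of the Segre variety in its two factors. The map $\sigma_{l,k}$ and $\sigma_{k,l}$ differ only by the permutation of the coordinate indices $(ij)\mapsto(ji)$ of $\PG((l+1)(k+1)-1,q)$, which is a projectivity interchanging the two systems of maximal subspaces. Hence the system of $(r-1)$-spaces of $\S_{r-1,u-1}$ in $\PG(rt-1,q)$ is projectively equivalent to the system of $(r-1)$-spaces of $\S_{u-1,r-1}$, which under that same permutation is the system obtained by fixing a point in the first factor.

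Now I would apply Theorem \ref{segrevariety} with the roles of $r$ and $t$ swapped: take $r'=t$, $t'=r$ and $k'=u$. In this dual set-up, the field reduction map is $\FR'\colon\PG(t-1,q^r)\to\PG(rt-1,q)$, its image on all points is the Desarguesian spread $\D_{t,r,q}$, and the theorem, applied to a subgeometry $\Sigma'\cong\PG(u-1,q)$ of $\PG(t-1,q^r)$, identifies $\FR'(\P_{\Sigma'})\subseteq\D_{t,r,q}$ with the system of $(r-1)$-spaces of $\S_{u-1,r-1}\subseteq\S_{t-1,r-1}$. Composing this with the projective equivalence $\S_{r-1,u-1}\leftrightarrow\S_{u-1,r-1}$ gives the statement.

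The only real obstacle is notational: one has to keep the two factors of the Segre variety straight and make sure that after swapping $r\leftrightarrow t$ the system produced by Theorem \ref{segrevariety} is indeed the $(r-1)$-system of $\S_{r-1,u-1}$ (and not its $(u-1)$-system). Once the symmetry $\S_{l,k}\cong\S_{k,l}$ is made explicit at the level of the Segre map, this is automatic and no further computation is needed.
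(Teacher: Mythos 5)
Your proposal is correct and matches the intended argument: the paper gives no proof for this corollary, treating it as an immediate consequence of Theorem \ref{segrevariety} together with the symmetry $\S_{l,k}\cong\S_{k,l}$ and the observation that field reduction from $\PG(t-1,q^r)$ yields $\D_{t,r,q}$ in the same $\PG(rt-1,q)$. Your handling of the $r\leftrightarrow t$ swap and of which system of maximal subspaces is produced is exactly the right bookkeeping.
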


\section{Field reduction for classical polar spaces} \label{S3}
In this section we elaborate on the concept of field reduction for classical polar spaces; starting from a classical
polar space in $\PG(r-1,q^t)$ we want to obtain a classical polar space in $\PG(rt-1,q)$.
We will see that field reduction for classical polar spaces is somewhat more involved than field reduction for projective spaces. The reason is the extra freedom that arises from the choice of the form that is used to obtain a polar space in $\PG(rt-1,q)$; different forms can give different types of polar spaces in 
$\PG(rt-1,q)$.

Polar spaces are incidence structures that can be defined axiomatically, see \cite{Veldkamp}, but here we only need the so-called {\it classical polar spaces}, i.e. polar spaces that are embedded in a projective space equipped with an appropriate sesquilinear form. 
A celebrated result of Tits \cite{tits} shows that every finite polar space of rank at least $3$ is {\em classical}.

\subsection{Classical polar spaces}

Let $Q(X_0,\ldots,X_n)=\sum_{i,j=0, i\leq j}^{n} a_{ij} X_i X_j$ be a quadratic form over $\F_q$. A {\em quadric} $\mathcal{Q}$ in $\PG(n, q)$ is the set of points $\F_q v$ that satisfy $Q(v) = 0.$ Let $q$ be a square and let $H(X_0,\ldots,X_n)=\sum_{i,j=0}^{n} a_{ij} X_i X_j^{\sqrt{q}}$ with $a_{ij} = a_{ji}^{\sqrt{q}}$, be a Hermitian form over $\F_q$. A {\em Hermitian variety} in $\PG(n, q)$, denoted by $\mathcal{H}(n,q)$, is the set of points $\F_q v$ that satisfy $H(v)= 0$. A quadric or Hermitian variety of $\PG(n, q)$ is called \index{Singular quadric}{\em singular} if there exists a coordinate transformation which reduces the form to one in fewer variables, otherwise, the quadric or Hermitian variety is called \index{Non-singular quadric}{\em non-singular}. 

If $n$ is even, all non-singular quadrics in $\PG(n,q)$ are projectively equivalent to the quadric with equation $X_0^2+X_1X_2+\ldots+X_{n-1}X_n=0$. These quadrics are \index{Parabolic quadric}called {\em parabolic} and are denoted by \index{$\mathcal{Q}(n,q)$}
$\mathcal{Q}(n,q)$. 

If $n$ is odd, a non-singular quadric in $\PG(n,q)$ is either projectively equivalent to the quadric with equation $X_0X_1+\ldots+X_{n-1}X_n=0$ or to the quadric with equation $f(X_0,X_1)+X_2X_3+\ldots+X_{n-1}X_n=0$, where $f$ is an irreducible homogeneous quadratic form over $\F_q$. Quadrics of the first type are called \index{Hyperbolic quadric}{\em hyperbolic} and are denoted by $\mathcal{Q}^+(n,q)$, quadrics of the second type are called {\em elliptic} and are denoted by $\mathcal{Q}^-(n,q)$.

The incidence structure defined by a nonsingular quadratic or Hermitian variety, consisting of the subspaces that are contained in the variety all form polar spaces. We use the same notation for the polar space and the varieties. The polar spaces $\mathcal{Q}(n,q)$, $\mathcal{Q}^+(n,q)$, and $\mathcal{Q}^-(n,q)$ are called the {\it orthogonal polar spaces}, respectively of {\it parabolic}, {\it hyperbolic} and {\it elliptic} type; the polar space $\mathcal{H}(n,q)$ is called the {\it Hermitian} or {\it unitary polar space}.

Examples of quadrics and Hermitian varieties can be constructed using a {\em polarity}, which is a collineation of order two, of $\PG(n,q)$ onto its dual space. The image of a subspace $\pi$ under a polarity is denoted by $\pi^{\bot}$ and is called the {\em polar (space) of $\pi$}. If a subspace $\pi$ is contained in $\pi^{\bot}$, then $\pi$ is called {\em absolute}. A polarity is determined by a field automorphism $\sigma$ and a non-singular matrix $A$. There are four types of polarities $(\sigma,A)$ of $\PG(n,q)$, listed below.
\begin{itemize}
\item[(i)] If $\sigma=1$, $q$ odd, $A=A^T$, then the polarity $(\sigma,A)$ is called an \index{Orthogonal polarity}{\em orthogonal} polarity.
\item[(ii)] If $\sigma=1$, $A = -A^T$, and $a_{ii}= 0$ for all $i$, then every point is an absolute point, $n$ should be odd, and the polarity $(\sigma,A)$ is called a \index{Symplectic polarity}{\em symplectic} polarity.
\item[(iii)] If $\sigma= 1$, $q$ even, $A = A^T$ and $a_{ii}\neq 0$ for some $i$, then the polarity $(\sigma,A)$ is called a \index{Pseudo-polarity}{\em pseudo-polarity}. 
\item[(iv)] If $\sigma\neq1$, then $q$ is a square, $\sigma:x\mapsto x^{\sqrt{q}}$, $A=A^{T\sigma}$ and $(\sigma, A)$ is called a \index{Hermitian polarity}{\em Hermitian} or \index{Unitary polarity}{\em unitary} polarity. 
\end{itemize}

 If $q$ is odd, then the absolute points of an orthogonal polarity form a quadric  in $\PG(n, q)$. If $q$ is a square, then the absolute points of a Hermitian polarity form a Hermitian variety in $\PG(n, q)$. 

The points of $\PG(n, q)$, $n$ odd, $n\geq 3$, together with the absolute subspaces of a symplectic polarity of $\PG(n,q)$ form a \index{Symplectic polar space} {\em symplectic polar space}, denoted by \index{$\mathcal{W}(n,q)$}$\mathcal{W}(n,q)$.

Together the polar spaces $\mathcal{Q}(n,q)$, $\mathcal{Q}^+(n,q)$, $\mathcal{Q}^-(n,q)$, $\mathcal{H}(n,q)$ and 
$\mathcal{W}(n,q)$ are called the {\it classical polar spaces}. If $r$ is the maximum dimension of a subspace contained in a classical polar space $\P$, then $r+1$ is the {\em rank} of $\P$.

The classical polar spaces can also be introduced using the theory of sesquilinear forms on $\F_q^n$.
If $Q$ denotes the quadratic form defining one of the orthogonal polar spaces, then the associated bilinear form
$\beta_Q(x,y):=Q(x+y)-Q(x)-Q(y)$ is symmetric, and if $q$ is odd, the quadratic form can be obtained from the bilinear form. Similarly $\mathcal{H}(n,q)$ corresponds to a $\sigma$-sesquilinear form $\beta_{\mathcal H}$, where $x^\sigma=x^{\sqrt{q}}$ (called {\it unitary form}), and $\mathcal{W}(n,q)$ corresponds to an alternating bilinear form $\beta_{\mathcal W}$. Note that if $\beta$ is a form corresponding to one of the classical polar spaces, and $\pi \mapsto \pi^{\bot}$ is the associated polarity, then we have $\beta(x,y)=0$ if and only if the hyperplane $(\F_q x)^\perp$ contains the point $\F_q y$. We call a subspace $\pi$ {\em totally isotropic} with respect to the form $\beta$ if for all points $\F_q x$ and $\F_q y$ in $\pi$, $\beta(x,y)=0$. A symmetric bilinear form with $\beta(x,x)\neq 0$ for some $x$ is called a {\em pseudo-symplectic}.

Let $\P$ be one of the orthogonal polar spaces in $\PG(n,q)$ with associated quadratic form 
$Q$ and bilinear form $\beta_Q$. A {\it hyperbolic line} of $\P$ is a line containing two points
$\F_q x$ and $\F_q y$ with $Q(x)=Q(y)=0$ and $\beta_Q(x,y)=1$. 

The classification
of quadratic forms over finite fields then gives us the following.
\begin{itemize}
\item The polar space $\Q^+(2n+1,q)$ is the orthogonal sum of $n+1$ hyperbolic lines. 
\item The polar space
$\Q^-(2n+1,q)$ is the orthogonal sum of $n$ hyperbolic lines and an {\it elliptic line}, corresponding
to $f(X_0,X_1)$.
\item The polar space $\Q(2n,q)$ is the orthogonal sum of $n$ hyperbolic lines and a point $\F_q x_0$ with 
$Q(x_0)\neq 0$, and we define the {\it sign of a parabolic quadric} $\Q(2n,q)$ to be $+1$ if $Q(x_0)$ is a square 
in $\F_q$ and $-1$ otherwise. 
\end{itemize}

The classical polar spaces as described above correspond to the classical groups: the orthogonal groups $O^+(2n,q)$, $O^-(2n,q)$, and $O(2n+1,q)$, the unitary group $U(n,q)$, and the symplectic group $Sp(n,q)$. The correspondence between the forms, the polar spaces, and the groups is given in the following table. 

{\small
\begin{center}
  \begin{tabular}{ |c|c|c| }
    \hline
     Quadratic form   & Polar space & Associated group  \\ \hline
    \hline
    {\it hyperbolic} & $\Q^+(2n-1,q)$  & $O^+(2n,q)$ \\  \hline
    {\it elliptic} & $\Q^-(2n-1,q)$ & $O^-(2n,q)$\\ \hline
    {\it parabolic} & $\Q(2n,q)$ & $O(2n+1,q)$\\ \hline
    \hline
    Bilinear form & Polar space & Associated group  \\ \hline
    \hline
    {\it hermitian} & $\mathcal{H}(n-1,q)$&$U(n,q)$\\ \hline
    {\it alternating}& $\mathcal{W}(n-1,q)$ & $Sp(n,q)$\\ \hline
    \hline
  \end{tabular}
\end{center}
}

\subsection{Field reduction and forms}
In order to obtain a polar space in $\PG(rt-1,q)$ from a polar space in $\PG(r-1,q^t)$, we associate a
form on $\F_q^{rt}$ starting from a form on $\F_{q^t}^r$  using the trace map.
Let $Tr$ denote the {\em trace map} from $\F_{q^t}$ to $\F_q$, 
$$Tr =Tr_{ \F_{q^t} / \F_q}: \F_{q^t}\mapsto \F_q: x\rightarrow x+x^q +\ldots+x^{q^{t-1}}.$$ 
Let $f$ be a form on $\F_{q^t}^r$, and let $L_\alpha$ be the map $\F_{q^t} \mapsto \F_q:  x \rightarrow Tr(\alpha x)$ with $\alpha \in \F_{q^t}$. The map $L_\alpha f=L_\alpha \circ f$ is clearly a form on $\F_{q}^{rt}$. If $f$ and $L_\alpha f$ are non-degenerate, then starting from a polar space in $\PG(r-1,q^t)$ with corresponding quadratic, alternating or hermitian form on $\F_{q^t}^r$, by {\em field reduction}, we can obtain a polar space in $\PG(rt-1,q)$.

 In \cite{gill}, N. Gill determines the conditions on $f$ and $\alpha$ to ensure that $L_\alpha f$ is non-degenerate if $f$ is non-degenerate.
 
 \begin{theorem}\cite[Theorem A]{gill}
 Let $\beta$ be a reflexive $\sigma$-sesquilinear form on $V(r,q^t)$, $Q$ a quadratic form, and $L_\alpha ~:~\F_{q^t}\rightarrow \F_q~:~x\mapsto Tr(\alpha x)$. Then\\
(i) $L_\alpha \beta$ is non-degenerate if and only if $\beta$ is non-degenerate and $\alpha \neq 0$; \\
(ii) if $q$ is even and $r$ is odd, then $L_\alpha  Q$ is degenerate;\\
(iii) if $q$ is odd or $r$ is even, then $L_\alpha  Q$ is non-degenerate if and only if $Q$ is non-degenerate and $\alpha \neq 0$.
 \end{theorem}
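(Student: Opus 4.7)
My strategy is to reduce both (i) and (iii) to a single non-degeneracy argument built on the trace pairing $\F_{q^t}\times\F_{q^t}\to\F_q,\ (\alpha,y)\mapsto Tr(\alpha y)$, and to handle the obstruction in (ii) by exhibiting an explicit singular direction inside the radical of the polarization. For (i), the easy direction is immediate: if $\alpha=0$ then $L_\alpha\beta$ is identically zero, and any $v\neq 0$ in the radical of $\beta$ lies in the radical of $L_\alpha\beta$ as well. For the converse, assume $\beta$ non-degenerate and $\alpha\neq 0$, and suppose $v\neq 0$ satisfies $Tr(\alpha\beta(v,w))=0$ for every $w\in V(r,q^t)$. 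The functional $w\mapsto\beta(v,w)$ is $\sigma$-sesquilinear over $\F_{q^t}$ and non-zero by non-degeneracy of $\beta$; since $\sigma$ is a field automorphism, its image is an $\F_{q^t}$-subspace of $\F_{q^t}$, hence all of $\F_{q^t}$. So $Tr(\alpha y)=0$ for every $y\in\F_{q^t}$, contradicting $\alpha\neq 0$ via non-degeneracy of the trace pairing.

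For (iii), the key observation is that $\F_q$-linearity of $L_\alpha$ makes the polarization of $L_\alpha Q$ equal to $L_\alpha\beta_Q$. When $q$ is odd, $Q$ and $\beta_Q$ determine each other bijectively (via $Q(x)=\tfrac12\beta_Q(x,x)$), so non-degeneracy of $Q$ is equivalent to non-degeneracy of $\beta_Q$, and similarly for $L_\alpha Q$ versus $L_\alpha\beta_Q$; applying (i) to $\beta_Q$ concludes this case. When $q$ is even and $r$ is even, the classification of non-singular quadrics on even-dimensional $\F_{q^t}$-spaces (types $\mathcal{Q}^+$ or $\mathcal{Q}^-$) shows $\beta_Q$ is a non-degenerate alternating form, so (i) again yields $L_\alpha\beta_Q$ non-degenerate, whence the singular radical of $L_\alpha Q$ is trivial. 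The reverse implication mirrors the easy direction of (i).

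For (ii), assume $Q$ non-degenerate and $\alpha\neq 0$, the other cases being trivial. Since $\beta_Q$ is alternating and $r$ is odd, $\operatorname{rad}_{\F_{q^t}}(\beta_Q)$ is a one-dimensional $\F_{q^t}$-subspace, say $\langle v_0\rangle$, and non-singularity of $Q$ forces $Q(v_0)\neq 0$. Repeating the argument of (i), the $\F_q$-radical of $L_\alpha\beta_Q$ coincides with this subspace, viewed now as a $t$-dimensional $\F_q$-subspace of $V(rt,q)$. It then suffices to produce non-zero $\mu\in\F_{q^t}$ with $L_\alpha Q(\mu v_0)=Tr\bigl(\alpha Q(v_0)\mu^2\bigr)=0$. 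In characteristic two, $\mu\mapsto\alpha Q(v_0)\mu^2$ is a bijection of $\F_{q^t}^{\times}$, so its image meets $\ker Tr\setminus\{0\}$, which is non-empty since $\ker Tr$ is an $\F_q$-hyperplane of $\F_{q^t}$ (for $t\geq 2$). The main technical subtlety lies precisely here: one must verify that the $\F_q$-radical on $V(rt,q)$ really collapses back to the $\F_{q^t}$-radical on $V(r,q^t)$, and then combine Frobenius bijectivity with the kernel of $Tr$ to hit a zero value of $Q$. Parts (i) and (iii) are then largely bookkeeping once this trace-pairing template is in place.
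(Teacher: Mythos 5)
The paper states this result purely as a citation of \cite[Theorem A]{gill} and contains no proof of its own, so there is no in-paper argument to compare yours against; judged on its own merits, your proof is correct. The ingredients are all deployed properly: the non-degeneracy of the trace pairing $(\alpha,y)\mapsto Tr(\alpha y)$; the surjectivity onto $\F_{q^t}$ of a non-zero $\sigma$-semilinear functional $w\mapsto\beta(v,w)$ (which reduces the radical of $L_\alpha\beta$ to that of $\beta$); the identity ``polarisation of $L_\alpha Q$ equals $L_\alpha\beta_Q$''; and, for (ii), the fact that in characteristic $2$ with $r$ odd a non-singular $Q$ has $\mathrm{rad}(\beta_Q)=\langle v_0\rangle_{\F_{q^t}}$ with $Q(v_0)\neq 0$, on which $\mu\mapsto\alpha Q(v_0)\mu^2$ is a bijection of $\F_{q^t}^{\times}$, so that it hits the non-trivial set $\ker Tr\setminus\{0\}$. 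Two small remarks. First, the step you single out as the main subtlety in (ii) --- that the $\F_q$-radical of $L_\alpha\beta_Q$ collapses exactly to $\F_{q^t}v_0$ --- is not actually needed: the trivial inclusion $\F_{q^t}v_0\subseteq\mathrm{rad}(L_\alpha\beta_Q)$ already places your vector $\mu v_0$ in the singular radical of $L_\alpha Q$, which is all that degeneracy requires. Second, part (ii) silently requires $t\geq 2$ (for $t=1$ one has $L_\alpha Q=\alpha Q$, which is non-degenerate when $Q$ is); you note this only parenthetically via $\ker Tr$ being a hyperplane, and it would be cleaner to state the hypothesis up front, even though it is implicit in the field-reduction setting.
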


\begin{lemma}\label{lem:image}
Let $L_\alpha ~:~\F_{q^t}\rightarrow \F_q~:~x\mapsto Tr(\alpha x)$, $\alpha \in \F_{q^t}^\ast$. Suppose that $L_\alpha\beta$ and $L_\alpha Q$ are non-degenerate.
The image under the field reduction map of an absolute subspace of a polar space in $\PG(r-1,q^t)$, with associated sesquilinear form $\beta$ or quadratic form $Q$,  is an absolute subspace in $\PG(rt-1,q)$ of the polar space with associated sesquilinear form $L_\alpha\beta$ or quadratic form $L_\alpha Q$.
\end{lemma}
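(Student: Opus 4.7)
The plan is to exploit the fact that, although $\FR(\pi)=\PG(S_U)$ is viewed as a subspace over $\F_q$ whereas $\pi=\PG(U)$ is a subspace over $\F_{q^t}$, the underlying sets of vectors $S_U$ and $U$ coincide. By definition, if $U=\langle u_1,\ldots,u_k\rangle_{\F_{q^t}}$, then
\[
S_U=\{\alpha_1 u_1+\cdots+\alpha_k u_k~:~\alpha_1,\ldots,\alpha_k\in\F_{q^t}\}=U,
\]
the only difference being which scalars we are allowed to multiply by. Since the property of being an absolute subspace is a pointwise condition (vanishing of $\beta$ on all pairs, or of $Q$ on all vectors), checking absoluteness of $\FR(\pi)$ with respect to $L_\alpha\beta$ or $L_\alpha Q$ reduces to checking the original vanishing condition on the set $U$.

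Concretely, for the sesquilinear case I would argue as follows. Suppose $\pi$ is absolute with respect to $\beta$, i.e.\ $\beta(v,w)=0$ for every $v,w\in U$. Then for any two vectors $v,w\in S_U=U$ we have
\[
(L_\alpha\beta)(v,w)=Tr\bigl(\alpha\,\beta(v,w)\bigr)=Tr(0)=0,
\]
so every pair of points of $\FR(\pi)$ is orthogonal with respect to $L_\alpha\beta$, showing that $\FR(\pi)$ is an absolute (totally isotropic) subspace of the polar space in $\PG(rt-1,q)$ associated with $L_\alpha\beta$. For the quadratic case, suppose $Q(v)=0$ for every $v\in U$; then for every $v\in S_U=U$,
\[
(L_\alpha Q)(v)=Tr\bigl(\alpha\,Q(v)\bigr)=Tr(0)=0,
\]
so $\FR(\pi)$ is totally singular with respect to $L_\alpha Q$.

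There is really no hard step: the non-degeneracy hypotheses on $L_\alpha\beta$ and $L_\alpha Q$ are only needed to guarantee that the target polar space is well-defined, while the absoluteness assertion itself follows immediately from the identification $S_U=U$ together with $Tr(0)=0$. The only conceptual subtlety worth flagging is that the statement crucially uses total isotropy/singularity and not merely self-polarity via $\perp$, since the notion of $\perp$ changes when the form changes; but total isotropy, being the vanishing of the form on a set of vectors, transfers across the $\F_{q^t}$-to-$\F_q$ reduction without any further work.
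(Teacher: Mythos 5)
Your proof is correct and follows essentially the same route as the paper: both arguments observe that the vectors underlying $\FR(\pi)$ are exactly the vectors underlying $\pi$ (the paper writes this as the set of all $\F_{q^t}$-multiples $\lambda x$ of points of $\pi$, you write it as $S_U=U$), and that vanishing of $\beta$ or $Q$ on these vectors is preserved under composition with $L_\alpha$ since $Tr(0)=0$. Your explicit remark that absoluteness is being read as total isotropy/singularity is a fair clarification of what the paper does implicitly.
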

\begin{proof}
Suppose $\pi$ is an absolute subspace of the polar space in $\PG(r-1,q^t)$ with associated sesquilinear form $\beta$. Then for each two points $\F_{q^t}x,\F_{q^t}y$ in  $\pi$ we have
$\beta(\lambda x, \mu y)=0$, $\forall \lambda, \mu \in \F_{q^t}$, and hence
$L_\alpha\beta(\lambda x, \mu y)=0$, $\forall \lambda, \mu \in \F_{q^t}$. This implies that
for each two points $\F_q u$ and $\F_q v$ in $\FR(\pi)$ we have $L_\alpha\beta(u,v)=0$. It follows
that $\FR(\pi)$ is absolute with respect to $L_\alpha\beta$. The proof is analogous using a quadratic form.
\end{proof}

\subsubsection{Quadratic form field reduction}
The orthogonal polar spaces are defined by a quadratic form, and the field reduction of these spaces is studied using that form. 

In \cite{gill} the author determines the possible polar spaces that can be obtained for each quadratic form.
The approach used in \cite{gill} is from a group theory perspective, so we will go through the list of possibilities, and give elementary proofs of the results using our terminology. We obtain slightly different conditions.

Field reduction does not change the type of the orthogonal polar spaces in odd dimensional projective space. For the orthogonal polar space in even dimensional projective space, i.e. of parabolic type, the situation is more complicated.

\begin{theorem}
Let $Q$ be a non-degenerate quadratic form on $\F_{q^t}^r$ corresponding to the polar space $\mathcal Q$, and let $L_\alpha~:~\F_{q^t}\rightarrow \F_q~:~x\mapsto Tr(\alpha x)$, $\alpha \in \F_{q^t}^\ast$. Suppose $L_\alpha Q$ is non-degenerate and let ${\mathcal{Q}}'$ denote the polar space defined by $L_\alpha Q$. Then the following holds:\\
(i) if $\mathcal Q$ is of hyperbolic type, then so is ${\mathcal{Q}}'$;\\
(ii) if $\mathcal Q$ is of elliptic type, then so is ${\mathcal{Q}}'$;\\
(iii) if $\mathcal Q$ is of parabolic type, choose $\gamma \in \F_{q^t}^*$ a square if $sign({\mathcal{Q}})=1$ 
and a non-square if $sign({\mathcal{Q}})=-1$, then $q$ is odd and
\begin{itemize}
\item $\mathcal{Q}'$ is of parabolic type if $t$ is odd;
\item if $t$ is even then ${\mathcal{Q}}'$ is of hyperbolic type if (a) $q^{t/2}\equiv 1$ mod 4, and $\alpha\gamma$ is a non-square in $\F_{q^t}$ or (b) $q^{t/2}\equiv 3$ mod 4, and $\alpha\gamma$ is a square in $\F_{q^t}$;
\item $\mathcal{Q}'$ is of elliptic type in the remaining cases.
\end{itemize}
\end{theorem}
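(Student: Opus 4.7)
The plan is to decompose the form $Q$ orthogonally over $\F_{q^t}$ into hyperbolic lines plus at most one residual anisotropic block (an elliptic line in case (ii), an anisotropic point in case (iii)), and exploit the fact that field reduction respects orthogonal sums. Indeed, since $L_\alpha$ is $\F_q$-linear and the polar bilinear form of $L_\alpha Q$ is $L_\alpha$ applied to the polar form of $Q$, if $Q=Q_1\perp Q_2$ on $V_1\oplus V_2$ over $\F_{q^t}$, then $L_\alpha Q = L_\alpha Q_1 \perp L_\alpha Q_2$ on the same decomposition viewed over $\F_q$. Combined with the composition rules $\mathcal{Q}^+ \perp \mathcal{Q}^+ = \mathcal{Q}^+$, $\mathcal{Q}^+ \perp \mathcal{Q}^- = \mathcal{Q}^-$, and $\mathcal{Q}^+ \perp \mathcal{Q} = \mathcal{Q}$, the theorem reduces to computing the type of $L_\alpha$ applied to each elementary block.

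\textbf{Hyperbolic and elliptic blocks.} For a hyperbolic line $Q(X,Y)=XY$ on $\F_{q^t}^2$, the $\F_q$-subspace $\{X=0\}$ has $\F_q$-dimension $t$ and is totally singular for $L_\alpha Q = Tr_{q^t/q}(\alpha XY)$ on $\F_q^{2t}$, so $L_\alpha Q$ has maximal Witt index $t$ and is of hyperbolic type $\mathcal{Q}^+(2t-1,q)$; this settles (i). For an elliptic line I would identify $\F_{q^t}^2$ with $\F_{q^{2t}}$ so that the anisotropic binary form becomes (up to a scalar $c\in\F_{q^t}^*$) the norm $N(X)=X^{q^t+1}$, and enumerate the zero vectors of $Tr_{q^t/q}(\alpha c X^{q^t+1})$: since $N$ maps $\F_{q^{2t}}^*$ onto $\F_{q^t}^*$ with every fibre of size $q^t+1$, the zero count is $(q^{t-1}-1)(q^t+1)+1$, matching the zero-vector count of $\mathcal{Q}^-(2t-1,q)$. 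Together with the hyperbolic block this gives (ii).

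\textbf{Parabolic case.} In (iii), $Q$ is the orthogonal sum of $n$ hyperbolic lines and an anisotropic one-dimensional block on $\F_{q^t}x_0$ with $Q(x_0)=\gamma$. The hyperbolic blocks contribute $\mathcal{Q}^+(2nt-1,q)$, and the anisotropic block field-reduces to $Q_\beta(\lambda):=Tr_{q^t/q}(\beta\lambda^2)$ on $\F_{q^t}\cong\F_q^t$ with $\beta=\alpha\gamma$. Non-degeneracy forces $q$ odd. For $t$ odd, $Q_\beta$ has odd $\F_q$-dimension and is parabolic, so $\mathcal{Q}'$ is parabolic. For $t$ even, $Q_\beta$ is either $\mathcal{Q}^+(t-1,q)$ or $\mathcal{Q}^-(t-1,q)$, and the type of $\mathcal{Q}'$ agrees with that of $Q_\beta$.

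\textbf{The main obstacle.} Determining the type of $Q_\beta$ for $t$ even is the crux. The change of variable $\lambda\mapsto c\lambda$ shows the type only depends on the square class of $\beta$ in $\F_{q^t}^*$. Writing the indicator of $\{Tr(\beta\lambda^2)=0\}$ via additive characters and applying the standard identity
\[
\sum_{\lambda\in\F_{q^t}}\psi_q\bigl(Tr(a\lambda^2)\bigr) = \chi_{q^t}(a)\,G(\chi_{q^t},\Psi_{q^t}),\quad a\in\F_{q^t}^*,
\]
where $\chi_{q^t}$ is the quadratic character and $\Psi_{q^t}=\psi_q\circ Tr$, one obtains
\[
\#\{\lambda:Tr(\beta\lambda^2)=0\} = q^{t-1} + \tfrac{q-1}{q}\,\chi_{q^t}(\beta)\,G(\chi_{q^t},\Psi_{q^t}),
\]
using that $\chi_{q^t}$ is trivial on $\F_q^*$ when $t$ is even. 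The Davenport--Hasse relation reduces $G(\chi_{q^t},\Psi_{q^t})$ to the $t$-th power of the classical quadratic Gauss sum over $\F_q$, giving $\pm q^{t/2}$ with a sign controlled precisely by $q^{t/2}\bmod 4$. Matching the resulting count with the zero-vector counts $q^{t-1}\pm(q-1)q^{t/2-1}$ of $\mathcal{Q}^\pm(t-1,q)$ then yields the dichotomy (a), (b). All the arithmetic subtlety of the theorem is concentrated in this Gauss-sum sign calculation.
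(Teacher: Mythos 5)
Your proposal is correct, and while its skeleton — orthogonal decomposition into hyperbolic lines plus one anisotropic block, compatibility of $L_\alpha$ with orthogonal sums, and the maximal-Witt-index argument for the hyperbolic case — matches the paper's, the treatment of the two anisotropic blocks is genuinely different. For the elliptic line the paper argues by contradiction: a totally isotropic $(t-1)$-space would be the graph of an $\F_q$-linear map $g$, and $Tr(\alpha(aY^2+bYg(Y)+cg(Y)^2))$ reduces modulo $Y^{q^t}-Y$ to a polynomial of degree less than $q^t$, so its identical vanishing would force $Q$ to be reducible; you instead identify the block with (a scalar multiple of) the norm form of $\F_{q^{2t}}/\F_{q^t}$ and count singular vectors via the fibres of the norm map, which is shorter and avoids the separate $q=2$ case the paper needs. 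For the parabolic residue with $t$ even, the paper solves $(\alpha\gamma u^2)^{q-1}=-1$ explicitly for $t=2$ and then bootstraps to general even $t$ through the tower $\F_q\subset\F_{q^{t/2}}\subset\F_{q^t}$ using parts (i) and (ii); you compute the zero count of $Tr(\beta\lambda^2)$ in one stroke with quadratic Gauss sums and Davenport--Hasse. I checked the arithmetic: $G(\chi_{q^t},\Psi_{q^t})=-q^{t/2}$ when $q^{t/2}\equiv 1 \bmod 4$ and $+q^{t/2}$ when $q^{t/2}\equiv 3 \bmod 4$, and combined with $\chi_{q^t}(\alpha\gamma)$ this reproduces conditions (a) and (b) exactly, with the counts $q^{t-1}\pm(q-1)q^{t/2-1}$ matching $\mathcal{Q}^{\pm}(t-1,q)$. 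The paper's route is more elementary and self-contained; yours concentrates all the case analysis into one Gauss-sum sign. The only step you should write out explicitly is the standard fact that every anisotropic binary quadratic form over $\F_{q^t}$ is, after a change of basis and up to a scalar, the norm form of $\F_{q^{2t}}$ — this is what legitimises the fibre count in both odd and even characteristic — and, for the block-by-block bookkeeping, that the hypothesis that $L_\alpha Q$ is non-degenerate descends to each orthogonal summand.
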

\begin{proof}
(i) Since $\mathcal Q$ has rank $r/2$, the polar space ${\mathcal{Q}}'$ will have rank at least $rt/2$, by Lemma \ref{lem:image}. This implies that ${\mathcal{Q}}'$ is of hyperbolic type.

\bigskip

(ii) If $\mathcal Q$ is of elliptic type, then $r$ is even, say $r=2n$, and we know that up to the choice of a basis it is the orthogonal sum of a $Q^-(1,q^t)$ and $n-1$ hyperbolic lines. The additivity of $L_\alpha$ and part (i) imply that $L_\alpha Q$ is the orthogonal sum of the field reduced $Q^-(1,q^t)$ and $n-1$ copies of a $(2t-1)$-dimensional space of hyperbolic type. 

So we only need to consider $L_\alpha Q$ with $Q$ of elliptic type in $\PG(1,q^t)$. 
If $L_\alpha Q$ is of hyperbolic type, then w.l.o.g. we may assume that the $(t-1)$-space $\pi=\{\langle (y,g(y))\rangle_{\F_q}~:~y \in \F_{q^t}^*\}$, with $g(Y)=\sum_{j=0}^{t-1}g_jY^{q^j}$ some $\F_q$-linear map, is totally isotropic with respect to $L_\alpha Q$, where $Q(X_0,X_1)=aX_0^2+bX_0X_1+cX_1^2$ irreducible in $\F_{q^t}[X_0,X_1]$. This means that
\begin{eqnarray}\label{eqn:trace1}
Tr(\alpha (ay^2+byg(y)+cg(y)^2))=0~\mbox{for all}~y \in \F_{q^t}.
\end{eqnarray}
But $Tr(\alpha (aY^2+bYg(Y)+cg(Y)^2))$
reduces, modulo $Y^{q^t}-Y$, to a polynomial of degree $\leq 2q^{t-1}$ in $Y$ which is less than $q^t$ if $q>2$. So if $q>2$, this implies that $Q(X_0,X_1)$ is reducible, which is a contradiction, and we may conclude that 
there is no totally isotropic $(t-1)$-space. Therefore $L_\alpha Q$ and hence ${\mathcal{Q}}'$ is of elliptic type. If $q=2$ then using $Tr(\gamma y^2)=Tr((\gamma)^{1/2} y)$, the polynomial $Tr(\alpha (aY^2+bYg(Y)+cg(Y)^2))$
reduces, modulo $Y^{2^t}-Y$, to a polynomial of degree $\leq 2^{t-1}$ in $Y$
which less than $2^t$, and hence again equation (\ref{eqn:trace1}) implies that
$aY^2+bYg(Y)+cg(Y)^2$ is reducible. We may conclude that also for $q=2$ 
the polar space ${\mathcal{Q}}'$ is of elliptic type.

\bigskip

(iii) Suppose $\mathcal Q$ is of parabolic type. By the non-degeneracy hypothesis this implies that $q$ is odd. If $t$ is odd, then $rt$ is odd and $L_\alpha Q$ must be of parabolic type. Next we consider the case when $t$ is even.

W.l.o.g. assume that $\mathcal Q$ is the orthogonal sum of $(r-1)/2$ hyperbolic lines and the point $\F_{q^t}x_0$, where $Q(x_0)=\gamma\neq 0$. Again, as in part (ii), using the additivity of $L_\alpha$ and part (i) imply that we only need to consider $L_\alpha Q$ with $Q(X_0)=\gamma X_0^2$. Note that $L_\alpha Q$ is of hyperbolic type if and only if 
$\exists u\in \F_{q^t}^*$ such that $Tr(\alpha \gamma u^2)=0$, otherwise $L_\alpha Q$ is of elliptic type.

First suppose that $t=2$.
We have
$$Tr(\alpha \gamma u^2)=0~\Leftrightarrow~ \alpha \gamma u^2 (1+(\alpha \gamma u^2)^{q-1})=0
~\Leftrightarrow~ (\alpha \gamma u^2)^{q-1}=-1.$$
If $\omega$ is a primitive element of $\F_{q^2}$, then this is equivalent to
$$(\alpha \gamma u^2)^{q-1}=\omega^{(q^2-1)/2}~\Leftrightarrow ~u^2=\frac{\xi \omega^{(q+1)/2}}{\alpha \gamma},$$
for some $\xi \in \F_q^*$. We have shown that $L_\alpha Q$ is of hyperbolic type if and only if 
$\exists u\in \F_{q^2}^*$ such that $u^2=\frac{\xi \omega^{(q+1)/2}}{\alpha \gamma}$, for some $\xi \in \F_q^*$. Note that $\xi$ is a square in $\F_{q^2}$ and $\omega^{(q+1)/2}$ is a square in $\F_{q^2}$ if and only if $q=3$ mod $4$.
This gives us the following conditions: $L_\alpha Q$ is of hyperbolic type if and only if
\begin{itemize}
\item[(a'')]  $q\equiv 1$ mod 4 and $\alpha \gamma$ is a non-square in $\F_{q^2}$;
\item[(b'')] $q\equiv 3$ mod 4 and $\alpha \gamma$ is a square in $\F_{q^2}$.
\end{itemize}

Next suppose $t>2$ even. If $t'=t/2$ then $\F_q \subset \F_{q^{t'}} \subset \F_{q^t}$ and 
$$Tr=Tr_{ \F_{q^t} / \F_q}=Tr_{ \F_{q^{t'}} / \F_q} Tr_{ \F_{q^t} / \F_{q^{t'}}} .$$
Applying parts (i) and (ii) and the arguments used for the case $t=2$, the conditions (a'') and (b'') become
\begin{itemize}
\item[(a')] $q^{t/2}\equiv 1$ mod 4 and $\alpha \gamma$ is a non-square in $\F_{q^t}$;
\item[(b')] $q^{t/2}\equiv 3$ mod 4 and $\alpha \gamma$ is a square in $\F_{q^t}$. 
\end{itemize}
Using the fact that $\gamma$ is a square if and only if $sign({\mathcal{Q}})=1$ concludes the proof.
\end{proof}

\subsubsection{Bilinear form field reduction}

For the Hermitian and symplectic polar spaces, we need to use the sesquilinear form to study the possible polar spaces that are obtained after field reduction. The following theorem, from \cite{gill}, summarises the results, where {\em atypical} indicates that the bilinear form is not of one of the prescribed types.

\begin{theorem}\cite[Theorem C]{gill}
Let $\beta$ be a non-degerate $\sigma$-sesquilinear form $\beta:\F_{q^t}^r\times \F_{q^t}^r\rightarrow \F_{q^t}$, with corresponding polar space of hermitian or symplectic type, and $L_\alpha=Tr\circ \alpha$ with $0\neq \alpha \in \F_{q^t}$.
Then the type of $L_\alpha\beta$ is as follows.
\begin{center}
{\small
  \begin{tabular}{ |c|c|c|c| }
    \hline
    Type of $\beta$ & Type of $L_\alpha\beta$ & Conditions&Embedding \\ \hline
    \hline
    hermitian&hermitian&$t$ odd, $\sigma(\alpha)=\alpha$& $U(r,q^t)\leq U(rt,q)$\\  \hline
    hermitian&atypical&$t$ odd, $\sigma(\alpha)\neq \alpha$ & --\\ \hline
    hermitian & alternating & $t$ even, $q$ even, $\sigma(\alpha)=\alpha$ & $U(r,q^t)\leq Sp(rt,q)$\\ \hline
    hermitian & alternating &$t$ even, $q$ odd, $\sigma(\alpha)=-\alpha$ & $U(r,q^t)\leq Sp(rt,q)$\\ \hline
    hermitian & atypical& $t$ even, $\sigma(\alpha)\neq \pm \alpha$& --\\ \hline
    hermitian & hyperbolic& $t$ even, $q$ odd, $r$ even, $\sigma(\alpha)=\alpha$ & $U(r,q^t)\leq O^+(rt,q)$\\ \hline
    hermitian & elliptic& $t$ even, $q$ odd, $r$ odd, $\sigma(\alpha)=\alpha$ & $U(r,q^t)\leq O^-(rt,q)$\\ \hline
    alternating &alternating&-- & $Sp(r,q^t)\leq Sp(rt,q)$\\ \hline
        pseudo-symplectic & pseudo-symplectic & $q$ even& --\\
    \hline
  \end{tabular}
  }
\end{center}
\end{theorem}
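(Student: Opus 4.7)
The plan is to verify biadditivity and $\F_q$-bilinearity of $L_\alpha\beta = Tr\circ\alpha\beta$ by direct computation, which is immediate from the $\F_q$-linearity of $Tr$ and the biadditivity/sesquilinearity of $\beta$ over $\F_{q^t}$. Non-degeneracy in each non-atypical row follows from part (i) of the previously cited theorem of Gill, so only the \emph{type} of the resulting form has to be identified. Two rows are essentially immediate: the alternating-to-alternating row because $\beta(x,x)=0\Rightarrow L_\alpha\beta(x,x)=0$, and the pseudo-symplectic row ($q$ even) because symmetry is preserved by any $\F_q$-linear functional and the existence of a non-isotropic vector persists since $L_\alpha$ is a non-degenerate trace pairing.

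The remaining seven rows all concern hermitian $\beta$. The central tool is the identity
\[
Tr(\alpha\,\sigma(b))\;=\;Tr(\sigma(\alpha)\,b)\qquad\text{for all }b\in\F_{q^t},
\]
which holds because $\sigma$ equals the $\sqrt{q^t}$-power Frobenius and $Tr$ is Frobenius-invariant. Combined with the hermitian identity $\beta(y,x)=\sigma(\beta(x,y))$, this yields
\[
L_\alpha\beta(y,x)\;=\;Tr(\sigma(\alpha)\,\beta(x,y)).
\]
I would then split on the parity of $t$. For $t$ odd, $q$ must itself be a square and $\sigma$ restricts to the $\F_q$-involution $x\mapsto x^{\sqrt q}$; the displayed identity together with the standard duality between $\F_q$-linear functionals on $\F_{q^t}$ and elements of $\F_{q^t}$ (every such functional equals $Tr\circ\alpha'$ for a unique $\alpha'$) forces $L_\alpha\beta$ to be $\sigma|_{\F_q}$-hermitian exactly when $\sigma(\alpha)=\alpha$, and atypical otherwise. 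For $t$ even, $\sigma$ fixes $\F_q$ pointwise, so $L_\alpha\beta$ is $\F_q$-bilinear, and the identity classifies it as symmetric iff $\sigma(\alpha)=\alpha$, skew-symmetric iff $\sigma(\alpha)=-\alpha$, atypical otherwise. Factoring $Tr_{\F_{q^t}/\F_q}=Tr_{\F_{q^{t/2}}/\F_q}\circ(1+\sigma)$ and using that $\beta(x,x)$ lies in the fixed field $\F_{q^{t/2}}$ of $\sigma$, a short check then shows $L_\alpha\beta(x,x)=0$ in precisely the two alternating rows ($\sigma(\alpha)=\alpha$ with $q$ even, and $\sigma(\alpha)=-\alpha$ with $q$ odd).

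The main obstacle is the last pair of rows, namely $t$ even, $q$ odd, $\sigma(\alpha)=\alpha$, where $L_\alpha\beta$ polarises a non-degenerate quadratic form $L_\alpha Q$ on $\F_q^{rt}$ and one must decide whether it is hyperbolic or elliptic. I would fix a hermitian diagonal basis of $(\F_{q^t}^r,\beta)$, which makes $L_\alpha\beta$ the orthogonal sum of $r$ copies of a single $t$-dimensional form $Q_1(x)=\tfrac12 Tr(\alpha\,x\sigma(x))$ on $\F_q^t$. It then suffices to (a) determine the type of $Q_1$ and (b) assemble $r$ copies. For (a) one counts the isotropic vectors of $Q_1$, either via Gauss sums applied to $Tr(\alpha x^{q^{t/2}+1})=0$ or via a discriminant computation on a normal $\F_q$-basis of $\F_{q^t}$; under the hypothesis $\sigma(\alpha)=\alpha$ this yields that $Q_1$ is of elliptic type of Witt index $t/2-1$. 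For (b) one applies the standard Witt-theoretic identity $\Q^-(2a-1,q)\perp\Q^-(2b-1,q)\cong\Q^+(2a+2b-1,q)$ (two anisotropic lines combine into a hyperbolic plane), which iteratively gives $\Q^+$ when $r$ is even and $\Q^-$ when $r$ is odd, matching the last two rows of the table.
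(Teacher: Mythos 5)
The paper does not prove this statement; it is quoted directly from Gill's Theorem C, and the authors explicitly say Gill's own treatment is group-theoretic. Your blind proof is therefore not competing with an argument in the text, but it is essentially correct and, pleasingly, is in the same elementary style the authors adopt for the quadratic-form analogue (diagonalise, reduce to the rank-one case, decide the type by a counting or polynomial argument, then reassemble with Witt's rules). All the individual steps check out: the classification of symmetric/alternating/atypical via non-degeneracy of the trace pairing, the computation $L_\alpha\beta(x,x)=Tr_{\F_{q^{t/2}}/\F_q}\bigl((\alpha+\sigma(\alpha))\beta(x,x)\bigr)$ for $t$ even, and the elliptic type of the rank-one form: the number of nonzero isotropic vectors of $x\mapsto Tr(\alpha\, x^{q^{t/2}+1})$ is $(q^{t/2}+1)(q^{t/2-1}-1)$ by surjectivity of the relative norm, which is exactly the elliptic count, and $r$ elliptic summands give $\Q^+$ or $\Q^-$ according to the parity of $r$.

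One technical slip deserves correction. The identity $Tr(\alpha\,\sigma(b))=Tr(\sigma(\alpha)\,b)$ holds only when $\sigma$ is a power of the $q$-Frobenius, i.e.\ when $t$ is even. For $t$ odd, $\sigma:x\mapsto x^{\sqrt{q^t}}$ is not $Tr_{\F_{q^t}/\F_q}$-invariant; instead $Tr(\sigma(y))=\bigl(Tr(y)\bigr)^{\sqrt q}$, so the correct identity is $Tr(\alpha\,\sigma(b))=\bigl(Tr(\sigma(\alpha)\,b)\bigr)^{\sqrt q}$. This is harmless --- indeed the extra $\sqrt q$-power is precisely what is needed for $L_\alpha\beta(y,x)=\bigl(L_\alpha\beta(x,y)\bigr)^{\sqrt q}$ to hold exactly when $\sigma(\alpha)=\alpha$ --- but as written the displayed identity is false in the $t$ odd case and should be split according to the parity of $t$. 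The justification of the pseudo-symplectic row is also terser than it should be: to see that a non-isotropic vector survives, note that in characteristic $2$ the set $\{\beta(x,x)\}$ is an additive group closed under multiplication by squares, hence equals all of $\F_{q^t}$ once it is nonzero, after which $Tr(\alpha\cdot)$ cannot vanish on it.
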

The last column of the table
provides a list of possible embeddings in terms of the associated groups. 

\subsubsection{Conclusion}
We summarise the possibilities for field reduction of the classical polar spaces in the following table, where the polar space in $\PG(rt-1,q)$ is obtained from the polar space in $\PG(r-1,q^t)$ using the map
$L_\alpha:\F_{q^t}\rightarrow \F_q~:~x\mapsto Tr_{\F_{q^t}/\F_q}(\alpha x)$ with $\alpha \in \F_{q^t}^*$.

{\small
\begin{center}
  \begin{tabular}{ |c|c|c|c| }
    \hline
    Polar space & Polar space & Conditions & Conditions\\ 
    in $\PG(r-1,q^t)$ & in $\PG(rt-1,q)$ & $r$, $t$, $q$ & $\alpha\neq 0$ \\ \hline
    \hline
    $hyperbolic$ & $hyperbolic$ & $r$ $even$ & --\\
    $elliptic$ & $elliptic$ & $r$ $even$ & --\\
       $parabolic$ & $parabolic$ & $r$ $odd$, $t$ $odd$, $q$ $odd$  &  --\\
    $parabolic$ & $hyperbolic$ $or$ $elliptic$ & $r$ $odd$, $t$ $even$ &  $(\ast)$ \\
 \hline
    $hermitian$ & $hermitian$ &$t$ $odd$,  $q$ $square$  & $\sigma(\alpha)=\alpha$ \\
        $hermitian$ & $symplectic$ & $t$ $even$ & $\sigma(\alpha)=-\alpha$ \\
   $hermitian$ & $hyperbolic$ & $t$ $even$, $q$ $odd$, $r$ $even$ &  $\sigma(\alpha)=\alpha$ \\
    $hermitian$ & $elliptic$ & $t$ $even$, $q$ $odd$, $r$ $odd$ & $\sigma(\alpha)=\alpha$\\
        $symplectic$ & $symplectic$  & $r$ $even$&  --\\ 

    \hline
  \end{tabular}
\newline  $(\ast)$ hyperbolic if ($q^{t/2}=1 \mod 4$ and $\alpha \gamma \notin \square$) or ($q^{t/2}=3 \mod 4$ and $\alpha \gamma\in \square$);
elliptic in the remaining cases, where $\square$ denotes the set of non-zero squares in $\F_{q^t}$ and $\gamma \in \square$ if $sign({\mathcal{Q}})=1$ and $\gamma \in \F_{q^t}^\ast\setminus \square$ if $sign({\mathcal{Q}})=-1$.

  \end{center}
}

Field reduction for polar spaces (also called the `trace trick') was used already in 1994 by Shult and Thas \cite{shult} to construct $m$-systems of polar spaces. Later on, the theory of {\em intriguing sets} extended that of $m$-systems and Kelly \cite{kelly} used field reduction to construct new examples of intriguing sets of polar spaces.
\section{Linear sets in projective spaces}\label{S4}
Linear sets generalise the concept of subgeometries in a projective space. They have many applications in finite geometry; linear sets have been intensively used in recent years in order to classify, construct or characterise various geometric structures, e.g. blocking sets and semifields that will be discussed at the end of this paper. For a further discussion of these and other applications, we refer to the survey of O. Polverino \cite{olga}.

\subsection{Definition}
To obtain a linear set in a projective space, some kind of reverse field reduction is used. The field reduction map
takes as input a subspace of $\PG(r-1,q^t)$ and returns a subspace of $\PG(rt-1,q)$. Or in other words
from an $\F_{q^t}$-subspace we obtain an $\F_q$-subspace.
A linear set, on the other hand, is defined by an $\F_q$-subspace and returns, not a subspace, but a subset of
a projective $\F_{q^t}$-linear space, i.e. a subset of some $\PG(r-1,q^t)$. 

More precisely, let $V={\mathbb{F}}_{q^t}^r$.
A set $L$ of points in $\PG(V)$ is called an {\em $\F_q$-linear set} ({\it  of rank $k$}) if there exists a subset $U$ of $V$ that forms a ($k$-dimensional) $\F_q$-subspace of $V$, such that $L=\B(U)$, where 
$$\B(U):=\{\F_{q^t}u:~u \in U\setminus \{0\}\}.$$ 
Often the notation $L_U$ is used to indicate the underlying subspace.
Obviously, if we say that the subset $U$ forms an $\F_q$-subspace of $V$, then 
we mean a subspace of the $rt$-dimensional space
that is obtained by considering $V$ as vector space over $\F_q$.
But from now on, we identify the $\F_q$-vector subspace $U$ with the subset $U$. This allows us to consider the projective subspace $\PG(U)$ in $\PG(rt-1,q)$. We summarize the above in the following diagram
\begin{displaymath}
\begin{array}{ccccccc}
U & \subseteq & {\mathbb{F}}_{q^t}^r & \longleftrightarrow & \F_q^{rt} & \supseteq & U\\
\\
 & & \updownarrow &  & \updownarrow & & \updownarrow \\
\\
L_U=\B(U) & \subseteq &\PG(r-1,q^t) & \longleftrightarrow & \PG(rt-1,q) & \supseteq & \PG(U)\\
\end{array}
\end{displaymath}

Recall that the field reduction map $\FR$ gives us a one-to-one correspondence between the points of $\PG(r-1,q^t)$ and the elements of a Desarguesian spread $\D_{r,t,q}$. This gives us a more geometric perspective on the notion of a linear set; namely, an $\F_q$-linear set is a set $L$ of points of $\PG(r-1,q^t)$ for which there exists a subspace $\pi$ in
$\PG(rt-1,q)$ such that the points of $L$ correspond to the elements of $\D_{r,t,q}$ that have a non-empty intersection with $\pi$. If there is no confusion possible, we will often identify the elements of $\D_{r,t,q}$ with the points of $\PG(r-1,q^t)$, i.e. a point $P$ is identified with its image under $\FR$.
This allows us to view $\B(\pi)$ as a subset of $\D_{r,t,q}$. This is illustrated by the following diagram, where as 
before $\mathcal P$ denotes the set of points of $\PG(r-1,q^t)$.
\begin{displaymath}
\begin{array}{ccccccc}
 & &\PG(r-1,q^t) & \longleftrightarrow & \PG(rt-1,q) & \supseteq & \pi\\
\\
 & & \downarrow & & \downarrow & & \Downarrow \\
\\
L=\B(\pi) & \subseteq& {\mathcal{P}} & \overset{\FR}{ \longleftrightarrow} & \D & \supseteq & \B(\pi) \\
\end{array}
\end{displaymath}

If $P$ is a point of $\B(\pi)$ in $\PG(r-1,q^t)$, where $\pi$ is a subspace of $ \PG(rt-1,q)$, then we define the \index{Weight}{\em weight of $P$} as $wt(P):=\mbox{dim}(\FR(P) \cap \pi)+1$. This makes a point to have weight 1 if its corresponding spread element intersects $\pi$ in a point. It is clear that a point of an $\F_q$-linear set of rank $k$ in $\PG(r-1,q^t)$ can have weight at most $\min\{ k,t\}$.


\begin{theorem}
Let $S=\B(\pi)$ be a linear set of rank $k>0$ and denote by $x_i$ the number of points of weight $i$, 
with $m=\min \{k,t\}$, then the following relations hold:\\
(i) $|S|=x_1+x_2+\cdots+x_m$\\
(ii) $x_1+(q+1)x_2+\frac{q^3-1}{q-1}x_3+\cdots+\frac{q^m-1}{q-1}x_m=\frac{q^k-1}{q-1}$\\
(iii) $|S|\leq \frac{q^k-1}{q-1}$\\
(iv) $|S|\equiv 1 \mod q$.
\end{theorem}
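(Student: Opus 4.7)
The plan is to exploit the one-to-one correspondence between points of $\PG(r-1,q^t)$ and elements of the Desarguesian spread $\D_{r,t,q}$ in $\PG(rt-1,q)$, and then count points of $\pi$ in two ways. Throughout, let $\pi$ be the $(k-1)$-dimensional subspace of $\PG(rt-1,q)$ with $S = \B(\pi)$.

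For part (i), the point is that weight is defined so as to be a positive integer, and every point $P \in S$ has $\FR(P)$ a $(t-1)$-space while $\pi$ is a $(k-1)$-space, so $\dim(\FR(P)\cap \pi) \leq \min\{k-1,t-1\}$, giving $\mathrm{wt}(P) \leq m$. By definition $\mathrm{wt}(P) \geq 1$ for $P \in S$. Hence the $x_i$ for $1 \leq i \leq m$ account for all points of $S$, yielding (i).

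For part (ii), the key observation is that the intersections $\FR(P)\cap \pi$, as $P$ ranges over $S$, partition the point set of $\pi$. Indeed, by Lemma \ref{lemma:field_reduction_map}(iii)--(iv), every point $x$ of $\PG(rt-1,q)$ lies in exactly one element of $\D_{r,t,q}$, namely $\FR(\F_{q^t} x)$. If $x \in \pi$, then this spread element meets $\pi$ (in at least the point $x$), so $\F_{q^t} x \in \B(\pi) = S$. Counting the points of $\pi$ through this partition, a point of weight $i$ contributes a subspace of $\pi$ of projective dimension $i-1$, that is, $(q^i-1)/(q-1)$ points. Since $|\pi| = (q^k-1)/(q-1)$, this gives exactly the identity in (ii).

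Parts (iii) and (iv) are then easy consequences of (ii). Since $(q^i-1)/(q-1) \geq 1$ for all $i \geq 1$, the left-hand side of (ii) is at least $x_1+\cdots+x_m = |S|$, yielding (iii). For (iv), note that
\[
\frac{q^i-1}{q-1} = 1 + q + q^2 + \cdots + q^{i-1} \equiv 1 \pmod q
\]
for every $i \geq 1$, and likewise $(q^k-1)/(q-1)\equiv 1 \pmod q$. Reducing (ii) modulo $q$ collapses it to $x_1 + x_2 + \cdots + x_m \equiv 1 \pmod q$, which is $|S| \equiv 1 \pmod q$. There is no real obstacle here; the only step requiring care is justifying the partition in the proof of (ii), and this is immediate from the spread property of $\D_{r,t,q}$ established in Lemma \ref{lemma:field_reduction_map}.
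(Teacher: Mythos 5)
Your proof is correct and takes essentially the same route as the paper, which proves (ii) by counting the pairs $(P\in\pi,\B(P))$ — that is, by the same partition of the points of $\pi$ into the intersections with spread elements — and notes that (i), (iii) and (iv) follow directly. Your write-up just makes explicit the details (weight bounds, the spread partition, the mod $q$ reduction) that the paper leaves to the reader.
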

\begin{proof}
For (ii), count the couples $\{ (P\in \pi,\B(P))\}$, the other items follow directly.
\end{proof}

If $\pi$ intersects the elements of $\mathcal D$ in at most a point, i.e. the size of $\B(\pi)$ is maximal, or equivalently every point of $\B(\pi)$ has weight one, then we say that $\pi$ is {\it scattered with respect to $\mathcal D$}; in this case ${\mathcal B}(\pi)$ is called a {\it scattered linear set}. The notion of scattered linear sets was introduced in
\cite{BlLa2000}, where the following bound on the rank of a scattered linear set was obtained.
\begin{theorem}\label{max}\cite[Theorem 4.3]{BlLa2000}
A scattered $\F_q$-linear set in $\PG(r-1,q^t)$ has rank $\leq rt/2$.
\end{theorem}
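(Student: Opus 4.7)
The plan is to reformulate the scattered condition as a simple algebraic statement about the underlying $\F_q$-subspace, after which the bound follows from a one-line dimension count.

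First I translate the geometric hypothesis into vector-space language. A scattered $\F_q$-linear set $L=\B(U)$ of rank $k$ corresponds to an $\F_q$-subspace $U\subseteq V=\F_{q^t}^r$ with $\dim_{\F_q}U=k$ such that, for every $v\in V\setminus\{0\}$, the intersection $U\cap S_v$ (where $S_v=\{\alpha v:\alpha\in\F_{q^t}\}$ is the spread element through $v$) is either $\{0\}$ or a single $\F_q$-line $\F_q w$. This is just the translation of the scattered hypothesis on $\pi=\PG(U)$ via field reduction.

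The key step is the following reformulation: $U$ is scattered if and only if $U\cap\alpha U=\{0\}$ for every $\alpha\in\F_{q^t}\setminus\F_q$. Indeed, if some nonzero $w\in U\cap\alpha U$ exists with $\alpha\notin\F_q$, writing $w=\alpha u$ for some $u\in U\setminus\{0\}$ produces two $\F_q$-linearly independent vectors $u$ and $w=\alpha u$ inside $U\cap S_u$ (independent because $\alpha\notin\F_q$), violating scattered. Conversely, any two $\F_q$-linearly independent vectors $u,u'$ in $U\cap S_v$ are related by $u'=\alpha u$ for some $\alpha\in\F_{q^t}\setminus\F_q$, and then $u'\in U\cap\alpha U$ is nonzero.

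With this reformulation in hand, the bound is immediate. Pick any $\alpha\in\F_{q^t}\setminus\F_q$ (possible as we may assume $t\geq 2$; the case $t=1$ is vacuous). Scalar multiplication by $\alpha$ is an $\F_q$-linear bijection of $V$, so $\dim_{\F_q}(\alpha U)=k$. Since $U\cap\alpha U=\{0\}$, the sum is direct and
\[
2k=\dim_{\F_q}U+\dim_{\F_q}(\alpha U)=\dim_{\F_q}(U+\alpha U)\leq\dim_{\F_q}V=rt,
\]
which gives $k\leq rt/2$. There is no real obstacle here: the whole proof rests on the observation that the scattered condition is precisely the statement that $U$ is disjoint from each of its $\F_{q^t}^*$-translates outside the $\F_q^*$-stabilizer, a fact that is entirely natural once the action of $\F_{q^t}^*$ on the ambient $\F_q$-space $V$ is made visible.
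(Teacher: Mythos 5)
The paper states this theorem without proof, citing \cite{BlLa2000}, so there is no internal proof to compare against; your argument is correct and is in fact essentially the original one of Blokhuis and Lavrauw: scatteredness of $U$ forces $U\cap\alpha U=\{0\}$ for every $\alpha\in\F_{q^t}\setminus\F_q$, and the resulting direct sum $U\oplus\alpha U$ inside $\F_q^{rt}$ gives $2k\le rt$. The only quibble is your parenthetical on $t=1$: that case is not vacuous but genuinely excluded (for $t=1$ every subspace is scattered and the full space has rank $r>r/2$), so the standing assumption $t\ge 2$, implicit throughout the paper, is actually needed for the statement to hold.
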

Scattered linear sets that meet this bound are called {\em maximum scattered}. Maximum linear sets are related to interesting geometric objects such as two-weight codes, two-intersection sets and strongly regular graphs (see \cite{Lavrauw2001}). The connection with pseudoreguli will be explained in Section \ref{pseudo}, and the for the connection with particular classes of semifields (see Section \ref{S6}) we refer to \cite{MaPoTr2007} and more recent \cite{LaMaPoTr2013}.

We have the following useful lemma for linear sets.

\begin{lemma} \label{Desar} Let $\D$ be the Desarguesian $(t-1)$-spread of $\PG(rt-1,q)$. Let $\B(\pi)$ be a linear set of rank $k+1$, where $\pi$ is a $k$-dimensional space. For every point $R$ in $\PG(rt-1,q)$, contained in an element of $\B(\pi)$, there is a $k$-dimensional space $\pi'$, through $R$, such that $\B(\pi)=\B(\pi')$.
\end{lemma}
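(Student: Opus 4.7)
The plan is to exploit the natural $\F_{q^t}^*$-scalar action on $V=\F_{q^t}^r$, which, when viewed as an $\F_q$-linear action on the $rt$-dimensional $\F_q$-vector space, fixes every element of the Desarguesian spread $\D$ setwise while acting transitively on the points inside each spread element. This will let us move any chosen point of $S\cap \pi$ to $R$ by a collineation that leaves $\B(\pi)$ invariant.

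More concretely, the first step is to locate the spread element $S\in\D$ containing $R$. Since $R$ lies in an element of $\B(\pi)$, we have $S\in \B(\pi)$, i.e.\ $S\cap \pi\neq\emptyset$, so we can pick a point $P\in S\cap \pi$. Write $S=\FR(\F_{q^t}v)=\{\alpha v:\alpha\in\F_{q^t}\}$ for some nonzero $v\in V$, and write $P=\F_q(\alpha v)$, $R=\F_q(\beta v)$ for suitable $\alpha,\beta\in\F_{q^t}^*$.

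The next step is to introduce, for each $\lambda\in\F_{q^t}^*$, the scalar map $\phi_\lambda:V\to V:u\mapsto\lambda u$. Because $\F_q\subseteq\F_{q^t}$, this is an $\F_q$-linear bijection, hence an element of $\GL(rt,q)$. Since $\phi_\lambda(\F_{q^t}u)=\F_{q^t}u$ for every $u\in V$, the induced collineation of $\PG(rt-1,q)$ fixes every element of $\D$ setwise. Setting $\lambda=\beta\alpha^{-1}$ gives $\phi_\lambda(P)=R$, and we define $\pi':=\phi_\lambda(\pi)$. Then $\pi'$ is a $k$-dimensional subspace of $\PG(rt-1,q)$ containing $\phi_\lambda(P)=R$.

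Finally, I would verify that $\B(\pi')=\B(\pi)$. For any spread element $S'\in\D$, the fact that $\phi_\lambda$ fixes $S'$ setwise yields
\[
S'\cap \pi' \;=\; S'\cap \phi_\lambda(\pi) \;=\; \phi_\lambda\bigl(\phi_\lambda^{-1}(S')\cap \pi\bigr) \;=\; \phi_\lambda(S'\cap \pi),
\]
so $S'\cap\pi'\neq\emptyset$ if and only if $S'\cap\pi\neq\emptyset$, i.e.\ $\B(\pi')=\B(\pi)$. There is no real obstacle here: the only thing worth double-checking is that the $\F_{q^t}^*$-scalar action is indeed $\F_q$-linear and acts transitively on the projective points of a single spread element, both of which are immediate from the definition of $\FR$.
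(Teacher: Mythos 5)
Your proof is correct and follows essentially the same route as the paper: the paper's proof uses exactly the scalar collineations $\varphi_\omega:\F_q x\mapsto \F_q\omega x$ (your $\phi_\lambda$), observes that they fix every spread element setwise and act transitively on the points of a fixed spread element, and takes $\pi'=\pi^{\varphi_\omega}$ for the appropriate $\omega$. The only cosmetic difference is that the paper first notes that all Desarguesian spreads are equivalent so one may assume $\D=\D_{r,t,q}$, a reduction you make implicitly by working directly with $\FR$.
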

\begin{proof} Since all Desarguesian spreads are equivalent, we may assume 
$\D=\D_{r,t,q}$, the image of the set of points of $\PG(r-1,q^t)$ under the field reduction map $\FR$. Let $\varphi_\omega$, for $\omega \neq 0$, be the collineation of $\PG(rt-1,q)$ mapping a point $\F_q x$ of $\PG(rt-1,q)$ to $\F_q\omega x$.
Then $\varphi_\omega$ fixes each element of $\D_{r,t,q}$ since $\mathbb{F}_{q^t} x=\mathbb{F}_{q^t} \omega x$. Moreover, the set 
$$\{\F_q \omega x~:~\omega \in \F_{q^t}\setminus \{0\}\}$$ consists of the $(q^t-1)/(q-1)$ different points of $\B(\F_qx)$.

Let $R$ be a point contained in an element $\FR(P)$ of $\B(\pi)$, and let $T$ be a point in $\pi\cap \FR(P)$. It follows from the previous part that $R=T^{\varphi_\omega}$ for some $\omega \in \F_{q^t}$.

If $\F_q z \in \pi$, then $(\F_q z)^{\varphi_\omega}=\F_q \omega z \in \B(\F_q z) \in \B(\pi)$, and hence
$\B(\pi^{\varphi_\omega})\subset \B(\pi)$.
Since $\varphi_\omega$ is a collineation $\B(\pi^{\varphi_\omega})= \B(\pi)$.
\end{proof}

From this lemma, we have for every point $R$ in $\PG(rt-1,q)$, contained in an element of $\B(\pi)$, where $\pi$ is $(k-1)$-dimensional, there is a $(k-1)$-dimensional space $\pi'$, through $R$, such that $\B(\pi)=\B(\pi')$. This raises an important question: how many different subspaces $\pi'$ of dimension $(k-1)$ are there through a fixed point $R$ such that $\B(\pi')=\B(\pi)$? If $\B(\pi)$ is a regulus, this means $\pi$ is a line, then it is clear that through every point of an element of $\B(\pi)$, there is exactly one line $\pi'$ such that $\B(\pi')=\B(\pi)$, because through every point of a regulus, there exists a unique transversal line to this regulus. In Theorem \ref{raar} we will see that the answer to this question is not always equal to one. Some cases are well understood, but in general, this question remains open.

\subsection{Linear sets and projections of subgeometries}
It is clear from the definition (or from the link with Segre varieties described in Section \ref{segre}) that a subgeometry is a linear set, but a linear set is not necessarily a subgeometry. However, the following theorem by Lunardon and Polverino shows that every linear set is a projection of a subgeometry. For the particular case of linear {\em blocking sets}, this was proven in \cite{polito2}, for the case of { scattered linear sets}, but not using this terminology, it was shown already in 1981 in \cite{Limbos}.

Let $\Sigma=\PG(k-1,q)$ be a subgeometry of $\Sigma^\ast=\PG(k-1,q^t)$ and suppose there exists an $(k-r-1)$-dimensional subspace $\Omega^\ast$ of $\Sigma^\ast$ disjoint from $\Sigma$. Let $\Omega=\PG(r-1,q^t)$ be an $(r-1)$-dimensional subspace of $\Sigma^\ast$ disjoint from $\Omega^\ast$. Let $p_{\Omega^\ast,\Omega}$ denote the \index{Projection}  projection map defined by $x\mapsto \langle \Omega^\ast,x\rangle \cap \Omega$ for each point $x \in \Sigma^\ast \setminus \Omega^\ast$. The point set $\Gamma=p_{\Omega^\ast,\Omega}(\Sigma)$, i.e., the image of $\Sigma$ under the projection map $p_{\Omega^\ast,\Omega}$ is simply called the {\em projection} of $\Sigma$ from $\Omega^\ast$ into $\Omega$.

\begin{theorem}{ \cite[Theorem 1 and 2]{LP}}\label{projectie} If $\Gamma$ is a projection of $\PG(k-1,q)$ into $\Omega=\PG(r-1,q^t)$ with $k\geq r$, then $\Gamma$ is an $\mathbb{F}_q$-linear set of rank $k$ and $\langle \Gamma \rangle=\Omega$. Conversely, if $L$ is an $\F_q$-linear set of $\Omega$ of rank $k$ and $\langle L \rangle=\Omega=\PG(r-1,q^t)$, then either $L$ is a canonical subgeometry of $\Omega$ or there are a $(k-r-1)$-dimensional subspace $\Omega^\ast$ of $\Sigma^\ast=\PG(k-1,q^t)$ disjoint from $\Omega$ and a canonical 
 subgeometry $\Sigma$ of $\Sigma^\ast$ disjoint from $\Omega^\ast$ such that $L=p_{\Omega^\ast,\Omega}(\Sigma)$.
\end{theorem}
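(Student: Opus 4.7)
The plan is to translate everything to the vector-space level and let the geometry follow from elementary linear algebra over the two fields $\F_q \subset \F_{q^t}$. The key observation is that a canonical subgeometry of $\PG(k-1,q^t)$ is the image under $\B$ of an $\F_q$-form of the ambient $\F_{q^t}$-space, while a projection from $\Omega^\ast$ onto $\Omega$ is induced by an $\F_{q^t}$-linear projection along the subspace underlying $\Omega^\ast$ onto the one underlying $\Omega$. Once this dictionary is in place, both directions become a matter of tracking an $\F_q$-subspace through a projection map.

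For the forward direction, let $W^\ast$ be the $\F_{q^t}$-vector space underlying $\Sigma^\ast=\PG(k-1,q^t)$, and write $W^\ast = W_1 \oplus W_2$ where $\PG(W_1)=\Omega^\ast$ and $\PG(W_2)=\Omega$ (this is a genuine direct sum because $\Omega^\ast$ and $\Omega$ have complementary projective dimensions and are disjoint). Let $\pi: W^\ast \to W_2$ be the projection with kernel $W_1$; then $p_{\Omega^\ast,\Omega}$ is the projectivization of $\pi$. Let $U^\ast \subset W^\ast$ be the $\F_q$-subspace of dimension $k$ giving the canonical subgeometry $\Sigma$, so that $\Sigma = \B(U^\ast)$. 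Since $\Sigma$ and $\Omega^\ast$ are disjoint, $U^\ast \cap W_1 = \{0\}$, so $\pi$ restricts to an $\F_q$-linear injection on $U^\ast$. Hence $\Gamma = p_{\Omega^\ast,\Omega}(\Sigma) = \B(\pi(U^\ast))$ is an $\F_q$-linear set of rank $k$. Moreover, because $U^\ast$ spans $W^\ast$ over $\F_{q^t}$ (it contains an $\F_{q^t}$-basis), $\pi(U^\ast)$ spans $W_2$ over $\F_{q^t}$, giving $\langle\Gamma\rangle=\Omega$.

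For the converse, write $L=\B(U)$ with $U\subset V=\F_{q^t}^r$ an $\F_q$-subspace of dimension $k$. The spanning condition $\langle L\rangle=\Omega$ says $U$ $\F_{q^t}$-spans $V$. If $k=r$ then $U$ is an $\F_q$-form of $V$ and $L$ is a canonical subgeometry. Otherwise, take an $\F_q$-basis $u_1,\dots,u_k$ of $U$, let $W^\ast=\F_{q^t}^k$ with standard basis $e_1,\dots,e_k$, and define $\phi: W^\ast\to V$ by $\phi(e_i)=u_i$, extended $\F_{q^t}$-linearly. Since $U$ $\F_{q^t}$-spans $V$, $\phi$ is surjective with kernel $K$ of $\F_{q^t}$-dimension $k-r$. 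Choose any $\F_{q^t}$-complement $V'$ of $K$ in $W^\ast$; then $\phi|_{V'}$ is an isomorphism onto $V$. Set $\Sigma^\ast=\PG(W^\ast)$, $\Omega^\ast=\PG(K)$, identify $\Omega=\PG(V)$ with $\PG(V')$, and let $\Sigma$ be the canonical subgeometry of $\Sigma^\ast$ attached to $e_1,\dots,e_k$. The $\F_q$-linear independence of $u_1,\dots,u_k$ forces $\phi$ to be injective on the $\F_q$-span of the $e_i$'s, i.e.\ $\F_q^k\cap K=\{0\}$, so $\Sigma\cap\Omega^\ast=\emptyset$. Finally, for $x=\sum\alpha_i e_i$ with $\alpha_i\in\F_q$, projecting $\F_{q^t}x$ from $\Omega^\ast$ onto $\Omega$ yields the point corresponding to $\phi(x)=\sum\alpha_i u_i\in U$, so $p_{\Omega^\ast,\Omega}(\Sigma)=\B(U)=L$.

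The only nontrivial bookkeeping is in the converse, where one must verify that the ambient space $\Sigma^\ast$, the center $\Omega^\ast$, and the subgeometry $\Sigma$ can be chosen to be mutually in general position and that $\Sigma$ is canonical with respect to some frame. I expect this disjointness check to be the main potential snag, but it reduces cleanly to the $\F_q$-linear independence of a chosen $\F_q$-basis of $U$, which is free; the rest amounts to reading off the projection in coordinates.
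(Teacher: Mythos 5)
Your argument is correct. Note that the paper does not prove this statement itself---it is quoted from Lunardon and Polverino---so there is no in-paper proof to compare against; but your vector-space translation (a canonical subgeometry is $\B$ of the $\F_q$-span of an $\F_{q^t}$-basis, and $p_{\Omega^\ast,\Omega}$ is the projectivisation of the $\F_{q^t}$-linear projection along the subspace underlying $\Omega^\ast$) is exactly the standard route, and both directions go through as you describe. The one point worth making explicit is in the converse: the statement requires $\Omega$ to sit inside $\Sigma^\ast$ as the target of the projection, whereas your construction projects onto a chosen complement $V'$ of $K$ and then identifies $\PG(V')$ with $\Omega$ via $\phi|_{V'}$. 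This is harmless---that identification is precisely the embedding of $\Omega$ into $\Sigma^\ast$ which the statement presupposes---but you should say so, for instance by transporting $U$ to $(\phi|_{V'})^{-1}(U)\subseteq V'$ and observing that under this embedding $L$ corresponds to $\B\bigl((\phi|_{V'})^{-1}(U)\bigr)=p_{\Omega^\ast,\PG(V')}(\Sigma)$. Your two disjointness checks ($U^\ast\cap W_1=\{0\}$ from $\Sigma\cap\Omega^\ast=\emptyset$ in the forward direction, and $\F_q e_1+\cdots+\F_q e_k$ meeting $K$ trivially from the $\F_q$-independence of the $u_i$ in the converse) are the right ones and are complete, as is the observation that injectivity of $\pi$ on $U^\ast$ gives rank $k$ while injectivity on the points of $\Sigma$ is neither needed nor claimed.
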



\begin{corollary} The set  $\B(\pi)$ of elements of $\D_{r,t,q}$, where $\pi$ is a $(k-1)$-dimensional space in $\overline{\Omega}=\PG(rt-1,q)$ is the projection of one of the two systems of a Segre variety $\S_{k-1,t-1}$ from a $(kt-rt-1)$-dimensional space $\overline{\Omega^\ast}$ skew from $\S_{k-1,t-1}$ and $\overline{\Omega}$ and vice versa.
\end{corollary}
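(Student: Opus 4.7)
My plan is to obtain the corollary as the field-reduced shadow of the Lunardon--Polverino theorem (Theorem \ref{projectie}), using Theorem \ref{segrevariety} to match canonical subgeometries with systems of Segre varieties, and exploiting that the field reduction map commutes with span and intersection by Lemma \ref{lemma:field_reduction_map}(vi)--(vii).

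Given $L=\B(\pi)\subseteq \PG(r-1,q^t)$, Theorem \ref{projectie} supplies a canonical subgeometry $\Sigma\cong\PG(k-1,q)$ of $\Sigma^{\ast}=\PG(k-1,q^t)$, a $(k-r-1)$-space $\Omega^{\ast}\subset\Sigma^{\ast}$ disjoint from $\Sigma$, and an $(r-1)$-space $\Omega$ disjoint from $\Omega^{\ast}$, with $L=p_{\Omega^{\ast},\Omega}(\Sigma)$. Applying field reduction to all of $\Sigma^{\ast}$ (abusing the notation $\FR$ for the analogous map from $\PG(k-1,q^t)$ to $\PG(kt-1,q)$) lands us in an ambient $\PG(kt-1,q)$ carrying the Desarguesian spread $\D_{k,t,q}$, in which $\overline{\Omega^{\ast}}:=\FR(\Omega^{\ast})$ has dimension $(k-r)t-1=kt-rt-1$, $\overline{\Omega}:=\FR(\Omega)$ has dimension $rt-1$, and by Theorem \ref{segrevariety} the image $\FR(\Sigma)$ is the system of $(t-1)$-spaces of a Segre variety $\S_{k-1,t-1}$. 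Skewness of $\overline{\Omega^{\ast}}$ from $\overline{\Omega}$ and from $\S_{k-1,t-1}$ is read off from Lemma \ref{lemma:field_reduction_map}(vi) applied to $\Omega^{\ast}\cap\Omega=\emptyset$ and $\Omega^{\ast}\cap\Sigma=\emptyset$.

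The heart of the proof is the identity, valid for every $x\in\Sigma$,
\[
\bigl\langle \overline{\Omega^{\ast}}, \FR(x)\bigr\rangle \cap \overline{\Omega} \;=\; \FR\bigl(\langle \Omega^{\ast}, x\rangle \cap \Omega\bigr) \;=\; \FR\bigl(p_{\Omega^{\ast},\Omega}(x)\bigr),
\]
obtained from parts (vii) and (vi) of Lemma \ref{lemma:field_reduction_map}: projection commutes with field reduction. Letting $x$ range over $\Sigma$, the projection of the Segre system $\FR(\Sigma)$ from $\overline{\Omega^{\ast}}$ into $\overline{\Omega}$ is exactly $\FR(L)=\B(\pi)$, after identifying the Desarguesian spread $\D_{r,t,q}$ on $\overline{\Omega}$ with the restriction of $\D_{k,t,q}$ to the spread-compatible subspace $\overline{\Omega}$.

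For the converse I would reverse the same chain of equalities: realise the given system of $\S_{k-1,t-1}$ as $\FR(\Sigma)$ for some canonical subgeometry via Theorem \ref{segrevariety}, choose the $\mathbb{F}_{q^t}$-structure on $\PG(kt-1,q)$ so that $\overline{\Omega}$ is a union of $\D_{k,t,q}$-spread elements, and then apply Theorem \ref{projectie} to conclude that $p_{\Omega^{\ast},\Omega}(\Sigma)$ is a linear set of the form $\B(\pi)$. The main obstacle I anticipate is not geometric but bookkeeping: verifying that the two Desarguesian spreads (the $\D_{r,t,q}$ on $\overline{\Omega}$ and the $\D_{k,t,q}$ on the ambient) are compatible on $\overline{\Omega}$; once that compatibility is arranged, every dimension count and intersection statement collapses to a direct application of Lemma \ref{lemma:field_reduction_map}.
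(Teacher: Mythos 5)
Your proposal is correct and takes essentially the same route as the paper, whose entire proof reads ``Apply field reduction to the spaces $\Omega^\ast$, $\Sigma^\ast$ and $\Sigma$ in Theorem \ref{projectie} and use Theorem \ref{segrevariety}.'' You have merely made explicit the details the paper suppresses: the dimension count $(k-r)t-1=kt-rt-1$, the skewness statements via Lemma \ref{lemma:field_reduction_map}(vi), and the fact that projection commutes with field reduction via parts (vi)--(vii).
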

\begin{proof}
Apply field reduction to the spaces $\Omega^\ast, \Sigma^\ast$ and $\Sigma$ in Theorem \ref{projectie} and use Theorem \ref{segrevariety}.\end{proof}

\bigskip

In the previous corollary, we have seen that $\B(\pi)$ is a projection of a Segre variety (this projection is not necessarily injective). Projections of Segre varieties are studied by Zanella in \cite{corrado}, where he shows that every embedded {\em product space} is the injective projection of a Segre variety. In \cite{LSZ}, the authors investigate the embedding of the product space $\PG(n-1,q)\times \PG(n-1,q)$ in $\PG(2n-1,q)$ and show that $\B(W)$, where $W$ is a scattered subspace of rank $n$ is an embedding of the product space $\PG(n-1,q)\times \PG(n-1,q)$. This embedding is of course covered by two systems of $(n-1)$-dimensional subspaces. However, they prove that $\B(W)$ contains $n$ systems of $(n-1)$-dimensional subspaces, and hence for $n>2$, contrary to what one might expect, there exist systems of maximum subspaces which are not the image of maximum subspaces of the Segre variety.

\bigskip

\subsection{The equivalence of linear sets}
A very natural question for linear sets is that of {\em equivalence}. We say that two sets $S_1$ and $S_2$ of points in $\PG(n,q^t)$ are $\PGammaL$-equivalent (resp. $\PGL$-equivalent) if there is an element $\phi$ in $\PGammaL(n+1,q^t)$ (resp. $\PGL(n+1,q^t)$) such that $\phi(S_1)=S_2$. In the previous section, we have seen that a linear set can be seen as the projection of a subgeometry. Subgeometries of the same order (embedded in the same projective space) are always $\PGL$-equivalent, but the equivalence problem for projections of subgeometries turns out to be quite hard. The following theorem  shows how the equivalence of linear sets, obtained as the projection of a subgeometry, can be translated into the equivalence of the spaces we are projecting from. For the particular case of $\F_q$-linear sets of rank $n+1$ in $\PG(2,q^n)$ (which is the case of linear {blocking sets}) this was proven in \cite{bonoli}.

\begin{theorem}\label{thm:equivalence} \cite[Theorem 3]{lavrauw}
Let $S_i$ be the $\F_q$-linear set of rank $r$ in $\PG(n-1,q^t)$, defined as the projection of $\Sigma_i\cong \PG(r-1,q)$ in $\Sigma^\ast/\Omega_i^\ast$, where $\langle \Sigma_i\rangle=\Sigma^*\cong \PG(r-1,q^t)$, $i=1,2,$ and suppose that $S_i$ is not a linear set of rank $s$ with $s<r$.
The following statements are equivalent.
\begin{itemize}
\item[(i)] There exists an element $\alpha \in {\mathrm{P\Gamma L}}(n,q^t)$ such that $S_1^\alpha=S_2$.
\item[(ii)] There exists an element $\beta \in \Aut(\Sigma^*)$ such that $\Sigma_1^\beta = \Sigma_2$ and $(\Omega_1^\ast)^\beta=\Omega_2^\ast$.
\item[(iii)] For all subgeometries $\Sigma\cong \PG(r-1,q)$ in $\Sigma^\ast$, skew to $\Omega_1^\ast$ and $\Omega_2^\ast$, there exist elements $\delta, \varphi,\psi \in \Aut(\Sigma^\ast)$, such that $\Sigma^\delta =\Sigma$ and $(\Omega_1^\ast)^{ \varphi\delta}=(\Omega_2^\ast)^\psi$, $\Sigma_1^ \varphi=\Sigma$ and $\Sigma_2^\psi=\Sigma$.
\end{itemize}
\end{theorem}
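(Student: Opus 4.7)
My plan is to establish $(ii)\Leftrightarrow(iii)$, $(ii)\Rightarrow(i)$, and $(i)\Rightarrow(ii)$; the implication $(i)\Rightarrow(ii)$ is the main obstacle and is where the minimality-of-rank hypothesis on $S_i$ is used.

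For $(ii)\Leftrightarrow(iii)$, the key input is that any two canonical subgeometries of $\Sigma^\ast$ of order $q$ are $\Aut(\Sigma^\ast)$-equivalent. Given (ii), I would pick any auxiliary canonical $\Sigma$ skew to $\Omega_1^\ast$ and $\Omega_2^\ast$, choose $\varphi,\psi\in\Aut(\Sigma^\ast)$ with $\Sigma_1^\varphi=\Sigma$ and $\Sigma_2^\psi=\Sigma$, and then produce $\delta$ as an element of the stabiliser of $\Sigma$ that relates the three translates $(\Omega_1^\ast)^\varphi$, $(\Omega_2^\ast)^\psi$, and the trace of $\beta$ on $\Sigma$. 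Conversely, given (iii), the composition $\beta:=\psi^{-1}\delta^{-1}\varphi$ (or an analogous combination) is an element of $\Aut(\Sigma^\ast)$ satisfying $\Sigma_1^\beta=\Sigma_2$ and $(\Omega_1^\ast)^\beta=\Omega_2^\ast$.

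For $(ii)\Rightarrow(i)$, I would transport $\beta$ through the projections. Specifically, set
\[
\alpha\bigl(p_{\Omega_1^\ast,\Omega}(y)\bigr) := p_{\Omega_2^\ast,\Omega}\bigl(\beta(y)\bigr),\qquad y\in\Sigma^\ast\setminus\Omega_1^\ast.
\]
Because $\beta(\Omega_1^\ast)=\Omega_2^\ast$, the collineation $\beta$ sends fibres of $p_{\Omega_1^\ast,\Omega}$ bijectively to fibres of $p_{\Omega_2^\ast,\Omega}$, so $\alpha$ is a well-defined map on $\Omega$; restricting to a transversal complement exhibits $\alpha$ as a composition of semilinear maps, hence $\alpha\in\PGammaL(n,q^t)$. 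The identity $\beta(\Sigma_1)=\Sigma_2$ then yields $\alpha(S_1)=\alpha(p_{\Omega_1^\ast,\Omega}(\Sigma_1))=p_{\Omega_2^\ast,\Omega}(\Sigma_2)=S_2$.

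The hard direction is $(i)\Rightarrow(ii)$. My plan is to lift $\alpha$ to $\Sigma^\ast$ via the tentative prescription $\beta(y)\in p_{\Omega_2^\ast,\Omega}^{-1}\bigl(\alpha(p_{\Omega_1^\ast,\Omega}(y))\bigr)\cap\Sigma_2$ for $y\in\Sigma_1$, extend $\F_q$-linearly to all of $\Sigma_1$, and then promote to an element of $\Aut(\Sigma^\ast)$ whose action on $\Omega_1^\ast$ is thereby forced to send $\Omega_1^\ast$ to $\Omega_2^\ast$. The obstruction is that without further hypotheses the preimage $p_{\Omega_2^\ast,\Omega}^{-1}(\cdot)$ is an $(r-n)$-subspace whose intersection with $\Sigma_2$ need not be a single point, so a priori different choices could yield different, and even non-semilinear, maps. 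The minimality-of-rank hypothesis is precisely what is needed to control this: it guarantees that any two realisations of $S_i$ as the projection of a rank-$r$ canonical subgeometry from an $(r-n-1)$-space are related by an element of $\Aut(\Sigma^\ast)$, so that, after possibly composing with a stabiliser of $S_2$, a coherent semilinear lift exists. This produces the desired $\beta\in\Aut(\Sigma^\ast)$ and establishes (ii).
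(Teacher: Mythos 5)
The survey itself gives no proof of this statement (it is quoted verbatim from \cite[Theorem 3]{lavrauw}), so I am measuring your proposal against what a complete proof must contain. Your treatment of $(ii)\Leftrightarrow(iii)$ and of $(ii)\Rightarrow(i)$ is essentially right: the equivalence of (ii) and (iii) really is bookkeeping with the transitivity of $\Aut(\Sigma^\ast)$ on canonical subgeometries of order $q$ spanning $\Sigma^\ast$ (for $(ii)\Rightarrow(iii)$ you may even take $\delta$ to be the identity and $\varphi=\beta\psi$), and for $(ii)\Rightarrow(i)$ the observation that $\beta$ carries fibres of $p_{\Omega_1^\ast,\Omega}$ to fibres of $p_{\Omega_2^\ast,\Omega}$ does produce a well-defined element of $\PGammaL(n,q^t)$ mapping $S_1$ to $S_2$. (Minor point: for $(iii)\Rightarrow(ii)$ you should also note that at least one canonical subgeometry skew to both $\Omega_1^\ast$ and $\Omega_2^\ast$ exists, otherwise (iii) is vacuous.)

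The gap is in $(i)\Rightarrow(ii)$, and it is not a small one. The statement you invoke to close it --- that the hypothesis ``$S_i$ is not a linear set of rank $s<r$'' guarantees that any two realisations of $S_i$ as the projection of a rank-$r$ canonical subgeometry from an $(r-n-1)$-space are related by an element of $\Aut(\Sigma^\ast)$ --- is essentially the entire content of the hard direction, and you assert it rather than prove it; as written the argument is circular. Note also that the minimality hypothesis does not repair the well-definedness of your pointwise lift: a point of $S_2$ of weight $w>1$ has a fibre meeting $\Sigma_2$ in a subgeometry $\PG(w-1,q)$, and linear sets of non-minimal rank can certainly have such points (a club of rank $3$ in $\PG(1,q^t)$ already does), so the ambiguity you identify persists under the hypothesis. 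What the hypothesis actually excludes is that the projection factors through a proper subgeometry of $\Sigma_1$ (equivalently, that $S_i$ admits a lower-rank realisation); to pass from there to the equivalence of the two projection data $(\Sigma_1^{\hat\alpha},(\Omega_1^\ast)^{\hat\alpha})$ and $(\Sigma_2,\Omega_2^\ast)$, where $\hat\alpha$ is a lift of $\alpha$ to $\Aut(\Sigma^\ast)$ stabilising $\Omega$, one still needs a genuine reconstruction argument --- recovering the vertex and the subgeometry from the point set $S_2$ via the fibres over its points and the sublines they determine. That argument is missing, and supplying it is where all the work of the theorem lies.
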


In this way, instead of studying the equivalence of linear sets directly, one can study the stabiliser in $\PGammaL(r,q^t)$ of a subgeometry $\PG(r-1,q)$ in $\PG(r-1,q^t)$: orbits of this group on subspaces of $\PG(r-1,q^t)$ are in one to one correspondence with $\PGammaL$-equivalence classes of the linear sets obtained by projecting from these subspaces.

For the particular case of linear sets of rank $3$ in $\PG(1,q^t)$, this reduces to the study of the orbits on points outside $\pi$ of the stabiliser of a subplane $\pi\cong \PG(2,q)$ in $\PG(2,q^t)$. Let us denote a linear set of size $q^2+1$ in $\PG(1,q^t)$ by a {\em club}. A scattered linear set of rank $3$ is a linear set containing $q^2+q+1$ points. The equivalence problem for linear sets of rank $3$ is solved in the following theorem.

\begin{theorem}\cite[Theorem 5]{lavrauw} \label{inequivalent} \begin{itemize}
\item[(i)] All clubs in $\PG(1,q^3)$ and all scattered linear sets of rank $3$ in $\PG(1,q^3)$ are projectively equivalent. 
\item[(ii)] All scattered linear sets of rank $3$ in $\PG(1,q^4)$ are projectively equivalent.
\item[(iii)] All clubs and all scattered linear sets of rank $3$ in $\PG(1,2^5)$ are equivalent, but there exist projectively inequivalent clubs and projectively inequivalent scattered linear sets of rank $3$ in $\PG(1,2^5)$. 
\item[(iv)] In all other cases, there exist non-equivalent clubs and non-equivalent scattered linear sets of rank $3$.
\end{itemize}
\end{theorem}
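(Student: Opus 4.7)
The plan is to apply Theorem~\ref{thm:equivalence} to reduce the $\PGammaL$-equivalence problem for rank-$3$ $\F_q$-linear sets in $\PG(1,q^t)$ to an orbit problem on points of $\PG(2,q^t)$. By Theorem~\ref{projectie}, such a linear set is the projection of a canonical subgeometry $\Sigma\cong\PG(2,q)$ of $\Sigma^{\ast}=\PG(2,q^t)$ from a centre $\Omega^{\ast}$ which, since $k-r-1=0$ here, is a single point $P\in\Sigma^{\ast}\setminus\Sigma$. Theorem~\ref{thm:equivalence} then tells us that two rank-$3$ linear sets are $\PGammaL$-equivalent if and only if their projection centres lie in the same orbit of the stabiliser $G\cong\PGL(3,q)\rtimes\mathrm{Gal}(\F_{q^t}/\F_p)$ of $\Sigma$ in $\PGammaL(3,q^t)$.

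I would next split $\Sigma^{\ast}\setminus\Sigma$ into two $G$-invariant strata: points on the extension $\bar\ell$ of a (necessarily unique) $\F_q$-line $\ell$ of $\Sigma$, and points on no such line. The first stratum yields clubs, since the $q+1$ points of $\ell$ collapse under the projection to a single point of weight at least $2$, giving a linear set of size $q^{2}+1$; the second stratum yields scattered linear sets of rank~$3$. For clubs, transitivity of $\PGL(3,q)$ on the lines of $\Sigma$ lets me fix $\bar\ell$ and reduces the question to the orbits of $\mathrm{Stab}_{G}(\bar\ell)\cong\PGL(2,q)\rtimes\mathrm{Gal}(\F_{q^t}/\F_p)$ on $\PG(1,q^t)\setminus\PG(1,q)$. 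After coordinatising so that $\PG(1,q)$ is the canonical subline, sharp $3$-transitivity of $\PGL(2,q)$ implies that the $\PGL(2,q)$-orbit of a point is determined by the minimal polynomial over $\F_q$ of the corresponding element of $\F_{q^t}\setminus\F_q$ (with a small correction for quadratic elements, whose stabiliser is cyclic of order $q+1$); these orbits are then glued by the $\mathrm{Gal}(\F_{q^t}/\F_p)$-action.

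For scattered linear sets the analogous task is to count $G$-orbits on points of $\Sigma^{\ast}\setminus\Sigma$ lying on no extended $\F_q$-line. For $t=3$ and $t=4$ one shows a single orbit by direct normalisation: such a point $P$ together with its $\mathrm{Frob}_q$-conjugates spans $\Sigma^{\ast}$, so a suitable element of $\PGL(3,q)$ moves any such configuration to a fixed standard one. The case $(t,q)=(5,2)$ requires a short finite verification (easily done by hand or by computer) that $G$ is transitive on each stratum while $\PGL(3,2)$ is not, producing the $\PGammaL$/$\PGL$ dichotomy of~(iii). In every remaining case a cardinality comparison shows that $|G|$ is too small for a single orbit on either stratum, forcing~(iv). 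I expect the main obstacle to be the tight numerology around $(t,q)=(5,2)$: there $|G|$ just matches the number of eligible projection centres, so transitivity cannot be inferred from counting alone and must be checked by exhibiting explicit collineations, and the $\PGammaL$ vs.\ $\PGL$ distinction is subtle in this regime.
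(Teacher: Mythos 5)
Your overall strategy is exactly the one the paper indicates for this theorem (which it quotes from \cite{lavrauw} without giving a proof): use Theorems \ref{projectie} and \ref{thm:equivalence} to identify equivalence classes of rank-$3$ linear sets of $\PG(1,q^t)$ with orbits of the stabiliser $G\cong\PGL(3,q)\rtimes\mathrm{Gal}(\F_{q^t}/\F_p)$ of a subplane $\Sigma\cong\PG(2,q)$ on the points of $\PG(2,q^t)\setminus\Sigma$, stratified according to whether the projection centre lies on an extended $\F_q$-line (clubs) or not (scattered sets). However, two of your concrete steps fail as stated. First, the $\PGL(2,q)$-orbit of a point of $\PG(1,q^t)\setminus\PG(1,q)$ is \emph{not} determined by the minimal polynomial of the corresponding field element: the orbit of an element of degree $d\geq 3$ has size $|\PGL(2,q)|=q^3-q$ (trivial stabiliser) and so meets many minimal-polynomial classes (for $t=3$ it is all of $\F_{q^3}\setminus\F_q$), while conversely Galois conjugates, which share a minimal polynomial, need not be $\PGL(2,q)$-equivalent. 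The club analysis must instead count orbits directly from the stabiliser orders (trivial for degree $\geq 3$, cyclic of order $q+1$ for quadratic elements) and then factor in the Galois action.

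Second, the claim that ``in every remaining case a cardinality comparison shows that $|G|$ is too small for a single orbit'' is false for clubs in $\PG(1,q^4)$ with $q\in\{2,3,4\}$: there $|\PGL(2,q)|\cdot 4h$ exceeds $q^4-q$ (e.g.\ $24>14$ for $q=2$), so counting proves nothing. What rescues (iv) in these cases is a $G$-invariant partition of the club stratum: centres lying on the intermediate subline $\PG(1,q^2)$ and centres of degree $4$ over $\F_q$ form two nonempty invariant classes, each producing clubs, hence inequivalent clubs. Relatedly, your ``direct normalisation'' for (i)--(ii) needs more than ``the conjugates of $P$ span $\Sigma^\ast$'', since $\PGL(3,q)$ is far too small to be transitive on triangles of $\PG(2,q^t)$ in general position; the clean argument identifies a scattered centre $\F_{q^t}(x_0,x_1,x_2)$ with the $3$-dimensional $\F_q$-subspace $\langle x_0,x_1,x_2\rangle_{\F_q}$ of $\F_{q^t}$ up to $\F_{q^t}^\ast$-multiplication, and $\F_{q^t}^\ast$ is transitive on such subspaces precisely for $t\in\{3,4\}$; for $(t,q)=(5,2)$ the same description gives the five $\PGL$-orbits which one must then show are fused by the Galois group, yielding the dichotomy in (iii).
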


One can ask whether it is possible to translate the equivalence problem for linear sets $\B(\pi)$ and $\B(\pi')$, where $\pi$ and $\pi'$ are subspaces of $\PG(nt-1,q)$ in terms of equivalence of the subspaces $\pi$ and $\pi'$. This problem is still unsolved; we will give an idea why the `naive' approach is unsuccesful.

Let $S_1=\B(\pi_1)$ and $S_2=\B(\pi_2)$ be two $\F_q$-linear sets in $\PG(n-1,q^t)$ and let $\phi$ be an element of $\PGammaL(n,q^t)$ mapping $S_1$ onto $S_2$. For all points $P$ of $\pi_1$ of weight $1$, it is natural to define $\bar{\phi}(P)$ as the unique point $P'$ of $\pi_2$ such that $\B(P')=\phi(\B(P))$. Unfortunately, it turns out that this mapping $\bar{\phi}$ cannot always be extended to a collineation of $\PG(nt-1,q)$, as follows from the following theorem. 

\begin{theorem} \cite{lavrauw}\label{raar} Let $\B(\pi)$ be a scattered linear set of rank $3$ in $\PG(1,q^3)$, $q>4$. Let $P$ be a point of $\pi$. Then there is exactly one plane $\pi'\neq \pi$ through $P$ such that $\B(\pi)=\B(\pi')$.
\end{theorem}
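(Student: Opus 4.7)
The plan is to work in coordinates: by Theorem \ref{inequivalent}(i), all scattered linear sets of rank $3$ in $\PG(1,q^3)$ are projectively equivalent, so I may assume $\pi = \PG(U)$ with $U = \{(x,x^q) : x\in \F_{q^3}\}$. A direct computation gives $\B(\pi) = S := \{\F_{q^3}(1,z) : z^{q^2+q+1}=1\}$, whose second affine coordinates range over the group $\mu_{q^2+q+1}$ of $(q^2+q+1)$-th roots of unity. For any plane $\pi' = \PG(U')$ with $\B(\pi') = S$, the point $\F_{q^3}(0,1)$ is absent from $S$, which forces the first-coordinate projection of $U'$ to be injective, hence (by dimension) an $\F_q$-isomorphism. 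Thus $U' = \{(x,f(x))\}$ for a unique $\F_q$-linear $f(x) = a_0 x + a_1 x^q + a_2 x^{q^2}$, and $\B(U')\subseteq S$ becomes $N_{\F_{q^3}/\F_q}(f(x)/x) = 1$ for all $x\in \F_{q^3}^*$.

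The heart of the argument is a polynomial identity. Substituting $y := x^{q-1}$, which surjects $\F_{q^3}^*$ onto $\mu_{q^2+q+1}$, and clearing denominators, the norm condition rearranges to
\[
A(y)B(y)C(y) = y^{q+2}
\]
for every $y \in \mu_{q^2+q+1}$, where $A,B,C\in \F_{q^3}[y]$ are polynomials of degree at most $q+1$ with supports contained in $\{0,1,q+1\}$ and coefficients given by Frobenius powers of $a_0,a_1,a_2$. The main obstacle is a degree comparison: the product $ABC$ has degree at most $3(q+1)$, and provided $q^2+q+1 > 3q+3$, which holds for $q>4$, the identity lifts from $\mu_{q^2+q+1}$ to a genuine equality in $\F_{q^3}[y]$. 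Since $y^{q+2}$ is a power of the prime $y$, unique factorisation forces each of $A,B,C$ to be a monomial, and comparing their shared support leaves exactly one of $a_0,a_1,a_2$ nonzero. The case $a_0\neq 0$ alone makes $f$ an $\F_{q^3}$-linear map, so $\B(U')$ is a single point, and is discarded. The cases $a_1\neq 0$ and $a_2\neq 0$ yield, via the surjectivity requirement, two families
\[
\pi_\lambda = \PG(\{(x,\lambda x^q)\}) \qquad\text{and}\qquad \pi''_\mu = \PG(\{(x,\mu x^{q^2})\})
\]
parametrised by $\lambda,\mu\in \mu_{q^2+q+1}$, giving $2(q^2+q+1)$ candidate planes in all.

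To conclude, I fix a point $P = \F_q(c,c^q)\in \pi$ and count how many candidates contain $P$. The containment $P\in \pi_\lambda$ requires $\lambda c^q = c^q$, i.e.\ $\lambda = 1$, so only $\pi$ itself arises from the first family. The containment $P\in \pi''_\mu$ requires $\mu c^{q^2} = c^q$, forcing the unique value $\mu = c^{q-q^2}$; a direct check shows $N(\mu) = c^{(q-q^2)(1+q+q^2)} = c^{q-q^4} = 1$, so this $\mu$ indeed lies in $\mu_{q^2+q+1}$ and $\pi''_\mu$ is a valid plane of the second family, different from $\pi$. Hence through each point $P$ of $\pi$ there are exactly two planes with $\B = S$, namely $\pi$ and one $\pi''\neq \pi$, as claimed.
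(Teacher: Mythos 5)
The survey states this theorem without proof, citing \cite{lavrauw}, so there is no in-paper argument to measure your proposal against; I will assess it on its own terms. The core of your argument is correct. The normalisation $\B(\pi)=\{\F_{q^3}(1,z)\,:\,z^{q^2+q+1}=1\}=:S$, the parametrisation $U'=\{(x,f(x))\}$ with $f(x)=a_0x+a_1x^q+a_2x^{q^2}$ forced by $\F_{q^3}(0,1)\notin S$, the translation of the norm condition into $A(y)B(y)C(y)=y^{q+2}$ on the group $\mu_{q^2+q+1}$ via $y=x^{q-1}$, the lift to a genuine polynomial identity (note the degree bound $3q+3<q^2+q+1$ already holds for $q\ge 3$, so your argument does not actually need $q>4$), and the unique-factorisation step forcing exactly one of $a_0,a_1,a_2$ to be nonzero are all sound; so is the identification of the two families $\pi_\lambda$, $\pi''_\mu$ with $\lambda,\mu$ of norm $1$, and the count of members of each family through a point $\F_q(c,c^q)$.

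There is one gap, small but real, in the opening reduction. Theorem \ref{inequivalent}(i) asserts projective equivalence of scattered linear sets of rank $3$ as point sets of $\PG(1,q^3)$; it therefore entitles you to assume $\B(\pi)=S$, but not that $\pi$ itself is the standard plane $\PG(\{(x,x^q)\})$. After the reduction, $\pi$ could a priori be any of the $2(q^2+q+1)$ planes you classify, and in particular one of the $\pi''_\nu$, whose points have the form $\F_q(c,\nu c^{q^2})$ rather than $\F_q(c,c^q)$; your concluding count only treats the latter. The repair costs one sentence and uses only material you already have: either repeat the (equally short) count for a point of $\pi''_\nu$, which again produces exactly one plane from each family; or note that the coordinate swap $(x,y)\mapsto(y,x)$ fixes $S$ and interchanges the two families, while the scalar collineations $\varphi_\omega$ of Lemma \ref{Desar} act transitively within each family, so the stabiliser of $S$ is transitive on the planes defining it and the stronger normalisation of $\pi$ is legitimate after all. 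With that addition the proof is complete.
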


\begin{remark}
Note that the planes $\pi$ and $\pi'$ are contained in the hypersurface ${\mathcal{Q}}_{2,q}$,  which was studied in \cite{LSZ}. We refer to \cite{LSZ} for more on this hypersurface and interesting hypersurfaces associated to scattered linear sets in higher dimensions.
\end{remark}
Let $\B(\pi)$ be a scattered linear set of rank $3$ in $\PG(1,q^3)$, $q>4$ and let $P$ be a point of $\pi$. The mapping $\bar{\phi}$ corresponding to the identity element of $\PGammaL(2,q^3)$, mapping $\B(\pi)$ onto itself cannot map a line of $\pi$ through $P$ onto a line of the plane $\pi'$ through $P$ obtained in Theorem \ref{raar},  since this would imply that there are two transversal lines through $P$ to the same regulus. Hence, $\bar{\phi}$ cannot be extended to a collineation of $\PG(5,q)$.

 It can be shown that the points of a line of $\pi$ are mapped by $\bar{\phi}$ onto the points of a conic in $\pi'$; the $q^2+q+1$ conics obtained in this way form a {\em bundle of conics}.


\subsection{The intersection of linear sets}

As seen before, subgeometries provide examples of linear sets. The study of the {\em intersection} of two subgeometries started in 1980 when Bose, Freeman and Glynn determined the possibilities for the intersection of two Baer subplanes in $\PG(2,q)$ \cite{Bose2}. In 2003, Jagos, Kiss and P\'or settled the case of intersecting Baer subgeometries in $\PG(n,q)$ \cite{jagos}. The problem of the intersection of subgeometries was solved in general by Donati and Durante in 2008, \cite{DoDu2008} where they proved the following.
\begin{theorem}{ \cite[Theorem 1.3]{DoDu2008}} Let $G$ and $G'$ be two subgeometries of order $p^t$ and $p^{t'}$ respectively of $\PG(n,q)$, $q=p^h$, with $t\leq t'$ and let $m=\gcd(t,t')$. If $G\cap G'$ is non-empty, then $G\cap G'=G_1\cup \ldots \cup G_k$, with $k\leq \frac{q-1}{p^{t'}-1}$ and with $G_1, \ldots, G_k$ subgeometries of order $p^m$ of independent subspaces of $\PG(n,q)$. 
\end{theorem}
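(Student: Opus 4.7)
The plan is to model each subgeometry as an $\F_{p^t}$- or $\F_{p^{t'}}$-subspace of $V = \F_q^{n+1}$ and analyse their intersection via scaling in $\F_q^*$. Every subgeometry of $\PG(n,q)$ of order $p^t$ arises as $\B(W) := \{\F_q w : w \in W \setminus \{0\}\}$ for an $\F_{p^t}$-subspace $W \subseteq V$ of $\F_{p^t}$-dimension $n+1$ whose $\F_q$-span is $V$; I write $G = \B(W)$ and $G' = \B(W')$ analogously. A point $\F_q v$ lies in $G \cap G'$ iff some $\mu \in \F_q^*$ satisfies $v \in W \cap \mu W'$, so setting $W_\mu := W \cap \mu W'$ yields
\[
G \cap G' \;=\; \bigcup_{\mu \in \F_q^*} \B(W_\mu).
\]
Since $\nu W' = W'$ for every $\nu \in \F_{p^{t'}}^*$, the space $W_\mu$ depends only on the coset $\mu \F_{p^{t'}}^*$, giving at most $(q-1)/(p^{t'}-1)$ distinct subspaces $W_\mu$; this yields the bound on $k$.

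Each $W_\mu$, being the intersection of an $\F_{p^t}$- with an $\F_{p^{t'}}$-subspace of $V$, is naturally an $\F_{p^m}$-subspace, using $\F_{p^m} = \F_{p^t} \cap \F_{p^{t'}}$. To check that $\B(W_\mu)$ is a subgeometry of order $p^m$ of the projective subspace $\Pi_\mu := \PG(\F_q W_\mu)$ I verify that no $\F_q$-line meets $W_\mu$ in more than one $\F_{p^m}$-line: if $w, \lambda w \in W_\mu \setminus \{0\}$ with $\lambda \in \F_q^*$, then $w, \lambda w \in W$ forces $\lambda \in \F_{p^t}^*$ (as $W$ defines a subgeometry of order $p^t$), and $\mu^{-1} w, \mu^{-1} \lambda w \in W'$ forces $\lambda \in \F_{p^{t'}}^*$, whence $\lambda \in \F_{p^m}^*$. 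A parallel chase shows $\B(W_{\mu_1}) = \B(W_{\mu_2})$ exactly when $\mu_2/\mu_1 \in \F_{p^t}^* \F_{p^{t'}}^*$ and that otherwise $\B(W_{\mu_1}) \cap \B(W_{\mu_2}) = \emptyset$; hence $G \cap G'$ is a disjoint union $G_1 \cup \dots \cup G_k$ of subgeometries $G_i = \B(W_{\mu_i})$ of order $p^m$, one for each coset of $\F_{p^t}^* \F_{p^{t'}}^*$ yielding a nonzero $W_{\mu_i}$.

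The main obstacle is showing that the subspaces $\Pi_{\mu_1}, \ldots, \Pi_{\mu_k}$ are independent in $\PG(n,q)$, equivalently that the $\F_q$-subspaces $\F_q W_{\mu_i}$ are in direct sum in $V$. My plan is a Galois-descent argument using the $\F_q$-semilinear collineations $\tau$ and $\tau'$ that fix $W$ respectively $W'$ pointwise, with companion automorphisms $x \mapsto x^{p^t}$ and $x \mapsto x^{p^{t'}}$. Using $W_{\mu_i} \subseteq \mu_i W'$ one computes $\tau(w) = w$ and $\tau'(w) = \mu_i^{p^{t'}-1} w$ for every $w \in W_{\mu_i}$, so both $\tau$ and $\tau'$ preserve each $\F_q W_{\mu_i}$. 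Suppose, for contradiction, that $v_1 + \dots + v_k = 0$ is a nonzero relation with $v_i \in \F_q W_{\mu_i}$ of minimal support. Fix an $\F_{p^m}$-basis $\{w_{ij}\}_j$ of $W_{\mu_i}$ (this set is automatically $\F_q$-linearly independent in $\F_q W_{\mu_i}$ by the subgeometry property just proved), write $v_i = \sum_j \beta_{ij} w_{ij}$, and apply $\tau$: the relation $\sum_i (v_i - \tau(v_i)) = 0$ is supported inside the support of the original, so minimality forces $\tau(v_i) = v_i$, which by $\F_q$-linear independence of the $w_{ij}$ gives $\beta_{ij}^{p^t} = \beta_{ij}$, i.e.\ $\beta_{ij} \in \F_{p^t}$. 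Applying $\tau'$ in the same way yields $\beta_{ij} = \mu_i^{p^{t'}-1} \beta_{ij}^{p^{t'}}$, hence $(\mu_i \beta_{ij})^{p^{t'}-1} = 1$ and $\mu_i \beta_{ij} \in \F_{p^{t'}}^*$. For every $i$ in the support one can choose $j$ with $\beta_{ij} \neq 0$, so $\mu_i = (\mu_i \beta_{ij}) \beta_{ij}^{-1} \in \F_{p^{t'}}^* \F_{p^t}^*$; all such $\mu_i$ then lie in the trivial coset of $\F_{p^t}^* \F_{p^{t'}}^*$, contradicting their distinctness as soon as $k \geq 2$.
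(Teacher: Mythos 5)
The paper offers no proof of this statement --- it is quoted verbatim from Donati and Durante \cite{DoDu2008} --- so your argument can only be judged on its own merits. Your architecture is sound and close in spirit to the original: writing $G=\B(W)$, $G'=\B(W')$ for an $\F_{p^t}$- and an $\F_{p^{t'}}$-subspace of $V=\F_q^{n+1}$, decomposing $G\cap G'=\bigcup_\mu \B(W\cap\mu W')$, counting cosets of $\F_{p^{t'}}^*$ to obtain $k\leq (q-1)/(p^{t'}-1)$, and separating the pieces via cosets of $\F_{p^t}^*\F_{p^{t'}}^*$ are all correct and well executed.

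There are, however, two genuine gaps, both in the independence part. First, the parenthetical claim that an $\F_{p^m}$-basis of $W_{\mu_i}$ is automatically $\F_q$-linearly independent ``by the subgeometry property just proved'' does not follow: what you proved is only that every point of $\B(W_{\mu_i})$ has weight one, and a scattered subspace need not have an $\F_q$-independent basis (a maximum scattered $\F_q$-linear set of rank $4$ in $\PG(1,q^4)$ is a counterexample); without this, $\B(W_{\mu_i})$ is merely a scattered linear set, not yet a subgeometry of order $p^m$ of $\Pi_{\mu_i}$. Second, in the descent step the relation $\sum_i(v_i-\tau(v_i))=0$ is supported inside the support of $\sum_i v_i=0$ but not necessarily \emph{properly} inside --- if no $\beta_{ij}$ happens to lie in $\F_{p^t}$, the support does not shrink at all --- so minimality of the support over $i$ forces nothing. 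Both gaps are repaired by one refinement of your own technique: take a nontrivial $\F_q$-relation among the whole collection $\{w_{ij}\}_{i,j}$ with the minimal number of nonzero coefficients, normalise $\beta_{i_0j_0}=1$, and subtract from it its image under $\tau$ (respectively, subtract the $\tau'$-image from $\mu_{i_0}^{p^{t'}-1}$ times the original). The $(i_0,j_0)$-coefficient then genuinely vanishes, minimality yields $\beta_{ij}\in\F_{p^t}$ and $(\mu_i\beta_{ij}/\mu_{i_0})^{p^{t'}-1}=1$, hence $\mu_i/\mu_{i_0}\in\F_{p^t}^*\F_{p^{t'}}^*$ for every index in the support, and your coset contradiction finishes the proof. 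This single argument establishes simultaneously that each $\F_{p^m}$-basis is $\F_q$-independent (so each $G_i$ really is a subgeometry of order $p^m$ of $\Pi_{\mu_i}$) and that the subspaces $\F_qW_{\mu_1},\ldots,\F_qW_{\mu_k}$ are in direct sum.
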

They also showed the converse:

\begin{theorem}{ \cite[Theorem 1.4]{DoDu2008}}\label{subgeom}
Let $t$ and $t'$ be two positive divisors of $h$ with $t|t'$. Let $k\leq\min\{n+1, \frac{q-1}{p^{t'}-1}\}$ and let 
$G_1, \ldots, G_k$ be subgeometries of order $p^t$ 
of independent subspaces of $\PG(n,q)$, $q=p^h$. Then there exist two subgeometries 
$G$ and $G'$
of order $p^t$ and $p^{t'}$, 
respectively, of $\PG(n,q)$ such 
that $G\cap G'=G_1\cup \ldots \cup G_k$.
\end{theorem}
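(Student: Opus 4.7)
The plan is to build $G$ and $G'$ inside $\Sigma:=\langle\Sigma_1,\ldots,\Sigma_k\rangle$, where $\Sigma_i:=\langle G_i\rangle$ has projective dimension $d_i$; the hypothesis $k\le n+1$ together with independence gives $\dim\Sigma=\sum_i(d_i+1)-1\le n$, so $\Sigma\subseteq\PG(n,q)$. I would adapt coordinates to the data by choosing, for each $i$, a vector basis $\{v_0^{(i)},\ldots,v_{d_i}^{(i)}\}$ of the vector space underlying $\Sigma_i$ with respect to which $G_i$ is canonically embedded, i.e.\ $G_i=\{\F_q(\sum_j\gamma_j v_j^{(i)}):\gamma_j\in\F_{p^t}\text{ not all zero}\}$. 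By independence, $\bigcup_i\{v_j^{(i)}\}_j$ is then a basis of $\Sigma$.

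For $G$, I take the canonical $p^t$-subgeometry of $\Sigma$ with respect to this union basis, namely the set of $\F_q$-points whose coordinates in $\{v_j^{(i)}\}$ all lie in $\F_{p^t}$. Trivially $G_i\subseteq G$, and since the coordinates transverse to $\Sigma_i$ vanish, $G\cap\Sigma_i=G_i$ for every $i$.

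For $G'$ the idea is to twist the unit point. Since $p^{t'}-1\mid q-1$, the group $\F_q^*/\F_{p^{t'}}^*$ has $(q-1)/(p^{t'}-1)$ elements, and by hypothesis I can pick $\mu_1,\ldots,\mu_k\in\F_q^*$ lying in pairwise distinct cosets of $\F_{p^{t'}}^*$. I then define $G'$ to be the $p^{t'}$-subgeometry of $\Sigma$ whose frame has vertices $\F_q v_j^{(i)}$ and unit point $\F_q(\sum_{i,j}\mu_i v_j^{(i)})$; equivalently, $G'=\{\F_q(\sum_{i,j}\gamma_{ij}\mu_i v_j^{(i)}):\gamma_{ij}\in\F_{p^{t'}}\text{ not all zero}\}$. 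The inclusion $G_i\subseteq G'$ is immediate after rescaling a representative of any point of $G_i$ by $\mu_i$, since $\mu_i$ is constant in $j$ within the $i$-th block and $\F_{p^t}\subseteq\F_{p^{t'}}$.

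The main step is the reverse inclusion $G\cap G'\subseteq G_1\cup\cdots\cup G_k$, which is where the distinct-coset choice does the work. Given $P\in G\cap G'$, I represent $P$ by $v=\sum_{i,j}\alpha_{ij}v_j^{(i)}$ with $\alpha_{ij}\in\F_{p^t}$ (from $P\in G$), and $P\in G'$ yields $\tau\in\F_q^*$ and $\gamma_{ij}\in\F_{p^{t'}}$ with $\tau\alpha_{ij}=\gamma_{ij}\mu_i$ for all $i,j$. For every $(i,j)$ with $\alpha_{ij}\ne 0$, dividing gives $\tau/\mu_i=\gamma_{ij}/\alpha_{ij}\in\F_{p^{t'}}^*$, so $\tau$ belongs to the $\F_{p^{t'}}^*$-coset of each such $\mu_i$; distinctness then forces the support of $\alpha$ to lie over a single index $i_0$, so $P\in\Sigma_{i_0}\cap G=G_{i_0}$. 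The conceptual obstacle is precisely this choice of unit point: the naive choice $E'=E$ would collapse $G'$ to a superset of $G$ and give $G\cap G'=G$, which is far too big. The distinct-coset twist $\mu_i$ is what keeps each $G_i$ inside $G'$ while expelling every $\F_q$-point straddling two different $\Sigma_i$'s, and the bound $k\le(q-1)/(p^{t'}-1)$ is exactly the number of cosets of $\F_{p^{t'}}^*$ in $\F_q^*$ available for this twist.
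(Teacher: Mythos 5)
The paper does not actually prove this statement -- it is quoted verbatim from Donati and Durante with only a citation -- so there is no in-paper argument to compare yours against; I can only assess your proof on its own terms. As written it is correct for the statement as printed here. Both inclusions are verified: $G_i\subseteq G$ and $G_i\subseteq G'$ are immediate (the latter after rescaling by $\mu_i$, using $\F_{p^t}\subseteq\F_{p^{t'}}$, which is where $t\mid t'$ enters), and the key reverse inclusion is handled cleanly: from $\tau\alpha_{ij}=\gamma_{ij}\mu_i$ you get $\tau\in\mu_i\F_{p^{t'}}^*$ for every block $i$ meeting the support, and the choice of the $\mu_i$ in pairwise distinct cosets of $\F_{p^{t'}}^*$ -- possible precisely because $k\le(q-1)/(p^{t'}-1)$ -- forces the support into a single block, after which $G\cap\Sigma_{i_0}=G_{i_0}$ finishes the argument. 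This is exactly the mechanism the coset bound in the hypothesis is designed for.

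One caveat worth flagging: you construct $G$ and $G'$ entirely inside $\Sigma=\langle G_1,\ldots,G_k\rangle$, which may be a proper subspace of $\PG(n,q)$. Under this survey's definition of subgeometry (a $\PG(k,q_0)$ sitting canonically in some $k$-subspace), that is enough. But in the original Donati--Durante formulation the two subgeometries are $n$-dimensional, i.e.\ $G\cong\PG(n,p^t)$ and $G'\cong\PG(n,p^{t'})$ spanning $\PG(n,q)$, and then your construction must be extended to a full frame. The naive extension -- adjoining vectors $w_1,\ldots,w_{n-m}$ to both frames -- fails, since each added vertex point $\F_q w_l$ lies in both extended subgeometries and would pollute the intersection; making the extension work requires an additional (e.g.\ genericity or counting) argument that your write-up does not contain. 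If the intended reading is the spanning one, that is the gap to close; if not, your proof is complete.
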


The intersection of linear sets in general is considerably more difficult: in general, it is not the union of linear sets contained in independent subspaces and the intersection problem is far from being solved.

The intersection of an {\em $\F_q$-subline} (which can be seen as an $\F_q$-linear set of rank $2$ with $q+1$ points) and a club of $\PG(1,q^t)$ was first determined in \cite{F} by Fancsali and Sziklai. However, in this proof, the authors used that all clubs of $\PG(1,q^t)$ are projectively equivalent, which is in general not true (see Theorem \ref{inequivalent}); in \cite{clubs}, the authors provide a correct proof.


By the following theorem, the intersection problem for an $\F_q$-subline and a linear set is completely solved.

\begin{theorem}\cite[Theorem 8 and 9]{lavrauw} An $\mathbb{F}_q$-subline intersects an $\F_q$-linear set of rank $k$ of $\PG(1,q^h)$ in $0,1,\ldots,\min\{q+1,k\}$ or $q+1$ points and for every subline $L\cong\PG(1,q)$ of $\PG(1,q^h)$, there is a linear set $S$ of rank $k$, $k\leq h$ and $k\leq q+1$, intersecting $L$ in exactly $j$ points, for all $0\leq j\leq k$.
\end{theorem}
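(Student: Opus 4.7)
The plan is to treat the two claims separately. For the upper bound, I would realize each linear set by its underlying $\F_q$-subspace: write $L=\B(W)$ and $S=\B(U)$ with $W\subseteq \F_{q^h}^2$ of $\F_q$-dimension $2$ and $U\subseteq \F_{q^h}^2$ of $\F_q$-dimension $k$. For each point $P\in L$ choose a representative $v_P\in W\setminus\{0\}$ and set $\ell_P:=\F_{q^h}v_P$, an $h$-dimensional $\F_q$-subspace of $\F_{q^h}^2$. The key observation is that distinct points of $L$ yield $\F_q$-disjoint $\ell_P$: if $P\neq P'$ then $v_P,v_{P'}$ are $\F_{q^h}$-linearly independent (otherwise $\F_{q^h}v_P=\F_{q^h}v_{P'}$, contradicting $P\neq P'$), so any common vector $\lambda v_P=\mu v_{P'}$ forces $\lambda=\mu=0$.

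Since $P\in L\cap S$ is equivalent to $U\cap\ell_P\neq\{0\}$, and since the subspaces $U\cap\ell_P$ for $P\in L$ sit pairwise-disjointly inside $U$, we have
\[
|L\cap S|\;\leq\;\sum_{P\in L}\dim_{\F_q}(U\cap\ell_P)\;\leq\;\dim_{\F_q}U\;=\;k.
\]
Combined with the trivial bound $|L\cap S|\leq|L|=q+1$, this gives $|L\cap S|\leq \min\{q+1,k\}$ and places the intersection size in the required set.

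For the existence part, after applying a projectivity of $\PG(1,q^h)$ I may assume $L=\{\F_{q^h}(a,b):(a,b)\in\F_q^2\setminus\{0\}\}$. For $0\leq j\leq q$ I would pick any $k$-dimensional $\F_q$-subspace $W\subseteq\F_{q^h}$ (possible because $k\leq h$) and an $\F_q$-linear endomorphism $f\in\mathrm{End}_{\F_q}(W)$ whose characteristic polynomial over $\F_q$ has exactly $j$ distinct $\F_q$-roots $c_1,\dots,c_j$. Setting $U:=\{(x,f(x)):x\in W\}\subseteq W\times W$, one checks that $U$ has $\F_q$-dimension $k$ and
\[
\B(U)\cap L=\{\F_{q^h}(1,c):c\text{ is an }\F_q\text{-eigenvalue of }f\},
\]
a set of exactly $j$ points; the point $\F_{q^h}(0,1)$ cannot appear because $U\subseteq W\times W$ forces $(0,y)\in U$ only when $y=0$. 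To produce such an $f$, take its characteristic polynomial to be $\prod_{i=1}^{j}(x-c_i)\cdot g(x)$ with $g\in\F_q[x]$ of degree $k-j$ and no $\F_q$-roots, and realise it via a companion matrix. The remaining case $j=q+1$ (which forces $k=q+1$) is handled separately by setting $U:=\F_q^2+U_2$ for any $(k-2)$-dimensional $\F_q$-subspace $U_2\subseteq\F_{q^h}^2$ with $U_2\cap\F_q^2=\{0\}$, yielding $L\subseteq\B(U)$.

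The principal subtlety is the existence of the characteristic polynomial in the subcase $k-j=1$: a degree-one polynomial over $\F_q$ always has an $\F_q$-root, so one cannot extend by an irreducible linear factor. The fix is to allow a repeated root among the $c_i$, taking $(x-c_1)^2\prod_{i\geq 2}(x-c_i)$, which still has exactly $j$ distinct $\F_q$-roots. The small edge case $k=1,\,j=0$ falls outside this recipe entirely; there one handles it directly by taking $U=\F_q\cdot(1,\omega)$ for any $\omega\in\F_{q^h}\setminus\F_q$, which gives a one-point linear set disjoint from $L$.
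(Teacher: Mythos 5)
The paper does not actually prove this statement --- it is quoted from \cite{lavrauw} --- so your argument has to stand on its own. The existence half of your proposal does: the graph construction $U=\{(x,f(x)):x\in W\}$, the identification of $\B(U)\cap L$ with the set of $\F_q$-eigenvalues of $f$, and your patches for the edge cases $k-j=1$, $(k,j)=(1,0)$ and $j=q+1$ all check out.

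The first half, however, contains a genuine error. From the fact that the subspaces $U\cap\ell_P$, $P\in L$, pairwise intersect trivially, you conclude that $\sum_{P\in L}\dim_{\F_q}(U\cap\ell_P)\leq\dim_{\F_q}U$. Pairwise trivial intersection does not make a family of subspaces independent, and the inequality fails here: already for $k=2$ and $U=W$ each $U\cap\ell_P=\F_q v_P$ is one-dimensional and there are $q+1$ of them inside the two-dimensional space $U$. Indeed, for any two distinct points $P_1\neq P_2$ of $L$ one has $\ell_{P_1}\oplus\ell_{P_2}=\F_{q^h}^2$, so every further $\ell_{P_3}$ lies inside $\ell_{P_1}\oplus\ell_{P_2}$ and independence breaks down as soon as three intersection points are involved. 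More tellingly, the conclusion you derive --- $|L\cap S|\leq\min\{q+1,k\}$ unconditionally --- is itself false, and this is exactly why the theorem carries the extra alternative ``or $q+1$'': a subline can be wholly contained in a linear set of rank $k<q+1$ (take $W\subseteq U$ with $\dim_{\F_q}U=3$; then $|L\cap S|=q+1>3$ for $q>2$). The actual content of the first claim is that if $L\not\subseteq S$ then $|L\cap S|\leq k$, and that needs a real argument; in \cite{lavrauw} it is obtained via field reduction, viewing $L$ as a regulus of $q+1$ elements of the Desarguesian $(h-1)$-spread of $\PG(2h-1,q)$ and analysing how the $(k-1)$-space underlying $S$ can meet them. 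Your dimension count does not replace that step.
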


This theorem was later extended by Pepe where she determines an upper bound on the size of the intersection of an $\F_{q^s}$-subline and a linear set. Note that, opposed to the case where $s=1$, this theorem does not show that all possibilities occur.
\begin{theorem}\cite[Proposition 5]{pepe} An $\F_q$-linear set $L$ of $\PG(1,q^t)$ either contains a fixed subline $\PG(1,q^s)$, $s|t$, or it intersects it in at most $\frac{t}{s}(q^{s-1}+q^{s-2}+\cdots+1)$ points. 
\end{theorem}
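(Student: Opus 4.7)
The plan is to translate the intersection $L \cap L_0$ into a question about the rank-drop locus of a pencil of $\F_q$-linear maps, and then to bound that locus using the $\F_{q^s}$-structure of $\F_{q^t}$.

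First, after applying a projectivity of $\PG(1,q^t)$, I would assume the fixed subline to be $L_0 = \{\F_{q^t}(1, \zeta) : \zeta \in \F_{q^s}\} \cup \{\F_{q^t}(0,1)\}$. Writing $L = \B(U)$ for an $\F_q$-subspace $U \subseteq \F_{q^t}^2$ of $\F_q$-dimension $k$, I fix an $\F_q$-linear isomorphism $\Psi = (A,B)\colon \F_q^k \to U$ with $A, B\colon \F_q^k \to \F_{q^t}$ being $\F_q$-linear. Then a point $\F_{q^t}(1,\zeta) \in L_0$ with $\zeta \in \F_{q^s}$ lies in $L$ if and only if the map $B - \zeta A$ has nontrivial kernel, and similarly $\F_{q^t}(0,1) \in L$ if and only if $\ker A \neq 0$. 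Hence $|L \cap L_0|$ is the number of $\zeta \in \F_{q^s} \cup \{\infty\}$ for which the pencil member $B - \zeta A$ (or $A$, when $\zeta = \infty$) is rank-deficient.

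Next, I would observe that the cone $\FR(L_0) \subseteq \F_{q^t}^2$ is the vanishing locus of $Q(x,y) = xy^{q^s} - x^{q^s}y$, which factors as $x \prod_{\zeta \in \F_{q^s}}(y - \zeta x)$. Pulling $Q$ back through $\Psi$ and using $c_i^{q^s} = c_i$ for $c_i \in \F_q$, the composed polynomial $\tilde Q(c) = A(c) \prod_{\zeta \in \F_{q^s}}(B(c) - \zeta A(c))$ is a quadratic form on $\F_q^k$ with coefficients in $\F_{q^t}$, whose zero set is the disjoint (except at $0$) union of the $\F_q$-linear subspaces $V_\zeta := \ker(B - \zeta A)$ for $\zeta \in \F_{q^s}$ together with $V_\infty := \ker A$. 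The hypothesis $L \not\supseteq L_0$ translates into the existence of some $\zeta_0$ with $V_{\zeta_0} = \{0\}$, i.e., $B - \zeta_0 A$ is injective; after the shift $\zeta \mapsto \zeta - \zeta_0$, I may assume $B$ itself is injective with image $C := B(\F_q^k) \subseteq \F_{q^t}$ of $\F_q$-dimension $k$. Setting $\tilde A := A \circ B^{-1} \colon C \to \F_{q^t}$, the task becomes: bound the number of $\eta \in \F_{q^s}$ for which $\ker(\tilde A - \eta \cdot \mathrm{id}) \cap C \neq \{0\}$ (plus possibly one for the point at infinity).

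To produce the factor $t/s$ in the claimed bound, I would exploit that $\F_{q^t}$ has dimension $t/s$ as an $\F_{q^s}$-vector space. The $\F_q$-linear map $\tilde A$ decomposes under the Galois action $\mathrm{Gal}(\F_{q^s}/\F_q) = \langle \tau : x \mapsto x^q\rangle$ into $\tilde A = \sum_{j=0}^{s-1} \tilde A^{(j)}$ with each $\tilde A^{(j)}$ being $\tau^j$-semilinear over $\F_{q^s}$ (obtained by grouping the terms of $\tilde A$, viewed as a $q$-polynomial, according to their exponent modulo $s$). The eigenvalue equation $\tilde A(y) = \eta y$ with $\eta \in \F_{q^s}$ reorganizes along this decomposition, with the valid eigenvalues clustering into at most $t/s$ classes indexed by $\F_{q^s}$-directions in $\F_{q^t}$; each class would contribute at most $(q^s-1)/(q-1) = q^{s-1} + \cdots + 1$ distinct eigenvalues, matching the maximum size of a proper $\F_q$-linear set of rank $s$ in $\PG(1,q^s)$. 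The main obstacle is precisely this last bookkeeping step: a naive Schwartz--Zippel or degree-type argument only yields $O(q^s)$ eigenvalues in $\F_{q^s}$, and the sharper factor $t/s$ requires a careful tracking of how the $\F_{q^s}$-semilinear components of the pencil interact with the $(t/s)$-fold $\F_{q^s}$-decomposition of $\F_{q^t}$.
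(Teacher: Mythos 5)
First, a point of reference: the paper you are being compared against does not prove this statement at all --- it is quoted from \cite{pepe} (Proposition 5 there) without proof --- so your attempt has to stand on its own.

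Your reduction is correct as far as it goes: normalising $L_0$, writing $L=\B(U)$ with $U$ the image of $(A,B)$, identifying $|L\cap L_0|$ with the number of $\zeta\in\F_{q^s}\cup\{\infty\}$ for which $\ker(B-\zeta A)\neq\{0\}$, and using a missing point $\zeta_0$ to make one member of the pencil invertible are all fine (the object $\tilde Q$ is not a quadratic form --- it has degree $q^s+1$ --- but nothing depends on that). The difficulty is that this reduction is essentially a restatement of the theorem rather than progress towards it: the graph $\{(\tilde A(y),y):y\in C\}$ is again a $k$-dimensional $\F_q$-subspace of $\F_{q^t}^2$, and the $\eta\in\F_{q^s}$ with $\ker(\tilde A-\eta\,\mathrm{id})\cap C\neq\{0\}$ are exactly the affine points of the corresponding linear set lying on the subline. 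Consequently the entire content of the bound $\frac{t}{s}\cdot\frac{q^s-1}{q-1}$ is concentrated in your final ``clustering'' step, and that step is not an argument: the $t/s$ classes are never defined (the eigenspaces $\ker(\tilde A-\eta\,\mathrm{id})$ are $\F_q$-subspaces with no natural assignment to $t/s$ ``$\F_{q^s}$-directions'', of which there are $\frac{q^t-1}{q^s-1}\gg t/s$ in any case), the claim that each class contributes at most $\frac{q^s-1}{q-1}$ eigenvalues is never justified, and the semilinear decomposition $\tilde A=\sum_j\tilde A^{(j)}$ has nothing to act on since $C$ need not be $\F_{q^s}$-stable. You concede this yourself in the last sentence. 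The elementary count that is available from your setup --- the kernels $V_\zeta$ pairwise meet trivially inside $\F_q^k$, so at most $\frac{q^k-1}{q-1}$ of them are nonzero --- yields a bound in terms of the rank $k$, not in terms of $t/s$ and $s$, and is strictly weaker precisely in the interesting range $k>s$. Closing the gap requires an input of a different nature (Pepe derives the factor $t/s$ from degree considerations for the algebraic variety attached to the subline via the tower $\F_q\subseteq\F_{q^s}\subseteq\F_{q^t}$), not merely more careful bookkeeping of the pencil.
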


The following theorem deals with the slightly more general case of the intersection of two linear sets of rank $3$ in $\PG(1,q^t)$. But as mentioned before, the general problem remains wide open.

\begin{theorem}\label{inter} \cite[Theorem 23 and Remark 24]{lavrauw} Two $\F_q$-linear sets of rank 3 in $\PG(1,q^h)$, $q>3$, intersect in at most $2q+2$ points if $q$ is odd, and in at most $2q+3$ points if $q$ is even. For general $q$, there are two linear sets of rank $3$ in $\PG(1,q^t)$ intersecting in exactly $2q+2$ points. 
\end{theorem}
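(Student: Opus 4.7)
The plan is to pass to the $\F_q$-subspace description underlying field reduction: write $\B(\pi_i) = \B(U_i)$, where $U_i \subset V := \F_{q^h}^2$ is a $3$-dimensional $\F_q$-subspace, and recall that a point $P = \F_{q^h} v$ belongs to $\B(U_i)$ precisely when $U_i \cap \F_{q^h} v \neq \{0\}$, its weight with respect to $U_i$ being $\dim_{\F_q}(U_i \cap \F_{q^h} v)$. Assuming $\B(U_1) \cap \B(U_2) \neq \emptyset$ and applying the action $\varphi_\omega : x \mapsto \omega x$ used in the proof of Lemma \ref{Desar} (which fixes each spread element setwise and acts transitively on the $\F_q$-points of a fixed spread element), I can normalise so that $U_1$ and $U_2$ share a common non-zero vector $v_0$ corresponding to a chosen common point of the intersection.

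I would then partition $\B(U_1) \cap \B(U_2)$ into two types: a point $P = \F_{q^h} v$ is \emph{aligned} if $U_1 \cap \F_{q^h} v$ and $U_2 \cap \F_{q^h} v$ share a common non-zero $\F_q$-vector, and \emph{split} otherwise. Aligned points are exactly the points of the linear set $\B(U_1 \cap U_2)$, and since the degenerate case $U_1 = U_2$ can be excluded, $\dim_{\F_q}(U_1 \cap U_2) \leq 2$, so aligned points contribute at most $q+1$ to the count. The weight constraint $x_1 + (q+1)x_2 + (q^2+q+1)x_3 = q^2+q+1$ for each $U_i$ further restricts the contribution of higher-weight intersection points. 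For a split point $P = \F_{q^h} u$ there exist a non-zero $u \in U_1$ and $\lambda \in \F_{q^h} \setminus \F_q$ with $\lambda u \in U_2$; fixing $\F_q$-bases of $U_1$ and $U_2$ extending $v_0$, this condition rewrites as a polynomial equation over $\F_{q^h}$ in the unknowns $u$ and $\lambda$.

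The main obstacle is the careful enumeration of these split points modulo the equivalence $u \sim \mu u$ for $\mu \in \F_q^\ast$. The degree of the governing equation, combined with the $\F_q^\ast$-action, should produce the bound $2q+2$; the extra solution accounting for the bound $2q+3$ in characteristic two would arise because an auxiliary quadratic (or additive/trace) equation which is insoluble in odd characteristic becomes soluble in characteristic two, in the same spirit as the parity dichotomy that appeared in the bilinear form field reduction theorem. Finally, to show that $2q+2$ is realised, I would exhibit an explicit pair $U_1, U_2$, for instance by choosing $U_1$ and $U_2$ in general position inside a common $4$-dimensional $\F_q$-subspace of $V$ so that the governing equation for split points attains its maximum number of solutions, and combining this with a suitable aligned contribution to reach exactly $2q+2$.
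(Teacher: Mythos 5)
Your reduction to the vector--space picture, the normalisation via $\varphi_\omega$ from Lemma \ref{Desar}, and the split of the intersection into points of $\B(U_1\cap U_2)$ (at most $q+1$ of them once $U_1\neq U_2$) versus ``split'' points are all sound. The gap is that the entire content of the theorem is concentrated in the step you defer: bounding the split points. The governing condition you set up is the vanishing of the $\F_{q^h}$-valued bilinear form $\beta(u,w)=u^{(1)}w^{(2)}-u^{(2)}w^{(1)}$ for $(u,w)\in (U_1\setminus\{0\})\times(U_2\setminus\{0\})$, i.e.\ a single bilinear equation with coefficients in $\F_{q^h}$ evaluated on $\PG(2,q)\times\PG(2,q)$, and no ``degree'' argument on that equation alone can yield the bound: Theorem \ref{raar} of this very paper exhibits planes $\pi\neq\pi'$ (so $3$-dimensional $U_1\neq U_2$ with $\dim_{\F_q}(U_1\cap U_2)\leq 2$) for which $\B(U_1)=\B(U_2)$, hence the governing equation has $q^2+q+1$ solutions, of which at least $q^2$ are split. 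So any correct argument must invoke the hypothesis that the two linear sets are distinct \emph{as point sets}, and your proposal never identifies where that hypothesis enters. The odd/even dichotomy ($2q+2$ versus $2q+3$) is likewise only conjectured to come from some auxiliary quadratic; nothing in the setup produces it.

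For comparison: this survey gives no proof (it cites Theorem 23 and Remark 24 of \cite{lavrauw}), and the argument there is combinatorial rather than algebraic. One fixes a common point $P_0$ of the two linear sets, takes a point $R_0\in\pi_1\cap\FR(P_0)$, and covers $S_1$ by the pencil of $q+1$ sublines $\B(\ell)$, where $\ell$ runs over the lines of the plane $\pi_1$ through $R_0$; the subline-intersection theorem quoted earlier in Section \ref{S4} says each such $\B(\ell)$ meets the rank-$3$ linear set $S_2$ in at most $\min\{q+1,3\}=3$ points unless it is entirely contained in $S_2$, which immediately gives $|S_1\cap S_2|\leq 1+2(q+1)=2q+3$, with a finer analysis of the extremal configuration removing one point when $q$ is odd and handling the contained-subline case. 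If you want to salvage your route, you would need either to import that subline lemma to control how many split points can lie on a fixed $\B(\ell)$, or to genuinely analyse the solution set of the bilinear equation (essentially a hyperplane section of a Segre variety $\S_{2,2}$ over $\F_q$ inside $\PG(8,q^h)$), which is where all the difficulty resides.
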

\subsection{Scattered linear sets and pseudoreguli}\label{pseudo}

We focus on scattered $\F_q$-linear sets of rank $3r$ in $\PG(2r-1,q^3)$. By Theorem \ref{max}, these scattered linear sets are {\em maximum scattered}. In this subsection, we will describe the relationship between scattered linear sets and pseudoreguli. 

First, it is worth noticing that all maximum scattered linear sets in $\PG(2r-1,q^3)$ are $\PGammaL$-equivalent (this was shown for $r=2$ in \cite[Proposition 2.7]{MaPoTr2007} and for general $r$ in \cite[Theorem 4]{lavrauw2}), whereas in $\PG(2r-1,q^t)$, $t>4$, there exist inequivalent maximum scattered linear sets (see Theorem \ref{inequivalent2}).

Let $\L$ be a scattered $\F_q$-linear set of rank $3r$ in $\PG(2r-1,q^3)$, then it can be shown (see \cite[Lemma 5]{lavrauw2}) that a line of $\PG(2r-1,q^3)$ meets $\L$ in $0,1,q+1$ or $q^2+q+1$ points and every point of $\L$ lies on exactly one $(q^2+q+1)$-secant to $\L$. Two different $(q^2+q+1)$-secants to $\L$ are disjoint and there exist exactly two $(r-1)$-spaces, called transversal spaces, meeting each of the $(q^2+q+1)$-secants. In the spirit of the pseudoregulus defined by Freeman in \cite{Freeman1980}, and extending the definition in \cite{MaPoTr2007}, the {\em pseudoregulus} $\P$ associated with $\L$ is defined as the set $\P$ of $\frac{q^{3r}-1}{q^3-1}$ lines meeting $\L$ in $q^2+q+1$ points.

The following theorem gives a geometric characterisation of a regulus and pseudoregulus.

\begin{theorem} \cite[Theorem 24]{lavrauw2} Let $q>2$. Let $\tilde{\S}$ be the point set of a set $\mathcal{S}$ of $q^3+1$ mutually disjoint lines in $\PG(3,q^3)$ such that the subline defined by three collinear points of $\tilde{\S}$ is contained in $\tilde{\S}$, then $\S$ is a regulus or a pseudoregulus.
\end{theorem}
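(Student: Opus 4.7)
The plan is to fix three lines $s_1, s_2, s_3\in\S$, form the unique hyperbolic quadric $\Q$ they span, with reguli $\R\ni s_1,s_2,s_3$ and $\R^\ast$ (the opposite regulus, consisting of common transversals), and then bifurcate on whether the remaining lines of $\S$ lie on $\Q$. First I would establish a subline-closure lemma: for any line $\ell$ of $\PG(3,q^3)$ with $|\ell\cap\tilde{\S}|\geq 3$, the hypothesis makes $\ell\cap\tilde{\S}$ closed under taking the $\F_q$-subline through any three of its points, and so (by a standard closure argument on $\PG(1,q^3)$) $\ell\cap\tilde{\S}$ is an $\F_q$-linear set on $\ell$. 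In particular, $\ell\subseteq\tilde{\S}$ forces $\ell$ to be a transversal of $\S$.

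Next comes the dichotomy. If every line of $\S$ lies in $\R$, then $\S\subseteq\R$ and equality of cardinalities forces $\S=\R$, a regulus. Otherwise some $s_0\in\S$ lies off $\Q$. The classical regulus fact---any line meeting three mutually skew lines of $\R^\ast$ must lie in the opposite regulus $\R$---shows that $s_0$ meets at most two lines of $\R^\ast$. Since every transversal of $\S$ meets $s_1,s_2,s_3$ and hence lies in $\R^\ast$, and must also meet $s_0$, the set of transversals of $\S$ has size at most two.

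To pin down the pseudoregulus case I would show that exactly two transversals exist and reconstruct a scattered $\F_q$-linear set $\L$ of rank $6$ whose $(q^2+q+1)$-secants are precisely the lines of $\S$. Writing $n_t:=|t\cap\tilde{\S}|$ and $\S_1:=\S\cap\R$, each $t\in\R^\ast$ satisfies $n_t\geq|\S_1|$ and, by the first step, $n_t$ is an admissible $\F_q$-linear-set size on $\PG(1,q^3)$ whenever $n_t\geq 3$, with the extremal value $q^3+1$ attained iff $t$ is a transversal. Combining the double-counting identity
\[
\sum_{t\in\R^\ast}n_t=(q^3+1)|\S_1|+\sum_{s\in\S\setminus\S_1}\bigl|\{t\in\R^\ast:\,t\cap s\neq\emptyset\}\bigr|
\]
with the bound $|\{t\in\R^\ast:\,t\cap s\neq\emptyset\}|\leq 2$ for each $s\in\S\setminus\S_1$, and restricting the $n_t$'s to admissible linear-set sizes, I would argue that the $n_t$'s saturate to $q^3+1$ on exactly two lines $t_1,t_2\in\R^\ast$. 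With $t_1,t_2$ in hand, the family of $(q+1)$-sublines $\sigma_t\subseteq t\cap\tilde{\S}$ for $t\in\R^\ast\setminus\{t_1,t_2\}$ carves on each $s\in\S$ an $\F_q$-linear set $L_s$ of rank $3$, and I would verify (following the constructions in \cite{MaPoTr2007,lavrauw2}) that $\L:=\bigcup_{s\in\S}L_s$ is scattered of rank $6$ with $\S$ as its pseudoregulus.

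The hard part will be showing that exactly two transversals exist rather than zero or one, i.e.\ that no other distribution of the $n_t$'s is consistent with the subline hypothesis. The restrictive list of admissible linear-set sizes on $\PG(1,q^3)$, combined with the hypothesis $q>2$, is what rules out alternative configurations; indeed for $q=2$ an $\F_q$-subline has only $q+1=3$ points and the closure hypothesis becomes vacuous, reflecting the genuine necessity of $q>2$.
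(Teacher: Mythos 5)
The survey you are reading does not prove this statement; it only cites \cite[Theorem 24]{lavrauw2}, so your proposal has to be judged on its own merits. Your skeleton is reasonable and several individual steps are correct: three mutually skew lines of $\S$ do determine a unique hyperbolic quadric $\Q$ with reguli $\R$ and $\R^\ast$; a line meeting three lines of $\R^\ast$ must belong to $\R$, so a line of $\S$ not in $\R$ meets at most two lines of $\R^\ast$ and hence $\S$ has at most two transversals; and the regulus branch and the remark about the vacuity of the hypothesis for $q=2$ are fine.

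However, the two steps that carry the entire weight of the theorem are asserted rather than proved. First, your ``standard closure argument'' claiming that an $\F_q$-subline-closed subset of $\PG(1,q^3)$ is an $\F_q$-linear set is not a standard fact: subline-closure and $\F_q$-linearity are logically independent conditions (a linear set $\B(U)$ need not contain the subline through three of its points, since the required rescalings of vectors of $U$ generally leave $U$; conversely closure under sublines does not hand you an underlying $\F_q$-subspace). What your counting actually needs is a classification, or at least a restricted list of possible sizes, of subline-closed subsets of $\PG(1,q^3)$, and that has to be established, not invoked. Second, the crux of the theorem --- that in the non-regulus case there are \emph{exactly} two transversals, and that the sublines cut out on the lines of $\S$ assemble into a single scattered $\F_q$-linear set of rank $6$ having $\S$ as its set of $(q^2+q+1)$-secants --- is exactly the part you defer (``I would argue'', ``I would verify''). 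Your double-counting identity, combined with $n_t\geq|\S_1|$ and $|\{t:\,t\cap s\neq\emptyset\}|\leq 2$, only yields a lower bound on $|\S_1|$ of roughly $q-1$ and does not by itself force the $n_t$ to saturate on exactly two lines; and even granting two transversals $t_1,t_2$, showing that $\bigcup_s L_s$ is one linear set of rank $6$ (rather than merely a union of $q^3+1$ rank-$3$ linear sets, one per line) is where the pseudoregulus structure is actually created. As it stands the proposal is a plausible outline with the decisive lemmas missing.
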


We have seen that there is a pseudoregulus associated to every maximum scattered linear set in $\PG(2r-1,q^3)$. A maximum scattered linear set in $\PG(2r-1,q^t)$ has rank $rt$, but if $t>3$, we can not in general associate a pseudoregulus to it. For this reason, it makes sense to define maximum scattered linear sets {\em of pseudoregulus type}. Let $L$ be a scattered $\F_q$-linear set of $\Lambda=\PG(2r-1,q^t)$ of rank $rt$, $r,t\geq 2$, we say that $L$ is of pseudoregulus type if 
\begin{itemize}
\item[(i)] there exists $m=\frac{q^{rt}-1}{q^t-1}$ pairwise disjoint lines of $\Lambda$, say $s_1,s_2,\ldots,s_m$ such that
$$\ |L\cap s_i|=q^{t-1}+q^{t-2}+\ldots+q+1,~ \ \forall i=1,\ldots,m;$$
\item[(ii)] there exist exactly two $(r-1)$-dimensional subspaces $T_1$ and $T_2$ of $\Lambda$ disjoint from $L$ such that $T_j\cap s_i\neq \emptyset$ for each $i=1,\ldots,m$ and $j=1,2$.
\end{itemize}

The following theorem shows that this family of linear sets is not empty by constructing a family of linear sets $L_{\rho,f}$ that are maximum scattered and of pseudoregulus type. 
\begin{theorem} \cite{italianen} Let $T_1=\PG(U_1,\F_{q^t})$ and $T_2=\PG(U_2,\F_{q^t})$ be two disjoint $(r-1)$-dimensional subspaces of $\Lambda=\PG(V,\F_{q^t})=\PG(2r-1,q^t)$ $(t>1)$ and let $\Phi_f$ be the semilinear collineation between $T_1$ and $T_2$, induced by the invertible semilinear map $f=U_1\rightarrow U_2$ having as companion automorphism an element $\sigma\in Aut(\F_{q^t})$ such that $Fix(\sigma)=\F_q$. Then, for each $\rho \in \F_{q^t}^\ast$, the set

$$L_{\rho,f}=\{\langle u+\rho f(u)\rangle_{q^t}:u \in U_1\setminus \{0\}\}$$
is an $\F_q$-linear set of $\Lambda$ of pseudoregulus type whose associated pseudoregulus is $\mathcal{P}_{L_{\rho,f}}=\{\langle P,P^{\Phi_f}\rangle_{q^t}: P \in T_1\}$, with transversal spaces $T_1$ and $T_2$.

\end{theorem}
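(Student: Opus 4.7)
The plan is to identify an explicit $\F_q$-subspace $W$ of $V$ with $L_{\rho,f}=\B(W)$, deduce scatteredness from the fact that $\mathrm{Fix}(\sigma)=\F_q$, and then verify the two defining conditions of pseudoregulus type using the decomposition $V=U_1\oplus U_2$ (which holds since $T_1\cap T_2=\emptyset$ and both have dimension $r-1$ in a $(2r-1)$-dimensional space).

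First I would set $W=\{u+\rho f(u):u\in U_1\}$. Because $f$ is $\sigma$-semilinear and $\mathrm{Fix}(\sigma)=\F_q$, the map $\psi:U_1\to V$, $u\mapsto u+\rho f(u)$, is additive and satisfies $\psi(\lambda u)=\lambda u+\rho\lambda^\sigma f(u)=\lambda\psi(u)$ for $\lambda\in\F_q$; hence $\psi$ is $\F_q$-linear. It is injective, because $u+\rho f(u)=0$ would place $u$ in $U_1\cap U_2=\{0\}$. Thus $\dim_{\F_q}W=rt$, so $L_{\rho,f}=\B(W)$ is an $\F_q$-linear set of rank $rt$. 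To see that it is scattered, suppose $\mu(u+\rho f(u))=v+\rho f(v)$ for some $\mu\in\F_{q^t}^\ast$ and nonzero $u,v\in U_1$. The decomposition $V=U_1\oplus U_2$ yields $v=\mu u$ and $\rho f(v)=\mu\rho f(u)$; combining with $f(\mu u)=\mu^\sigma f(u)$ forces $\mu^\sigma=\mu$, so $\mu\in\mathrm{Fix}(\sigma)=\F_q$. Every point of $L_{\rho,f}$ therefore has weight one and $|L_{\rho,f}|=(q^{rt}-1)/(q-1)$. The same decomposition shows $L_{\rho,f}$ is disjoint from $T_1$ and $T_2$.

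For condition (i), fix $P=\langle u\rangle_{q^t}\in T_1$ and set $\ell_P=\langle u,f(u)\rangle_{q^t}=\langle P,P^{\Phi_f}\rangle_{q^t}$. A vector of $W$ lying on $\ell_P$ has $U_1$-component in $\langle u\rangle_{q^t}$, so it equals $\alpha u+\rho\alpha^\sigma f(u)$ for some $\alpha\in\F_{q^t}$; by scatteredness these contribute $(q^t-1)/(q-1)=q^{t-1}+\cdots+1$ distinct points of $L_{\rho,f}$ to $\ell_P$. For $P\neq Q=\langle v\rangle_{q^t}$, any vector in $\ell_P\cap\ell_Q$ decomposes into a $U_1$-part in $\langle u\rangle_{q^t}\cap\langle v\rangle_{q^t}=\{0\}$ and a $U_2$-part in $\langle f(u)\rangle_{q^t}\cap\langle f(v)\rangle_{q^t}=\{0\}$ (using that $f$, being invertible and semilinear, preserves $\F_{q^t}$-linear independence); hence $\ell_P\cap\ell_Q=\emptyset$. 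The number of such lines equals $|T_1|=(q^{rt}-1)/(q^t-1)$, as required. Condition (ii) is then clear for the given $T_1,T_2$: each meets $\ell_P$ (in $P$ or $P^{\Phi_f}$), and both are disjoint from $L_{\rho,f}$.

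The main obstacle is proving that $T_1,T_2$ are the only such transversal $(r-1)$-spaces. Given any candidate $T'=\PG(U')$ disjoint from $L_{\rho,f}$ meeting every $\ell_P$, I would analyze the projection $\pi_1:V\to U_1$ along $U_2$ restricted to $U'$. If $\pi_1|_{U'}$ is bijective, $U'$ is the graph of some $\F_{q^t}$-linear $g:U_1\to U_2$; the intersection requirement forces $g(u)\in\langle f(u)\rangle_{q^t}$, so $g(u)=c(u)f(u)$, and additivity of $g$ together with $\F_{q^t}$-linear independence of $f(u),f(v)$ for independent $u,v$ forces $c$ to be a constant $c\in\F_{q^t}$. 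Then $\F_{q^t}$-linearity of $g$ yields $c\lambda^\sigma f(u)=\lambda c f(u)$ for all $\lambda$, and since $\sigma\neq\mathrm{id}$ (as $t>1$) this forces $c=0$, giving $U'=U_1$. If $\pi_1|_{U'}$ is not injective, set $W_2=U'\cap U_2$; for every $u\in U_1\setminus\pi_1(U')$ the intersection point of $U'$ with $\ell_P$ must lie in $U_2$, so $f(u)\in W_2$. Since $U_1\setminus\pi_1(U')$ spans $U_1$ whenever $\pi_1(U')\neq U_1$, the $\F_{q^t}$-subspace $f^{-1}(W_2)$ equals $U_1$, hence $W_2=U_2$ and $U'=U_2$, so $T'=T_2$.
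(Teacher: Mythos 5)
Your proof is correct and complete. Note that the paper itself gives no proof of this statement---it is quoted from the cited source [Lunardon--Marino--Polverino--Trombetti]---so there is no in-paper argument to compare against; your route (exhibiting $W=\{u+\rho f(u):u\in U_1\}$ as the underlying $\F_q$-subspace, using $V=U_1\oplus U_2$ componentwise to get scatteredness and the line counts, and a graph/kernel dichotomy for the projection $\pi_1|_{U'}$ to pin down the transversals) is essentially the standard one from that source. The only step where a reader should pause is the constancy of $c$ in your uniqueness argument: it needs two $\F_{q^t}$-independent vectors in $U_1$, i.e.\ $r\geq 2$, which is part of the definition of pseudoregulus type, so no gap results.
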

The authors also count the number of non-equivalent linear sets in the families $L_{\rho,f}$. Here, $\phi(t)$ denotes the Euler $\phi$-function, i.e. $\phi(t)$ is the number of integers $s$ smaller than $t$ and relatively prime to $t$.

\begin{theorem} \label{inequivalent2}\cite{italianen} In the projective space $\Lambda=\PG(2r-1,q^t)$ $(r\geq 2, t\geq 3)$ there are $\phi(t)/2$ orbits of scattered $\F_q$-linear sets of $\Lambda$ of rank $rt$ of type $L_{\rho,f}$ under the action of the collineation group of $\Lambda$.

\end{theorem}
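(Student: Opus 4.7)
The plan is to show that the unordered pair $\{\sigma,\sigma^{-1}\}$ is a complete $\PGammaL(2r,q^t)$-invariant of the linear sets of type $L_{\rho,f}$, and then to count. The admissible automorphisms are the generators of the cyclic group $\mathrm{Gal}(\F_{q^t}/\F_q)$, namely $x\mapsto x^{q^s}$ with $\gcd(s,t)=1$, giving $\phi(t)$ choices. For $t\ge 3$ one cannot have $\sigma^2=\mathrm{id}$ (this would force $t\mid 2s$ with $\gcd(s,t)=1$, hence $t\le 2$), so $\sigma\ne \sigma^{-1}$ always. Thus the involution $\sigma\leftrightarrow \sigma^{-1}$ is fixed-point-free and partitions the $\phi(t)$ admissible automorphisms into $\phi(t)/2$ unordered pairs, which I claim are in bijection with the orbits.

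First I would show that $\{\sigma,\sigma^{-1}\}$ is intrinsic to $L=L_{\rho,f}$. By the defining properties of a linear set of pseudoregulus type, the unordered pair $\{T_1,T_2\}$ of transversal spaces is determined by $L$. After a choice of labelling, the assignment $\Phi_L:T_1\to T_2$ sending a point $P$ of $T_1$ to the unique point of $T_2$ on the pseudoregulus line through $P$ is intrinsic; by construction of $L_{\rho,f}$ it coincides with the semilinear collineation induced by $\rho f$, hence has companion automorphism $\sigma$. If $\phi\in\PGammaL(2r,q^t)$ has companion $\tau$ and sends $L$ to $L'$, then the corresponding map for $L'$ is $\phi\circ\Phi_L\circ\phi^{-1}$, whose companion is $\tau\sigma\tau^{-1}=\sigma$ since $\mathrm{Aut}(\F_{q^t})$ is cyclic and therefore abelian. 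Swapping the labels of $T_1$ and $T_2$ replaces $\Phi_L$ by $\Phi_L^{-1}$, hence $\sigma$ by $\sigma^{-1}$. So $\{\sigma,\sigma^{-1}\}$ is a genuine $\PGammaL$-invariant of $L$.

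Next I would verify completeness: two linear sets of type $L_{\rho,f}$ sharing the invariant are $\PGammaL$-equivalent. Since $\PGL(2r,q^t)$ is transitive on ordered pairs of complementary $(r-1)$-spaces of $\Lambda$, I can align the transversal pairs in either of the two possible orders; the two choices account for $\sigma$ versus $\sigma^{-1}$. Having aligned them, pick coordinates so that $T_1=\F_{q^t}^r\oplus 0$ and $T_2=0\oplus \F_{q^t}^r$; every $L_{\rho,f}$ with these transversal spaces and companion $\sigma$ takes the form $L_A=\{\langle(x,Ax^\sigma)\rangle : x\in \F_{q^t}^r\setminus\{0\}\}$ for some $A\in\GL(r,q^t)$, and the linear map $(x,y)\mapsto(x,A^{-1}y)$ stabilises both transversal spaces and sends $L_A$ to the standard $L_I$. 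Composing these reductions yields the desired collineation.

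The main obstacle is the intrinsic nature of the companion automorphism in the second paragraph: one must verify that the map $\Phi_L$ read off from the pseudoregulus is genuinely a semilinear collineation of projective $(r-1)$-spaces with a well-defined companion automorphism, and that this companion really matches the $\sigma$ coming from the presentation $L_{\rho,f}$. Both facts rely on $L$ being scattered of maximum rank, which ensures that the pseudoregulus lines are bijective between $T_1$ and $T_2$ and that no ambiguity arises in the associated map; the abelianness of $\mathrm{Aut}(\F_{q^t})$ then makes the invariant truly conjugation-free and reduces the whole classification to the arithmetic count of $\phi(t)/2$.
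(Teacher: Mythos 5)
Your outline is, in substance, the argument of \cite{italianen} (the paper under review states the theorem without proof, so that source is the only thing to compare against): attach to each $L_{\rho,f}$ the unordered pair $\{\sigma,\sigma^{-1}\}$ of companion automorphisms, show it is a complete invariant under the collineation group, and count the $\phi(t)/2$ such pairs among the $\phi(t)$ generators of $\mathrm{Gal}(\F_{q^t}/\F_q)$. The arithmetic is right ($\sigma^2=\mathrm{id}$ with $\sigma:x\mapsto x^{q^s}$, $\gcd(s,t)=1$, forces $t\leq 2$), the conjugation step is right (the companion of $\phi\circ\Phi_L\circ\phi^{-1}$ is $\tau\sigma\tau^{-1}=\sigma$ because $\Aut(\F_{q^t})$ is abelian), and the completeness step is right: transitivity of $\PGL(2r,q^t)$ on ordered pairs of disjoint $(r-1)$-spaces lets you normalise to $L_A=\{\langle(x,Ax^{\sigma})\rangle\}$, the map $(x,y)\mapsto(x,A^{-1}y)$ reduces to $L_I$, and the swap $(x,y)\mapsto(y,x)$ realises $\sigma\leftrightarrow\sigma^{-1}$.

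The genuine gap is the opening assertion of your second paragraph, that the pair $\{T_1,T_2\}$ is determined by $L$ ``by the defining properties of a linear set of pseudoregulus type.'' The definition only says that, for a \emph{given} family of lines $s_1,\dots,s_m$ satisfying (i), there are exactly two transversal spaces; it does not exclude a second family of lines satisfying (i) and (ii) with a different transversal pair, hence a second presentation of the same point set with a companion automorphism outside $\{\sigma,\sigma^{-1}\}$. If that occurred, distinct pairs $\{\sigma,\sigma^{-1}\}$ could label the same orbit and $\phi(t)/2$ would be an overcount; without ruling it out, your invariant is not yet well defined and the lower bound on the number of orbits is unproved. The uniqueness of the pseudoregulus and of its transversal pair for $t\geq 3$ is a separate, non-trivial result of \cite{italianen} (obtained by characterising the pseudoregulus lines intrinsically as the long secants to $L$ and showing each point of $L$ lies on exactly one), and it genuinely uses $t\geq 3$: for $t=2$ the transversal spaces are not unique, which is the second reason, besides the parity of $\phi(t)$, that the hypothesis $t\geq 3$ appears. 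Your closing paragraph appeals to $L$ being scattered of maximum rank, but that property alone does not yield uniqueness of the pseudoregulus; this is the step you must actually supply.
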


Linear sets of pseudoregulus type are also studied because of the connection between linear sets and {\em semifields}, which will be discussed in Section \ref{S6}.

\section{Blocking sets and field reduction}\label{S5}
A {\em blocking set} in $\PG(n,q)$ {\em with respect to $k$-spaces} is a set $B$ of points such that every $k$-dimensional space in $\PG(n,q)$ contains at least one point of $B$. If we are considering blocking sets with respect to hyperplanes, we simply say that $B$ is a {\em blocking set}. A {\em minimal} blocking set $B$ (w.r.t. $k$-spaces) is a blocking set such that no proper subset of $B$ is a blocking set (w.r.t. $k$-spaces). 
A {\em small} blocking set in $\PG(n,q)$ with respect to $k$-spaces is a blocking set of size smaller then $3(q^{n-k}+1)/2$. A blocking set $B$ in $\PG(n,q)$ with respect to $k$-spaces is of R\'edei-type if there is a hyperplane containing $|B|-q^{n-k}$ points.

Linear blocking sets with respect to $(k-1)$-spaces in $\PG(n-1,q^t)$ were introduced by Lunardon \cite{L1}: he argues that an $\F_q$-linear set of rank $nt-kt+1$ is a blocking set with respect to $(k-1)$-spaces. This can easily be seen: let $\B(\pi)$ be an $\F_q$-linear set in $\PG(n-1,q^t)$, where $\pi$ is $(nt-kt)$-dimensional, then every $(kt-1)$-dimensional subspace of $\PG(nt-1,q)$ meets $\pi$ non-trivially, hence, the $(kt-1)$-spaces that arise from applying field reduction to the points of a $(k-1)$-space of $\PG(n-1,q^t)$ meet $\pi$, so $\B(\pi)$ is a blocking set w.r.t $(k-1)$-spaces.

Polito and Polverino \cite{polito2} showed that one can construct {\em minimal} linear blocking sets in $\PG(2,p^t)$, $p$ prime, $t\geq 4$ that are not of R\'edei-type. This contradicted a widespread conjecture which stated that a small minimal blocking set in $\PG(2,q^t)$ would necessarily be of R\'edei-type.

Soon after it was proven that there are small minimal linear blocking sets that are not of R\'edei-type, people conjectured that all small minimal blocking sets should be linear sets. This conjecture was stated formally by Sziklai in 2008 \cite{sziklai}. 
Up to our knowledge, this is the complete list of cases in which the linearity conjecture for blocking sets in $\PG(n,p^t)$, $p$ prime w.r.t. $k$-spaces has been proven.

\begin{itemize}
\item $t=1$ (for $n=2$, see \cite{blok}; for $n>2$, $k=n-1$, see \cite{heim}; for $n>2$, $k\neq n-1$, see \cite{sz})
\item $t=2$ (for $n=2$, see \cite{TS:97}; for $n>2$, $k=n-1$, see \cite{Storme-Weiner}; for $n>2$, $k\neq n-1$, see \cite{weiner})
\item $t=3$ (for $n=2$, see \cite{pol}; for $n>2$, $k=n-1$, see \cite{Storme-Weiner}; for $n>2$, $k\neq n-1$, see \cite{LSV,nora})
\item $k=n-1$ and $B$ is of R\'edei-type (for $n=2$, see \cite{simeon,redei2}; for $n>2$, see \cite{redei})
\item $k=n-1$ and dim$\langle B\rangle=t-1$ (see \cite{SziklaiVdV})
\item $k=n-1$ and dim$\langle B \rangle=t$ (see \cite{sz}).
\end{itemize}

It is shown in \cite{ik2} that, loosely speaking, if the linearity conjecture holds in $\PG(2,p^t)$, then it also holds for blocking sets with respect to $k$-spaces in $\PG(n,p^t)$, provided that $p$ is large enough.

When looking at the construction of a linear blocking set $B$ in $\PG(n-1,q^t)$ with respect to $(k-1)$-spaces, we see that we take $B$ to be $\B(\pi)$, where $\pi$ is an $(nt-kt)$-space in $\PG(nt-1,q)$, which is a blocking set with respect to $(kt-1)$-spaces. It is clear that every point set $\B(B')$, where $B'$ is a blocking set with respect to $(kt-1)$-spaces in $\PG(nt-1,q)$ is a blocking set with respect to $(k-1)$-spaces in $\PG(n-1,q^t)$. However, the difficulty lies in distinguishing when the obtained blocking set is minimal. The following theorem provides us with one case in which the minimality of $\B(B')$ can be proven. Note that a semioval is a set $S$ of points such that every point of $S$ lies on a unique tangent line to $S$. 
\begin{theorem}\cite{ik}  Let $\Omega$ be an $(nt-kt-2)$-dimensional subspace of $\PG(nt-1,q)$, let $\bar{B}$ be a minimal blocking set that is not a semioval, contained in the plane $\Gamma$ which is skew from $\Omega$ and let $K$ be the cone with vertex $\Omega$ and base $\bar{B}$. Let $B=\B(K)$, then $B$ is a minimal blocking set with respect to $(k-1)$-spaces in $\PG(n-1,q^t)$.
\end{theorem}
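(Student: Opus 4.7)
The plan is to prove the blocking and minimality claims separately, throughout identifying $B=\B(K)$ with the set of points $P\in\PG(n-1,q^t)$ whose spread element $\FR(P)$ in the Desarguesian spread $\D_{n,t,q}$ meets the cone $K$. For the blocking property, let $\Pi$ be a $(k-1)$-space of $\PG(n-1,q^t)$; it suffices to check that $\FR(\Pi)\cap K$ is non-empty, where $\FR(\Pi)$ is a $(kt-1)$-subspace of $\PG(nt-1,q)$. If $\FR(\Pi)\cap\Omega\neq\emptyset$ we are done since $\Omega\subseteq K$. Otherwise, I would pass to the quotient $\PG(nt-1,q)/\Omega\cong\PG(kt,q)$: the image of $\FR(\Pi)$ is a hyperplane of the quotient and the image of $\Gamma$ is a plane, so they meet in a line. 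Since $\bar B$ is a blocking set in $\Gamma$, that line contains a point $Q\in\bar B$, so $\langle\Omega,Q\rangle\subseteq\langle\FR(\Pi),\Omega\rangle$. The latter is a hyperplane of $\PG(nt-1,q)$ of dimension $nt-2$, and inside it the dimension count
\[
\dim(\FR(\Pi)\cap\langle\Omega,Q\rangle)\geq(kt-1)+(nt-kt-1)-(nt-2)=0
\]
produces a point of $\FR(\Pi)\cap\langle\Omega,Q\rangle\subseteq\FR(\Pi)\cap K$.

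For minimality, fix $P\in B$ and choose $R\in\FR(P)\cap K$; the goal is to build a $(k-1)$-space $\Pi\ni P$ of $\PG(n-1,q^t)$ with $\FR(\Pi)\cap K\subseteq\FR(P)$. In the generic subcase $R\notin\Omega$, projection from $\Omega$ sends $R$ to a unique point $Q\in\bar B$, and the minimality of $\bar B$ in $\Gamma$ furnishes a tangent line $\bar L\subseteq\Gamma$ to $\bar B$ at $Q$. My strategy is to select a hyperplane $\bar H$ of the quotient $\PG(kt,q)$ with $\bar H\cap\Gamma=\bar L$ that contains the image of $\FR(P)$, and then to lift $\bar H$ to a Desarguesian $(kt-1)$-subspace $\FR(\Pi)$ of $\PG(nt-1,q)$ through $\FR(P)$ with the additional property that $\FR(\Pi)\cap\langle\Omega,Q\rangle\subseteq\FR(P)$. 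The first condition forces $\FR(\Pi)\cap K\subseteq\langle\Omega,Q\rangle$, and the second then concentrates this intersection inside the single spread element $\FR(P)$, giving $\Pi\cap B=\{P\}$. The exceptional subcases (when $R\in\Omega$, so $P\in\B(\Omega)$, or when the image of $\FR(P)$ meets $\bar B$ in several points) are dealt with by invoking the non-semioval hypothesis: it forces some $Q\in\bar B$ to lie on at least two distinct tangent lines, which supplies the extra flexibility required to make the preceding construction go through in every case.

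The hardest step will be the simultaneous control of the Desarguesian constraint on $\FR(\Pi)$ and the tight intersection condition $\FR(\Pi)\cap K\subseteq\FR(P)$. A general $(kt-1)$-subspace of $\PG(nt-1,q)$ does \emph{not} arise as $\FR(\Pi')$ for some $(k-1)$-space $\Pi'$ of $\PG(n-1,q^t)$, so once the quotient hyperplane $\bar H$ has been fixed, the residual freedom in lifting $\bar H$ must be spent simultaneously on (i) achieving the Desarguesian property of the lift, (ii) avoiding the cone-lines $\langle\Omega,Q'\rangle$ for $Q'\in\bar B\setminus\{Q\}$, and (iii) avoiding every spread element $\FR(P')$ with $P'\neq P$ that happens to lie inside $\langle\Omega,Q\rangle$. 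The non-semioval hypothesis on $\bar B$ is precisely the assumption that rules out the pathological configurations in which this three-fold simultaneous choice becomes impossible.
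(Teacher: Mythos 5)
The paper itself does not reproduce a proof of this theorem (it is quoted from the preprint \cite{ik}), so your argument can only be judged on its own terms. Your proof of the blocking property is correct and complete: passing to the quotient by $\Omega$, locating a point $Q\in\bar B$ on the line in which the image of $\FR(\Pi)$ meets the image of $\Gamma$, and the final Grassmann count inside the hyperplane $\langle \FR(\Pi),\Omega\rangle$ are all sound. But this is the easy half; as the paper itself stresses, ``the difficulty lies in distinguishing when the obtained blocking set is minimal.''

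For minimality there is a genuine gap: the entire content of the theorem is the existence, for each $P\in\B(K)$, of a $(k-1)$-space $\Pi\ni P$ with $\FR(\Pi)\cap K\subseteq\FR(P)$, and you never establish it. You correctly reduce it to finding a member of the very restricted family $\{\FR(\Pi):P\in\Pi\}$ satisfying three simultaneous intersection conditions, observe that this is ``the hardest step,'' and then assert without argument that the non-semioval hypothesis makes the choice possible. That assertion is not a proof, and it is not even plausibly the right use of the hypothesis: with the paper's definition, ``$\bar B$ is not a semioval'' only guarantees that \emph{some} point of $\bar B$ lies on at least two tangent lines, whereas your construction would need extra tangents at the particular point $Q$ determined by $P$, hence at \emph{every} point of $\bar B$ in the worst case. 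Moreover the ``generic'' case is not as clean as you suggest: the projection $\bar P$ of $\FR(P)$ from $\Omega$ may meet the image of $\Gamma$ in a full line (in which case the tangent $\bar L$ is forced to be that line) or may even contain it (in which case no hyperplane $\bar H$ through $\bar P$ with $\bar H\cap\Gamma=\bar L$ exists at all); and the cases $\FR(P)\cap\Omega\neq\emptyset$, $\FR(P)$ meeting several generators of the cone, and $\FR(P)\cap\langle\Omega,Q\rangle$ of positive dimension each need separate treatment. A complete proof must carry out this case analysis and in each case actually exhibit --- typically by a counting argument over the $(k-1)$-spaces through $P$ or by an explicit coordinate construction --- a Desarguesian $(kt-1)$-space with the required tangency properties; none of that is present here.
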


If we take $\bar{B}$ in the previous theorem to be a line, then the constructed blocking set is a linear blocking set and we may conclude that a linear blocking set is indeed minimal. For blocking sets with respect to lines in $\PG(n-1,q^t)$ this was already shown in \cite{L2} and for $k\neq n-1$, we could deduce the minimality of a linear blocking set from \cite[Lemma 3.1]{sz}.

\section{Semifields and linear sets}\label{S6}

Finite semifields are a generalisation of finite fields (where associativity 
of multiplication is not assumed) and the study of linear sets and field reduction
has been shown very useful in this theory. 

A {\it finite semifield} $({\mathbb{S}},+,\circ)$ is an algebra of finite dimension 
over a finite field $\mathbb F$ with at least two elements,
and two binary operations $+$ and $\circ$, satisfying the following axioms. 
\begin{itemize}
\item[(S1)] $({\mathbb{S}},+)$ is a group with identity element $0$.
\item[(S2)] $x\circ(y+z) =x\circ y + x\circ z$ and $(x+y)\circ z = x\circ z + y
\circ z$, for all $x,y,z \in {\mathbb{S}}$.
\item[(S3)] $x\circ y =0$ implies $x=0$ or $y=0$.
\item[(S4)] $\exists 1 \in {\mathbb{S}}$ such that $1\circ x = x \circ 1 = x$,
for all $x \in {\mathbb{S}}$.
\end{itemize}
Without axiom (S4) we have the definition of a {\it pre-semifields}.

Semifields are usually studied up to isotopism, because of the one-to-one correspondence between the isotopism classes of semifields and the isomorphism classes of the associated projective planes (by a theorem of A. A. Albert).
An {\it isotopism} (or {\it isotopy}) between two (pre-)semifields $({\mathbb{S}},\circ)$ and $({\mathbb{S}}',\circ')$ is a triple $(F,G,H)$ of nonsingular linear maps from ${\mathbb{S}}$ to ${\mathbb{S}}'$ such that 
$$x^F \circ' y^G = (x\circ y)^H,$$
for all $x,y \in {\mathbb{S}}$. If such an isotopism exists, the (pre-)semifields ${\mathbb{S}}$ and ${\mathbb{S}}'$ are called {\it isotopic} and the isotopism class of a (pre-)semifield ${\mathbb{S}}$ is denoted by $[{\mathbb{S}}]$.

The nuclei of a semifield are associative substructures of a semifield, 
and they arise in a similar way as the (commutative) center of non-commutative algebraic structures. However, 
while the commutative center is uniquely defined for a non-commutative structure, there are four different  associative substructures to consider for non-associative structures. These are called the nucleus, the left nucleus, the middle nucleus, and the right nucleus and are defined as follows.

The subset 
$${\mathbb{N}}_l({\mathbb{S}}):=\{x~:~ x \in {\mathbb{S}} ~|~ x \circ (y\circ
z)=(x\circ y)\circ z, ~\forall y,z \in {\mathbb{S}}\},$$ 
is called the {\it left nucleus} of ${\mathbb{S}}$. Analogously, 
one defines the {\it middle nucleus} 
$${\mathbb{N}}_m({\mathbb{S}}):=\{y~:~ y \in {\mathbb{S}} ~|~ x \circ (y\circ
z)=(x\circ y)\circ z, ~\forall x,z \in {\mathbb{S}}\},$$
and the {\it right nucleus}
$${\mathbb{N}}_r({\mathbb{S}}):=\{z~:~ z \in {\mathbb{S}} ~|~ x \circ
(y\circ z)=(x\circ y)\circ z, ~\forall x,y \in {\mathbb{S}}\}.$$ 
The intersection of these three nuclei is called the
{\it nucleus} or {\it associative center} ${\mathbb{N}}({\mathbb{S}})$, while the intersection
of the associative center and the {\it commutative center} $C({\mathbb{S}})$ (defined in the usual way) is called the
{\it center} \index{center} of $\mathbb S$ and denoted by $Z({\mathbb{S}})$.
One easily verifies that all of these substructures are finite fields and ${\mathbb{S}}$ can be
seen as a (left or right) vectorspace over these substructures, e.g. as a left vector space
$V_l({\mathbb{S}})$ over its left nucleus. 
Right multiplication in $\mathbb S$ by an element $x$ is
denoted by $R_x$, i.e. $y^{R_x}=y\circ x$, which
is an endomorphism of $V_l({\mathbb{S}})$.

We can now explain the geometric approach to finite semifields, which has
been very fruitful in recent years. 

This approach naturally breaks up
the study of semifields into different cases depending on the parameters of the semifield. 
Here we only give the correspondence theorem in the general setting,
where no assumptions on the nuclei or other properties of the semifield are made.

Let ${\mathbb{S}}$ be an $n$-dimensional semifield over $\F_q$, and denote the
dimensions of ${\mathbb{S}}$ over its left nucleus by $l$.
We define the
following subspaces of $ {\mathbb{S}}\times  {\mathbb{S}}$. For each
$x \in \mathbb S$, consider the set of vectors $S_x:=\{(y,y^{R_x}):
y \in {\mathbb{S}}\}$, and put $S_\infty:=\{(0,y): y \in
{\mathbb{S}}\}$. Then ${\mathcal
S}:=\{S_x: x \in {\mathbb{S}}\} \cup \{S_\infty\}$ is a spread of
${\mathbb{S}}\times  {\mathbb{S}}$. The set of endomorphisms
${\mathcal{R}}:=\{R_x~:~ x \in {\mathbb{S}}\}\subset {\mathrm{End}}(V_l({\mathbb{S}}))$
is called the {\it semifield spread set} \index{semifield spread set}
corresponding to ${\mathbb{S}}$.
Note  that by (S2) the spread set ${\mathcal{R}}$ is closed under addition and, by (S3), the
non-zero elements of ${\mathcal{R}}$ are invertible.

This means that $n$-dimensional semifields over
${\mathbb F}_q$, can be investigated via the
$\F_q$-vector space $U\subset \F_q^{ln}$ of dimension $n$ induced by the $\F_q$-vector space
${\mathcal{R}}\subset {\mathrm{End}}(V_l({\mathbb{S}}))$. 
Projectively this corresponds to the study of the $\F_q$-linear
set $L({\mathbb{S}}):=B(U)$ of rank $n$ in $\PG(l^2-1,q^{n/l})=\PG(V_l({\mathbb{S}}))$.
This leads us to the general correspondence theorem, which allows us to use
the geometric properties of linear sets in relation to the Segre variety, to solve isotopism
problems for finite semifields.

\begin{theorem}[\cite{Lavrauw2011}]
Let ${\mathcal{S}}_{l,l}(q^{n/l})$ denote the Segre variety in $\PG(l^2-1,q^{n/l})$, 
and denote its $(l-2)$nd secant variety by $\Omega$.
Let ${\mathcal{G}}$ denote the stabiliser inside the collineation group ${\mathrm{P\Gamma L}}(l^2,q^{n/l})$ of the two families of maximal subspaces on ${\mathcal{S}}_{l,l}(q^{n/l})$, and let  $X$ denote the set of linear sets of rank $n$ disjoint from $\Omega$.
Then the isotopism classes of semifields of order $q^n$, $l$-dimensional over their left nucleus, 
are in one-to-one correspondence with the orbits of ${\mathcal{G}}$ on the set X.
\end{theorem}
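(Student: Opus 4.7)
The strategy would be to turn the spread-set construction sketched immediately above the statement into a full bijection, and to identify the isotopism relation with the $\mathcal{G}$-action. First I would start with a semifield $\Se$ of order $q^n$ having $l = \dim_{\mathbb{N}_l(\Se)} \Se$, form the spread set $\R = \{R_x : x \in \Se\} \subset \mathrm{End}_{\F_{q^{n/l}}}(V_l(\Se)) \cong M_l(\F_{q^{n/l}})$, and observe that by (S2) and (S3) it is an $\F_q$-subspace of dimension $n$ whose nonzero elements are invertible. Since $\Omega$, being the $(l-2)$nd secant variety of $\mathcal{S}_{l,l}(q^{n/l})$, is exactly the variety of rank-deficient matrices in $\PG(l^2-1, q^{n/l})$, the associated $\F_q$-linear set $L(\Se) = L_U$ of rank $n$ lies in $X$.

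Next I would check well-definedness on isotopism classes. If $(F, G, H): \Se \to \Se'$ is an isotopism, then rewriting $x^F \circ' y^G = (x\circ y)^H$ with $y$ fixed yields the intertwining $R'_{y^G} = H R_y F^{-1}$, hence $\R' = H \R F^{-1}$. Two-sided multiplication by invertible matrices preserves both the rank stratification and the two families of maximal subspaces on $\mathcal{S}_{l,l}(q^{n/l})$ (corresponding to the two factors of the Segre map), so the induced projective transformation lies in $\mathcal{G}$. For surjectivity I would start with an arbitrary $L = L_U \in X$, pick $A_0 \in U\setminus\{0\}$, and pass to $U' = A_0^{-1}U$, which contains $\mathrm{Id}$; any $\F_q$-linear bijection $\psi: \F_q^n \to U'$ with $\psi(1) = \mathrm{Id}$ defines a multiplication on the additive group $\F_q^n$ by declaring the right-multiplication map by $x$ to be $\psi(x)$ (after identifying $\F_q^n$ with $\F_{q^{n/l}}^l$ via a fixed $\F_q$-basis). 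Axioms (S1)--(S4) follow from the linear-algebraic properties of $U'$, and the linear set of this semifield equals $L$ up to the $\mathcal{G}$-element induced by $A_0$.

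For injectivity, I would suppose $g \in \mathcal{G}$ satisfies $L(\Se_1)^g = L(\Se_2)$, lift $g$ to a pair $(H, F)$ acting by $A \mapsto H A F^{-1}$ (possibly with a Frobenius companion on $\F_{q^{n/l}}$), obtain $\R_2 = H \R_1 F^{-1}$, and then define $G: \Se_1 \to \Se_2$ by the rule $R'_{y^G} = H R_y F^{-1}$, which is well-defined because each $\R_i$ is parametrised bijectively by $\Se_i$. The triple $(F, G, H)$ then satisfies the isotopy identity by construction. The main obstacle, and the structural input on which the whole bijection rests, is precisely this lift: one must show that the stabiliser inside $\PGammaL(l^2, q^{n/l})$ of \emph{both} families of maximal subspaces on $\mathcal{S}_{l,l}(q^{n/l})$ is exactly the group induced by pairs of semilinear bijections of the two $\F_{q^{n/l}}^l$ factors (modulo the joint scalar action), with the system-swap excluded and no sporadic extras. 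This classical description of the Segre stabiliser is what converts the two-sided matrix action into an isotopism and thereby pins down the bijection.
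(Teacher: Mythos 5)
The paper itself contains no proof of this theorem: it is quoted from \cite{Lavrauw2011}, with only the spread-set construction sketched beforehand. Measured against the argument in that reference, your architecture is the right one --- the $(l-2)$nd secant variety of $\mathcal{S}_{l,l}(q^{n/l})$ is the locus of non-invertible matrices, so (S3) becomes disjointness of $L(\Se)$ from $\Omega$, and the description of the stabiliser of the two systems as the group induced by $A\mapsto PA^{\sigma}Q$ is indeed the structural pivot you correctly single out. But two steps you treat as routine are genuine gaps. First, in the well-definedness direction you assert that an isotopism $(F,G,H)$ induces, via $R'_{y^G}=HR_yF^{-1}$, an element of $\mathcal{G}\leq\PGammaL(l^2,q^{n/l})$. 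However $F$ and $H$ are only required to be linear over the ground field of the algebra, which is a proper subfield of $\F_{q^{n/l}}$ whenever $l<n$; a priori $A\mapsto HAF^{-1}$ need not be $\F_{q^{n/l}}$-semilinear and so need not act on $\PG(l^2-1,q^{n/l})$ at all. One must first show that an isotopism between semifields that are $l$-dimensional over their left nuclei can be replaced by one in which $F$ and $H$ are $\F_{q^{n/l}}$-semilinear (this is where the left-nucleus hypothesis does real work, e.g.\ starting from $HF^{-1}=HR_1F^{-1}\in\R'$ and analysing how $H$ conjugates $N_l$-linear maps); this is the harder half of the proof and your sketch does not acknowledge it.

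Second, in the injectivity direction you pass from $L(\Se_1)^g=L(\Se_2)$ to $\R_2=H\R_1F^{-1}$. Equality of linear sets as point sets does not give equality of the underlying $\F_q$-subspaces: the present paper stresses exactly this phenomenon (Lemma \ref{Desar} and Theorem \ref{raar} exhibit distinct subspaces $\pi\neq\pi'$ with $\B(\pi)=\B(\pi')$). After applying a lift $\tilde{g}$ of $g$ you only know that the two semifield spread sets $\tilde{g}(\R_1)$ and $\R_2$ define the same linear set, and you still owe an argument that two semifield spread sets with the same associated linear set disjoint from $\Omega$ yield isotopic semifields. Two smaller points: choosing ``any $\F_q$-linear bijection $\psi$ with $\psi(1)=\mathrm{Id}$'' only guarantees a right identity, so to secure (S4) you should take $\psi$ to be the inverse of the evaluation map $A\mapsto A(e)$ at a fixed nonzero $e$ (injective on $U'$ since $U'\setminus\{0\}$ consists of invertible maps); and you should verify that the constructed semifield has left nucleus exactly $\F_{q^{n/l}}$, or explain how the correspondence treats semifields whose left nucleus is larger.
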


More details on this approach, the treatment of different special cases and several other links with 
finite geometry can be found in \cite{MaPoTr2007}, \cite{semifields}, \cite{Lavrauw2013}, \cite{italianen}.
The recent paper \cite{LaMaPoTr2013} is a nice illustration of how the study of linear sets of pseudoregulus type
associated to certain semifields can be used to solve isotopism problems for these semifields.

\end{document}